\newcommand{\N}{\mathbb{N}}
\newcommand{\Z}{\mathbb{Z}}
\newcommand{\R}{\mathbb{R}}
\newcommand{\F}{\mathbb{F}}
\newcommand{\calO}{\mathcal{O}}
\newcommand{\frakP}{\mathfrak{P}}
\newcommand{\fraka}{\mathfrak{a}}
\newcommand{\frakm}{\mathfrak{m}}
\newcommand{\frako}{\mathfrak{o}}
\newcommand{\frakp}{\mathfrak{p}}
\newcommand{\abs}[1]{{\left|{#1}\right|}}
\newcommand{\ggen}[1]{{\left\langle{#1}\right\rangle}}
\newcommand{\floor}[1]{{\left\lfloor{#1}\right\rfloor}}
\newcommand{\ceil}[1]{{\left\lceil{#1}\right\rceil}}
\newcommand{\Matrix}[1]{{\left(\begin{matrix} #1 \end{matrix}\right)}}
\DeclareMathOperator{\order}{ord}
\DeclareMathOperator{\Pic}{Pic}
\DeclareMathOperator{\lcm}{lcm}
\DeclareMathOperator{\Norm}{Norm}
\newtheorem{theorem}{Theorem}[section]
\newtheorem{corollary}{Corollary}
\newtheorem{lemma}[theorem]{Lemma}
\newtheorem{proposition}{Proposition}
\theoremstyle{definition}
\newenvironment{enuma}{\begin{enumerate}[\upshape (a)]}{\end{enumerate}}
\newenvironment{enumi}{\begin{enumerate}[\upshape (i)]}{\end{enumerate}}
\newenvironment{enum1}{\begin{enumerate}[\upshape (1)]}{\end{enumerate}}
\newcommand{\alginput}[1]{\item \textbf{Input:} #1}
\newcommand{\algoutput}[1]{\item \textbf{Output:} #1}
\newenvironment{algorithm}[1]
{\refstepcounter{theorem}
\setlength{\FrameSep}{5pt}\begin{MakeFramed}{}\begin{list}{}{\setlength{\topsep}{0cm}%
    \setlength{\parsep}{\parskip}\setlength{\leftmargin}{0cm}\setlength{\rightmargin}{1.75cm}}%
    \item \textbf{Algorithm \thetheorem: #1} \hrule}
    {\end{list}\end{MakeFramed}}
\title{Explicit Methods for Radical Function Fields over Finite Fields}
\author{Felix Fontein}
\address{Department of Mathematics \&\ Statistics \\ University of Calgary \\ 2500 University Drive
NW \\ Calgary, Alberta, Canada T2N 1N4}
\curraddr{}
\email{felix.fontein@math.uzh.ch}
\thanks{This work has been supported in part by the Swiss National Science Foundation under grant
no.~107887.}
\subjclass[2000]{Primary: 11Y40; Secondary: 11Y16, 11R58, 11R29}
\keywords{Radical function fields, explicit arithmetic, decomposition of places, formulas for
integral bases, Riemann-Roch spaces, exact constant field computation, genus computation, Euler
product approximation}
\DeclareMathOperator{\sgn}{sgn}
\DeclareMathOperator{\disc}{disc}
\DeclareMathOperator{\Gl}{Gl}
\DeclareMathOperator{\Diff}{Diff}
\DeclareMathOperator{\mymod}{mod}
\DeclareMathOperator{\adj}{adj}
\newcommand{\PP}{\mathbb{P}}
\begin{document}
  \maketitle
  
  \begin{abstract}
    We develop explicit formulas and algorithms for arithmetic in radical function fields~$K/k(x)$
    over finite constant fields. First, we classify which places of $k(x)$ whose local integral
    bases have an easy monogenic form, and give explicit formulas for these bases. Then, for a fixed
    place $\frakp$ of $k(x)$, we give formulas for functions whose valuation is zero for all places
    $\frakP \mid \frakp$ except one, for which it is one. We extend a result by Q.~Wu on a
    $k[x]$-basis of its integral closure in $K$, show how to compute certain Riemann-Roch spaces and
    how to compute the exact constant field, resulting in explicit formulas for the exact constant
    field together with easy to evaluate formulas for the genus of $K$. Finally, we show how to
    approximate the Euler product to obtain the class number using ideas of R.~Scheidler and
    A.~Stein and give an algorithm. We give bounds on the running time for all algorithms.
  \end{abstract}
  
  \section{Introduction}
  
  There exist a lot of very general algorithms to perform explicit arithmetic in global function
  fields; for a good overview, see \cite{diem-habil}. In theory, all arithmetic operations have a
  running time polynomially bounded in $\log q$ and certain other invariants, but in practice, these
  algorithms are often slow compared to more specialized solutions.
  
  For example, if one compares how general methods for arithmetic perform in elliptic function
  fields, it turns out that these methods are extremely slower than working with points on the
  corresponding elliptic curve instead. Besides elliptic function fields, one also has very efficient
  and optimized arithmetic for hyperelliptic function fields \cite{hehcc}. Besides these, there are
  other classes of function fields for which specialized arithmetic exists, for example, cubic
  function fields \cite{mark-arithmeticcubicfields, scheidler-infrastructurepurelycubic}, $C_{ab}$
  function fields and superelliptic function fields \cite{galbraith-paulus-smart}.
  
  In this paper, we will concentrate on radical function fields, i.e. function fields of the form~$K
  = k(x, y)$, where $y$ satisfies an equation of the form $y^n - D$ with $D \in k(x)$ and $n$ is not
  divisible by the characteristic of $k$. Hyperelliptic curves in characteristic~$\neq 2$ are a
  special case of radical function fields, as well as superelliptic function fields: the latter are
  radical function fields with one place at infinity and where $D$ is a squarefree
  polynomial. Hence, our methods extend results for these special cases.
  
  In the case of radical function fields over finite constant fields, not much work has been done in
  the direction of explicit arithmetic. One notable exception is a result by Q.~Wu, which gives an
  explicit $k[x]$-basis of the integral closure~$\calO$ of $k[x]$ in $K$
  \cite{wu-radicalffsintbases} under the assumption that $D \in k[x]$ is $n$-th power free. We will
  reformulate his result in Section~\ref{intbasespart2} to work for all $D \in k(x)$.
  
  To do explicit arithmetic in $K$, we present algorithms which compute local as well as global
  integral bases. The goal is that these bases are as explicit and simple as possible. For most
  places~$\frakp$ of $k(x)$, one can give a very simple monogenic basis of the integral
  closure~$\calO_\frakp'$ of $\frako_\frakp$ in $K$, i.e. one give an element of the form $\rho =
  y^i/\pi^j$ such that $\calO_\frakp' = \frako_\frakp[\rho]$; here, $\pi$ is a uniformizer for
  $\frakp$. We use this to give an easy algorithm for computing all places~$\frakP$ of $K$ lying
  above a place $\frakp$ of $k(x)$. Moreover, we find elements $f_\frakP \in K^*$ for $\frakP \mid
  \frakp$ of a simple form such that $\nu_\frakP(f_\frakP) = 1$ and $\nu_{\frakP'}(f_\frakP) = 0$
  for $\frakP' \neq \frakP$, $\frakP' \mid \frakp$.
  
  These methods allow to use the algorithm of F.~He\ss\ to compute Riemann-Roch spaces
  \cite{hessRR}, and our methods allow to give an explicit bound on the running time of the
  algorithm assuming that the divisor is given in form of a $k[x]$-basis of a fractional
  $\calO$-ideal together with integers for the infinite valuations.
  
  We then use the theory for Riemann-Roch space computations to compute the exact constant field of
  $K/k(x)$ as the Riemann-Roch space of the zero divisor. This results in an explicit criterion when
  $k$ is the exact field of constants, and furthermore we give an explicit $k$-basis of the exact
  constant field~$k'$ and an explicit and easy to evaluate formula for the degree~$[k' : k]$. This,
  in turn, allows us to give an explicit formula for the genus of $K/k(x)$.
  
  Finally, we apply the Euler product approximation of R.~Scheidler and A.~Stein
  \cite{scheidlerstein-approxeulerprods} to radical function fields. We reformulate their
  approximation of the class number to make it better suited for numerical evaluation, and provide
  explicit algorithms to compute the approximation. Our discussion includes a bound on the running
  time in binary operations.
  
  All algorithms in this paper, except the Euler product approximation in
  Section~\ref{eulerprodapprox}, have been implemented by the author in C++, and are used to do
  explicit arithmetic in the divisor class group of a radical function field using infrastructure
  methods (see \cite{ff-tioagfoaur}).
  
  \subsection{Notation}
  \label{notation}
  % CHECK THE IRREDUCIBILITY CRITERION !!! ??? ...
  Let $k$ be a field and $n \in \N$, $n > 1$ coprime to the characteristic of $k$. Let $D \in
  k(x)^*$ such that $Y^n - D \in k(x)[Y]$ is irreducible; then $D \neq \alpha^t$ for all $\alpha \in
  k(x)$ and all divisors~$t$ of $n$, $t > 1$. Let $K = k(x, y)$, where $y$ is a root of $Y^n - D$.
  
  Write $D = \sgn(D) \cdot \prod_{i=1}^\infty \frac{f_i^i}{f_{-i}^i}$ with $\sgn(D) \in k^*$ and
  $\dots, f_{-2}, f_{-1}, f_1, f_2, \dots$ a sequence of pairwise coprime squarefree monic
  polynomials, almost all of them being 1. For convenience, define $f_0 := 1$. Note that the
  condition $D \neq \alpha^t$ for all $\alpha \in k(x)$ is equivalent to $\sgn(D)$ not being a
  $t$-th power or $f_i \neq 1$ for some $i \in \Z$ with $t \nmid i$. Moreover, note that checking
  whether an element is a $t$-th power in $\F_q$ can be effectively done; see
  Corollary~\ref{rootcheckcorollary}.
  
  We denote the set of places of a function field~$K'$ by $\PP_{K'}$. For $\frakp \in \PP_{K'}$, let
  $\nu_\frakp : K' \to \Z \cup \{ \infty \}$ be the surjective valuation of $\frakp$. If $K''/K'$ is
  an extension of function fields and $\frakp \in \PP_{K'}$, $\frakP \in \PP_{K''}$, we write
  $\frakP \mid \frakp$ if, and only if, $\frakP \cap K' = \frakp$.
  
  For a place $\frakp \in \PP_{k(x)}$, let $\frako_\frakp$ be the valuation ring of $\frakp$ with
  maximal ideal $\frakm_\frakp$ and let $\calO_\frakp'$ be the integral closure of $\frako_\frakp$
  in $K$. Moreover, write $\kappa(\frakp) := \calO_\frakp / \frakm_\frakp$ for the residue field of
  $\frakp$.
  
  For a place $\frakP \in \PP_K$, let $\calO_\frakP$ be the valuation ring with maximal ideal
  $\frakm_\frakP$. If $\frakp = \frakP \cap k(x)$, let $e(\frakP \mid \frakp) =
  \frac{\nu_\frakP(t)}{\nu_\frakp(t)}$ be the ramification index of $\frakP$ over $\frakp$ (where $t
  \in k(x)^*$ satisfies $\nu_\frakp(t) \neq 0$) and $f(\frakP \mid \frakp)$ the extension degree
  $[\calO_\frakP/\frakm_\frakP : \kappa(\frakp)]$.
  
  Let $\frakp_\infty$ be the infinite place of $k(x)$, i.e. the one whose valuation is given by
  $\nu_{\frakp_\infty}(\frac{f}{g}) = \deg g - \deg f$ for $f, g \in k[x]$, $g \neq 0$; this place
  is the only pole of $x$. We use the abbrevations $\frako_\infty := \frako_{\frakp_\infty}$ and
  $\calO_\infty := \calO_{\frakp_\infty}'$. Moreover, we denote by $\calO$ the integral closure of
  $k[x]$ in $K$. Finally, we call the places of $K$ above $\frakp_\infty$ the \emph{infinite places}
  of $K$; all other places are called \emph{finite places}.
  
  \section{Monogenic Integral Bases and Splitting of Primes}
  \label{monogenicintbasessect}
  
  In this section, we develop a criterion when a place~$\frakp \in \PP_{k(x)}$ with uniformizer~$\pi
  \in k(x)^*$ possesses a monogenic integral basis of $\calO_\frakp'$ of the form $y^i \pi^j$ with
  $i, j \in \Z$ in Proposition~\ref{monogenicintbases}. Moreover, we show how a local integral basis
  can be constructed in any case. Then, we show how to find elements in $K^*$ which have valuation 1
  for one place $\frakP \mid \frakp$ and valuation 0 for all other places lying above $\frakp$ in
  Proposition~\ref{placedecomposition}.
  
  We begin with a result on monogenic integral bases and the places of $K$ lying above a place of
  $k(x)$.
  
  \begin{proposition}
    \label{monogenicintbases}
    Let $\frakp \in \PP_{k(x)}$ and $d := \gcd(n, \nu_\frakp(D))$. Then there exists an element~$z
    \in K$ such that $\calO_\frakp' = \frako_\frakp[z]$ and $z^n \in k(x)$ if, and only if, $d \in
    \{ 1, n \}$.
    
    To be more precisely, let $\pi \in \frako_\frakp$ be a uniformizer for $\frakp$. Then we have:
    \begin{enuma}
      \item If $d = 1$, let $a, b \in \Z$ with $a \nu_\frakp(D) + b n = 1$. Then $z := y^a \pi^b$
      satisfies $\calO_\frakp' = \frako_\frakp[z]$. The minimal polynomial of $z$ over $k(x)$ is
      $Z^n - D^a \pi^{n b}$.
      
      Moreover, $\frakp$ totally ramifies in $K$, i.e. there is exactly one place~$\frakP \in \PP_K$
      lying above $\frakp$ and $e(\frakP | \frakp) = n$, $f(\frakP | \frakp) = 1$. Finally,
      $\nu_\frakP(z) = 1$.
      \item If $d = n$, let $b \in \Z$ with $n b = -\nu_\frakp(D)$. Then $z := y \pi^b$ satisfies
      $\calO_\frakp' = \frako_\frakp[z]$. The minimal polynomial of $z$ over $k(x)$ is $Z^n - D
      \pi^{-\nu_\frakp(D)}$.
      
      Moreover, $\frakp$ is unramified in $K$, i.e. all places $\frakP \in \PP_K$ lying over
      $\frakp$ satisfy $e(\frakP | \frakp) = 1$. The degrees of the places are determined by the
      factorization of $Z^n - \alpha$ over $\kappa(\frakp) = \frako_\frakp / \frakm_\frakp$, where
      $\alpha = D \pi^{b n} + \frakm_\frakp \neq 0$. Finally, $\nu_\frakP(z) = 0$ for all $\frakP$
      lying above $\frakp$.
      \item If $1 < d < n$, the ramification indices of the places $\frakP \in \PP_K$ lying above
      $\frakp$ are given by $\frac{n}{d}$.
    \end{enuma}
    In any case, an integral basis of $\calO_\frakp'$ is given by \[ \left( \pi^{-\floor{i
    \frac{\nu_\frakp(D)}{n}}} y^i \right)_{i=0,\dots,n-1}. \]
  \end{proposition}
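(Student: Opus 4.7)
My plan is to treat the two monogenic cases $d \in \{1,n\}$ directly via explicit generators, reduce $1 < d < n$ to them by a tower argument, and then establish the integral basis formula uniformly via a discriminant computation. For $d = n$, set $z := y\pi^b$ with $nb = -\nu_\frakp(D)$, so that $z^n = D\pi^{-\nu_\frakp(D)} \in \frako_\frakp^*$; the discriminant of the minimal polynomial $Z^n - z^n$ is (up to sign) $n^n (z^n)^{n-1}$, a unit since $n$ is coprime to the characteristic of $k$, so $\frako_\frakp[z]$ is \'etale over $\frako_\frakp$ and hence already integrally closed, giving $\frako_\frakp[z] = \calO_\frakp'$; the splitting of $\frakp$ is then governed by the Hensel factorization of $Z^n - z^n \bmod \frakm_\frakp$. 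For $d = 1$, B\'ezout produces $a, b \in \Z$ with $a\nu_\frakp(D) + bn = 1$, whence $\gcd(a,n) = 1$ and $z := y^a\pi^b$ generates $K/k(x)$; since $\nu_\frakp(z^n) = 1$, the minimal polynomial $Z^n - D^a\pi^{bn}$ is Eisenstein at $\frakp$, yielding all the claims by standard theory.

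The intermediate case $1 < d < n$ I would handle via the tower $k(x) \subset L \subset K$ with $L := k(x)(w)$ and $w := y^{n/d}$, so that $w^d = D$ and $[L:k(x)] = d$ (irreducibility of $Y^d - D$ follows from that of $Y^n - D$). Because $\gcd(d, \nu_\frakp(D)) = d$, the case $d=n$ applied to $L/k(x)$ shows $\frakp$ is unramified in $L$, with $\nu_{\frakP'}(w) = \nu_\frakp(D)/d$ for every $\frakP' \mid \frakp$ in $L$. Since $\gcd(n/d, \nu_\frakp(D)/d) = 1$, the case $d=1$ applied to $K/L$ at $\frakP'$ yields total ramification of degree $n/d$. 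Multiplying ramification indices gives $e(\frakP \mid \frakp) = n/d$ for every $\frakP \mid \frakp$, and the relation $\sum_\frakP e f = n$ then forces $\sum_\frakP f(\frakP \mid \frakp) = d$.

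For the integral basis, set $m := \nu_\frakp(D)$ and $e_i := \pi^{-\floor{im/n}} y^i$. Integrality is immediate from $\nu_\frakp(e_i^n) = im - n\floor{im/n} \geq 0$. The transition matrix from $(1, y, \ldots, y^{n-1})$ to $(e_0, \ldots, e_{n-1})$ is diagonal, so using the identity $\sum_{i=0}^{n-1}(im \bmod n) = n(n-d)/2$ (which follows from the equidistribution of $im \bmod n$ across the multiples of $d$ in $\{0, \ldots, n-1\}$, each hit exactly $d$ times) one computes $\nu_\frakp(\disc(e_0, \ldots, e_{n-1})) = n - d$. On the other hand, tame ramification together with $e = n/d$ and $\sum f = d$ gives $\nu_\frakp(\disc(\calO_\frakp'/\frako_\frakp)) = \sum_\frakP f(e-1) = n - d$. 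Since $N := \sum_i \frako_\frakp e_i \subseteq \calO_\frakp'$ has matching discriminant, $N = \calO_\frakp'$.

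For the ``only if'' direction, any $z$ with $\calO_\frakp' = \frako_\frakp[z]$ and $z^n \in k(x)$ has minimal polynomial $Z^n - z^n$, whose discriminant has $\frakp$-valuation $(n-1)\nu_\frakp(z^n)$; requiring this to equal $n - d$ forces $\nu_\frakp(z^n) = (n-d)/(n-1)$, which is not an integer unless $d \in \{1, n\}$. The main obstacle I anticipate is the careful bookkeeping in the tower argument for $1 < d < n$: one must verify that $L/k(x)$ and $K/L$ genuinely fall under the already-established cases (so that no circularity arises in deducing $e = n/d$), and that the irreducibility hypothesis propagates correctly to $Y^d - D$ and $Y^{n/d} - w$. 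The equidistribution identity underlying the discriminant comparison is elementary but load-bearing, since the entire equality $N = \calO_\frakp'$ hinges on the exact value $n - d$.
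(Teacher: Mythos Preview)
Your proposal is correct and reaches the same conclusions as the paper, but several of the key steps proceed along different lines. The paper imports the ramification data $e(\frakP\mid\frakp)=n/d$ and $d(\frakP\mid\frakp)=n/d-1$ directly from Stichtenoth's Proposition~III.7.3, and then verifies monogenicity in cases (a) and (b) via the different criterion (Stichtenoth~III.5.10: $\nu_\frakP(\varphi'(z))=d(\frakP\mid\frakp)$ forces $\calO_\frakp'=\frako_\frakp[z]$). You instead recover the ramification for $1<d<n$ by the tower $k(x)\subset k(x)(y^{n/d})\subset K$ and handle (a), (b) via Eisenstein and \'etaleness respectively; this is more self-contained and avoids the external citations, at the cost of a little extra bookkeeping you rightly flag. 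For the nonexistence of a radical monogenic generator when $1<d<n$, the paper argues by Kummer's theorem (the reduction $Z^n-\overline{\alpha}$ is either $Z^n$ or squarefree, forcing $e\in\{n,1\}$), whereas your discriminant-divisibility argument $(n-1)\mid(n-d)$ is a clean alternative that bypasses Kummer entirely. The integral-basis portion is essentially the same in both: a discriminant comparison resting on the identity $m(n-1)-2\sum_{i=0}^{n-1}\lfloor im/n\rfloor=n-\gcd(n,m)$, which the paper attributes to Wu and which you derive via the equivalent form $\sum_{i=0}^{n-1}(im\bmod n)=n(n-d)/2$.
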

  
  The results on the ramification and splitting are well known; see, for example, \cite[p.~111f,
  Proposition~III.7.3]{stichtenoth}.
  
  \begin{proof}
    By \cite[p.~111, Proposition~III.7.3~(b)]{stichtenoth}, $e(\frakP | \frakp) = \frac{n}{d}$ and
    $d(\frakP | \frakp) = \frac{n}{d} - 1$ for all places $\frakP$ lying above $\frakp$. This shows
    that $\nu_\frakP(y) = \frac{1}{n} \nu_\frakP(y^n) = \frac{1}{n} \nu_\frakP(D) = \frac{1}{n}
    e(\frakP | \frakp) \nu_\frakp(D) = \frac{1}{d} \nu_\frakp(D)$ for any place $\frakP$ lying above
    $\frakp$.
    
    Now, let us consider the three cases.
    \begin{enuma}
      \item Let $d = 1$; then $e(\frakP \mid \frakp) = n$, whence $\frakp$ totally ramifies in
      $K$. Let $a, b \in \Z$ with $a \nu_\frakp(D) + b n = 1$ and set $z := y^a \pi^b$. Clearly,
      $k(x)(z) = k(x)(y)$ as $a$ is coprime to $n$, whence the minimal polynomial has
      degree~$n$. Now $z^n = (y^n)^a \pi^{b n} = D^a \pi^{b n} \in k(x)$, whence $\varphi := Z^n -
      D^a \pi^{b n}$ is the minimal polynomial.
      
      Thus, we have $\nu_\frakP(z) = a \nu_\frakP(y) + b \nu_\frakP(\pi) = a \nu_\frakp(D) + b
      e(\frakP | \frakp) = 1$. Now $\varphi'(z) = n z^{n - 1}$, whence $\nu_\frakP(\varphi'(z)) = n
      - 1 = d(\frakP | \frakp)$. Therefore, by \cite[p.~96, Theorem~III.5.10]{stichtenoth},
      $\calO_\frakp' = \frako_\frakp[z]$.
      \item Let $d = n$. If $b = -\frac{\nu_\frakp(D)}{n}$ and $z := y \pi^b$, then $\nu_\frakP(z) =
      \nu_\frakP(y) - \frac{\nu_\frakp(D)}{n} \nu_\frakP(\pi) = \frac{1}{d} \nu_\frakp(D) -
      \frac{1}{d} \nu_\frakp(D) = 0$. Moreover, clearly $k(x)(y) = k(x)(z)$ as $\frac{z}{y} \in
      k(x)$. Hence, the minimal polynomial is given by $\varphi := Z^n - z^n = Z^n - D \pi^{b n} =
      Z^n - D \pi^{-\nu_\frakp(D)}$. Again, $\varphi'(z) = n z^{n - 1}$, whence
      $\nu_\frakP(\varphi'(z)) = 0 = e(\frakP | \frakp) - 1$. Therefore, by \cite[p.~96,
      Theorem~III.5.10]{stichtenoth}, $\calO_\frakp' = \frako_\frakp[z]$.
      
      By Kummer's Theorem \cite[p.~76, Theorem~III.3.7]{stichtenoth}, the factorization of $Z^n - (D
      \pi^{-\nu_\frakp(D)} + \frakm_\frakp)$ over $\kappa(\frakp)[Z]$ gives the places of $K$ lying
      above $\frakp$.
      \item Let $1 < d < n$. Assume that a $z$ exists with minimal polynomial $Z^n - \alpha$ such
      that $\calO_\frakp' = \frako_\frakp[z]$. Let $P : \frako_\frakp \to \kappa(\frakp)$ the
      projection. Then, by Kummer's Theorem \cite[p.~76, Theorem~III.3.7]{stichtenoth}, the
      factorization of $Z^n - P(\alpha) \in \kappa(\frakp)[Z]$ determines the ramification indices
      and relative degrees of the places of $K$ lying above $\frakp$.
      
      If $P(\alpha) = 0$, the polynomial factors as $Z^n$, whence $\frakp$ totally ramifies in $K$:
      but then $n = e(\frakP | \frakp) = \frac{n}{d}$, whence $d = 1$, a contradiction.
      
      In case $P(\alpha) \neq 0$, the polynomial $Z^n - P(\alpha)$ is squarefree as $n$ is coprime
      to the characteristic of $\kappa(\frakp)$. Thus, $1 = e(\frakP | \frakp) = \frac{n}{d}$ for
      all place $\frakP$ lying above $\frakp$: therefore, $d = n$, a contradiction.
      
      Thus, if $1 < d < n$, such a $z$ cannot exist.
    \end{enuma}
    
    Finally, we want to show that $\calO_\frakp' = \bigoplus_{i=0}^{n-1} \frako_\frakp z_i$, where
    $z_i := \pi^{-\floor{i \frac{\nu_\frakp(D)}{n}}} y^i$. For that, we use a similar argument chain
    as in \cite[Section~3]{wu-radicalffsintbases}, which simplifies a lot in this special case. Let
    $\frakP$ be a place lying above $\frakp$. First,
    \begin{align*}
      \nu_\frakP\left( z_i \right) ={} & \frac{1}{n}
      \nu_\frakP\left( \pi^{-\floor{i \frac{\nu_\frakp(D)}{n}} n} y^{i n} \right) \\
      {}={} & \frac{1}{d} \left( -\floor{\frac{i \nu_\frakp(D)}{n}} n + i \nu_\frakp(D) \right).
    \end{align*}
    Now $\floor{\frac{i \nu_\frakp(D)}{n}} n = i \nu_\frakp(D) - ((i \nu_\frakp(D)) \mod n)$, whence
    $\nu_\frakP(z_i) = \frac{((i \nu_\frakp(D)) \mod n)}{d} \ge 0$. Therefore, $z_i \in
    \calO_\frakp'$. Now \[ \disc(z_0, \dots, z_n) = \disc(1, y, \dots, y^{n-1}) \cdot
    \prod_{i=0}^{n-1} \pi^{-2 \floor{i \frac{\nu_\frakp(D)}{n}}}, \] whence \[ \nu_\frakp(\disc(z_0,
    \dots, z_n)) = \nu_\frakp(\disc(1, y, \dots, y^{n-1})) - 2 \sum_{i=0}^{n-1} \floor{i
    \frac{\nu_\frakp(D)}{n}}. \] Now \[ \nu_\frakp(\disc(\calO_\frakp')) = \sum_{\frakP \mid \frakp}
    d(\frakP | \frakp) f(\frakP | \frakp) = n - \frac{d}{n} n = n - d, \] whence it suffices to show
    that $\nu_\frakp(\disc(z_0, \dots, z_n)) = n - d$. First, note that \[ \nu_\frakp(\disc(1, y,
    \dots, y^{n-1})) = \nu_\frakp(D) (n - 1), \] whence we have to show that \[ \nu_\frakp(D) (n -
    1) - 2 \sum_{i=0}^{n-1} \floor{i \frac{\nu_\frakp(D)}{n}} = n - \gcd(n, \nu_\frakp(D)). \]
    Define $s := \nu_\frakp(D)$, then this simplifies to $s (n - 1) - 2 \sum_{i=0}^{n-1} \floor{
    \frac{i s}{n}} = n - \gcd(n, s)$. But this is actually shown in
    \cite[Proposition~3.1]{wu-radicalffsintbases}.
  \end{proof}
  
  We now want to construct elements $f_\frakP$, for $\frakP \mid \frakp$, which satisfy
  \begin{equation}
    \tag{$\ast$}
    \nu_\frakP(f_\frakP) = 1 \qquad \text{and} \qquad \forall \frakP' \mid \frakp, \frakP' \neq
    \frakP : \nu_{\frakP'}(f_\frakP) = 0.
  \end{equation}
  The ring $\calO_\frakp'$ is a principal ideal domain whose non-zero prime ideals correspond to the
  places~$\frakP \mid \frakp$, where $\frakP$ corresponds to the prime ideal $\frakm_\frakP \cap
  \calO_\frakp'$. Any generator of $\frakm_\frakP \cap \calO_\frakp'$ satisfies ($\ast$), and vice
  versa, any element~$f_\frakP$ satisfying ($\ast$) is a generator for $\frakm_\frakP \cap
  \calO_\frakp'$. Hence, these elements~$f_\frakP$ allow us to describe the non-zero prime
  ideals~$\frakm_\frakP \cap \calO_\frakp'$ in an elegant way.
  
  This will be used in Sections~\ref{matrixrepueip} and \ref{comprr} to directly write down an
  $\frako_\infty$-basis for the $\calO_\infty'$-ideal~$\fraka$ with $\nu_\frakP(\fraka) = t_\frakP$,
  when integers $t_\frakP \in \Z$, $\frakP \mid \frakp$ are given.
  
  \begin{proposition}
    \label{placedecomposition}
    Let $\frakp \in \PP_{k(x)}$ and $d := \gcd(n, \nu_\frakp(D))$.
    \begin{enuma}
      \item If $d = 1$, let $a, b \in \Z$ with $a \nu_\frakp(D) + b n = 1$. Then $z := y^a \pi^b$
      satisfies $\nu_\frakP(z) = 1$ for the only place~$\frakP \in \PP_K$ lying above $\frakp$.
      \item If $d > 1$, let $a, b \in \Z$ such that $a \frac{n}{d} + b \frac{\nu_\frakp(D)}{d} =
      1$. Let $\pi$ be a uniformizer for $\frakp$ and set $\pi' := \pi^a y^b$. If $d = n$, we can
      choose $\pi' = \pi$.
      
      Let $P : \frako_\frakp \to \kappa(\frakp)$ be the projection and let $\alpha := P(D
      \pi^{-\nu_\frakp(D)})$. Let the factorization of $Z^d - \alpha$ over $\kappa(\frakp)$ be
      $\prod_{i=1}^t g_i$ with pairwise distinct monic prime polynomials~$g_i \in
      \kappa(\frakp)[Z]$. Let $\hat{g}_i \in \frako_\frakp[Z]$ be monic polynomials with
      $P(\hat{g}_i) = g_i$, $1 \le i \le t$; in case $\deg \frakp = 1$, we can choose $\hat{g}_i =
      g_i$.
      
      If $\fraka_i := \pi' \calO_\frakp' + g_i(y \pi^{-\nu_\frakp(D) / d}) \calO_\frakp'$, then
      $\fraka_1, \dots, \fraka_t$ are exactly the non-zero prime ideals of $\calO_\frakp'$. Let
      $\frakP_i \in \PP_K$ be the place corresponding to $\fraka_i$, i.e. $\frakm_{\frakP_i} \cap
      \calO_\frakp' = \fraka_i$.
      
      \begin{enumi}
        \item If $d = n$:
        
        For $z_i^{(1)} := g_i(y \pi^{-\nu_\frakp(D) / d})$ and $z_i^{(2)} := z_i^{(1)} + \pi$, we
        have $\nu_{\frakP_j}(z_i^{(s)}) = 0$ for $j \neq i$, $s = 1, 2$ (or $z_i^{(1)} = 0$), and we
        have $\nu_{\frakP_i}(z_i^{(s)}) > 0$ for both $s$, and $\nu_{\frakP_i}(z_i^{(s)}) = 1$ for
        at least one $s$.
        
        Moreover, $\nu_{\frakP_i}(z_i^{(1)}) > 1$ is the case if, and only if, $\pi \not\in
        z_i^{(1)} \calO_\frakp'$. This is the case if, and only if,
        $\nu_\frakp(\Norm_{K/k(x)}(z_i^{(1)})) > 1$.
        \item If $d < n$:
        
        For $z_i := g_i(y \pi^{-\nu_\frakp(D) / d}) + \pi'$, we have $\nu_{\frakP_j}(z_i) = 0$ for
        $j \neq i$ and $\nu_{\frakP_i}(z_i) = 1$. In particular, $\fraka_i = z_i \calO_\frakp'$.
      \end{enumi}
    \end{enuma}
  \end{proposition}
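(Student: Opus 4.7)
The plan is to build on Proposition~\ref{monogenicintbases} and decompose the residue ring $\calO_\frakp'/\pi'\calO_\frakp'$ via the Chinese Remainder Theorem. Case~(a), $d = 1$, is immediate: the element $z = y^a\pi^b$ of Proposition~\ref{monogenicintbases}(a) already satisfies $\nu_\frakP(z) = 1$ at the unique $\frakP \mid \frakp$. For case~(b), I would first check that $\pi' := \pi^a y^b$ has $\nu_\frakP(\pi') = 1$ for every $\frakP \mid \frakp$: combining $\nu_\frakP(\pi) = e(\frakP \mid \frakp) = n/d$ and $\nu_\frakP(y) = \nu_\frakp(D)/d$ (from the proof of Proposition~\ref{monogenicintbases}) with the Bezout relation $a(n/d) + b(\nu_\frakp(D)/d) = 1$ yields the claim. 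Consequently $\pi'\calO_\frakp' = \prod_{\frakP \mid \frakp}(\frakm_\frakP \cap \calO_\frakp')$.

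Next I would introduce the auxiliary element $u := y^{n/d}\pi^{-\nu_\frakp(D)/d}$ (which reduces to $y\pi^{-\nu_\frakp(D)/d}$ when $n/d = 1$, i.e.\ in case~(b)(i)); the same valuation formulas give $\nu_\frakP(u) = 0$ for all $\frakP \mid \frakp$, so $u \in \calO_\frakp'$, and $u^d = D\pi^{-\nu_\frakp(D)}$ is a $\frakp$-unit with nonzero residue $\alpha$. The key identification is then
\[
\calO_\frakp'/\pi'\calO_\frakp' \;\cong\; \kappa(\frakp)[Z]/(Z^d - \alpha) \;\cong\; \prod_{i=1}^t \kappa(\frakp)[Z]/(g_i),
\]
obtained by a dimension count: the left hand side has $\kappa(\frakp)$-dimension $\sum_i f(\frakP_i \mid \frakp) = n/(n/d) = d$, while the subalgebra $\kappa(\frakp)[\bar u]$ has dimension at most $d$ since $\bar u$ satisfies the degree-$d$ polynomial $Z^d - \alpha$; matching dimensions forces equality. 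The second isomorphism is the Chinese Remainder Theorem, applicable because $Z^d - \alpha$ is squarefree ($d$ is coprime to the characteristic of $k$ and $\alpha \neq 0$). Pulling back the maximal ideals then recovers $\fraka_i = \pi'\calO_\frakp' + g_i(u)\calO_\frakp'$ as the prime ideals of $\calO_\frakp'$.

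It remains to verify the valuation claims for the $z_i$. From the CRT identification, $g_i(u) \in \fraka_i$ and is a unit in $\calO_\frakp'/\fraka_j$ for $j \neq i$, giving $\nu_{\frakP_i}(g_i(u)) \geq 1$ and $\nu_{\frakP_j}(g_i(u)) = 0$ for $j \neq i$. In case~(b)(i) the element $\pi' = \pi$ is itself a uniformizer at each $\frakP_i$: if $\nu_{\frakP_i}(g_i(u)) = 1$ then $z_i^{(1)}$ works; if $\nu_{\frakP_i}(g_i(u)) \geq 2$, then $z_i^{(2)} = g_i(u) + \pi$ has $\frakP_i$-valuation equal to $\nu_{\frakP_i}(\pi) = 1$ and still $0$ at other $\frakP_j$. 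The ideal-theoretic characterization follows from the principal decomposition $g_i(u)\calO_\frakp' = \fraka_i^{\nu_{\frakP_i}(g_i(u))}$ together with $\pi\calO_\frakp' = \prod_j \fraka_j$, and the norm criterion from the general relation $\nu_\frakp(\Norm_{K/k(x)}(\xi)) = \sum_j f(\frakP_j \mid \frakp)\,\nu_{\frakP_j}(\xi)$. The main obstacle is case~(b)(ii), $d < n$, where \emph{both} summands of $z_i = g_i(u) + \pi'$ lie in $\fraka_i$ and could a priori both have valuation $1$, risking a cancellation that would push $\nu_{\frakP_i}(z_i)$ above $1$. To force $\nu_{\frakP_i}(g_i(u)) \geq 2$, I would pass to the intermediate field $L := k(x)(u)$, of degree $d$ over $k(x)$ (since $Y^d - D$ is irreducible by Capelli's criterion, as a consequence of the irreducibility of $Y^n - D$), in which $\frakq_i := \frakP_i \cap L$ is unramified over $\frakp$ because $Z^d - \alpha$ is separable mod $\frakm_\frakp$. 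Then $e(\frakP_i \mid \frakq_i) = (n/d)/1 = n/d \geq 2$, and since $g_i(u) \in L$ one obtains $\nu_{\frakP_i}(g_i(u)) = (n/d)\,\nu_{\frakq_i}(g_i(u)) \geq n/d \geq 2 > 1 = \nu_{\frakP_i}(\pi')$. Hence $\nu_{\frakP_i}(z_i) = 1$ and $\fraka_i = z_i\calO_\frakp'$. This promotion of the valuation through the intermediate field $L$ is the decisive non-obvious step.
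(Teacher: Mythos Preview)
Your approach is essentially the same as the paper's: both hinge on the intermediate field $L = k(x)(y^{n/d})$ being unramified over $\frakp$ while $K/L$ is totally ramified, and this is precisely what forces $\nu_{\frakP_i}(g_i(u)) \ge n/d$ in case~(b)(ii). However, your dimension-count argument for the identification $\calO_\frakp'/\pi'\calO_\frakp' \cong \kappa(\frakp)[Z]/(Z^d-\alpha)$ has a gap. Knowing that both sides have $\kappa(\frakp)$-dimension $d$ and that $\bar u$ satisfies $Z^d - \alpha$ does not show that the map $Z \mapsto \bar u$ is an isomorphism: a priori the minimal polynomial of $\bar u$ in the quotient could be a proper factor of $Z^d - \alpha$, so that $\kappa(\frakp)[\bar u]$ is a proper subalgebra. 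In that scenario your $\fraka_i$ need not be prime---some could collapse to $\pi'\calO_\frakp'$ and others to the unit ideal---so the subsequent ``pulling back the maximal ideals'' step would fail.

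The paper sidesteps this by introducing the intermediate field $K' = k(x)(y^{n/d})$ from the outset: since $\calO_\frakp' \cap K' = \frako_\frakp[u]$ by Proposition~\ref{monogenicintbases}(b), Kummer's theorem applies directly in $K'$ and matches the primes $\hat\frakP_i$ there with the $g_i$; total ramification of $K/K'$ then transports the picture to $K$ with identical residue fields. Your argument is easily repaired the same way---bring in $L$ \emph{before} the CRT step rather than only for case~(b)(ii). Alternatively, one can check surjectivity of $Z \mapsto \bar u$ directly from the explicit integral basis of Proposition~\ref{monogenicintbases}: the basis element $\pi^{-\lfloor i\nu_\frakp(D)/n\rfloor} y^i$ has $\frakP$-valuation $((i\nu_\frakp(D)) \bmod n)/d$, which is strictly positive whenever $(n/d) \nmid i$ (hence such elements lie in $\pi'\calO_\frakp'$), while for $i = j(n/d)$ the basis element is exactly $u^j$.
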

  
  \begin{proof}
    The case~(a) was already shown in the previous proposition. Now, consider~(b). First note that
    $\pi'$ is a uniformizer for every place~$\frakP$ of $K$ lying above $\frakp$: $\nu_\frakP(\pi')
    = a \nu_\frakP(\pi) + b \nu_\frakP(y) = a \frac{n}{d} + b \frac{1}{d} \nu_\frakp(D) = 1$.
    
    Let $\rho := y^{n/d}$ and $K' := k(x)(\rho)$; then the minimal polynomial of $\rho$ over $k(x)$
    is $Y^d - D$, and the minimal polynomial of $y$ over $K'$ is $Y^{n/d} - \rho$. Note that if
    $\hat{\frakP}$ is a place of $K'$ lying above $\frakp$, then $e(\hat{\frakP} | \frakp) = 1$ as
    $d \mid \nu_\frakp(D)$. Thus, $\nu_\frakP(\rho) = \frac{1}{d} \nu_\frakP(D) = \frac{1}{d}
    \nu_\frakp(D)$ is coprime to $\frac{n}{d}$: this shows that the extension $K / K'$ with $K =
    K'(y)$ of degree~$n/d$ is totally ramified in $\hat{\frakP}$. Moreover, the extension $K' /
    k(x)$ with $K' = k(x)(\rho)$ is unramified in $\frakp$. (In case $d = n$, $K' = K$.)
    
    Now $z := \rho \pi^{-\nu_\frakp(D) / d}$ satisfies $\calO_\frakp \cap K' = \frako_\frakp[z]$ by
    part~(b) of the previous proposition. Let $P : \frako_\frakp \to \kappa(\frakp)$ be the
    projection and $\alpha := P(D \pi^{-\nu_\frakp(D)})$; as $\nu_\frakp(D \pi^{-\nu_\frakp(D)}) =
    0$ we have $\alpha \neq 0$. Now $(Z^d - \alpha)' = d Z^{d - 1}$ is coprime to $Z^d - \alpha$,
    whence $Z^d - \alpha$ is squarefree. Let the factorization of $Z^d - \alpha$ over
    $\kappa(\frakp)$ be $\prod_{i=1}^t g_i$ with pairwise distinct monic prime polynomials~$g_i \in
    \kappa(\frakp)[Z]$. Let $\hat{g}_i \in \frako_\frakp[Z]$ be monic polynomials with $P(\hat{g}_i)
    = g_i$, $1 \le i \le t$.
    
    By Kummer's Theorem \cite[p.~76, Theorem~III.3.7]{stichtenoth}, the places $\hat{\frakP} \in
    \PP_{K'}$ lying above $\frakp$ correspond to the $g_i$. Let $\hat{\frakP}_i$ be the place
    corresponding to $g_i$. Now
    \begin{equation}\tag{$\ast$}
      \frakm_{\hat{\frakP}_i} \cap (\calO_\frakp' \cap K') = \pi
      (\calO_\frakp' \cap K') + \hat{g}_i(z) (\calO_\frakp' \cap K').
    \end{equation}
    
    If $\hat{g}_i(z) = 0$, we must have $\deg \hat{g}_i = d$, whence $\hat{\frakP}_i$ is the only
    place lying over $\frakp$. As $K/K'$ is unramified, there is exactly one place of $K$ lying
    above $\frakp$. In this case, we get that $f_i = \pi'$ satisfies the condition. Hence, we assume
    that $\hat{g}_i(z) \neq 0$ for all $i$.
    
    By ($\ast$) we see that $\nu_{\frakP_j}(\hat{g}_i(z)) = 0$ for $j \neq i$ and
    $\nu_{\frakP_i}(\hat{g}_i(z)) > 0$. As $\nu_{\hat{\frakP}_i}(\pi) = 1$, we have
    $\nu_{\frakP_i}(\hat{g}_i(z) + \pi) = 1$ in case $\nu_{\frakP_i}(\hat{g}_i(z)) > 1$. Moreover,
    $\pi \in \hat{g}_i(z) (\calO_\frakp' \cap K')$ if, and only if, $1 = \nu_{\frakP_i}(\pi) \ge
    \nu_{\frakP_i}(\hat{g}_i(z))$. As $\nu_{\frakP_j}(\hat{g}_i(z)) = 0 < \nu_{\frakP_j}(\pi)$, we
    also have $\nu_{\frakP_j}(\hat{g}_i(z) + \pi) = 0$ for $j \neq i$. Finally,
    $\nu_\frakp(\Norm_{K/k(x)}(\hat{g}_i(z)) = \sum_{\frakP \mid \frakp} \nu_\frakP(z) =
    \nu_{\frakP_i}(z)$. This shows (b)~(i), i.e. the case $n = d$.
    
    Now assume $d < n$. In that case, $\nu_{\frakP_i}(\hat{g}_i(z)) = \frac{n}{d}
    \nu_{\hat{\frakP}_i}(\hat{g}_i(z)) \ge \frac{n}{d} > 1$, whence $\nu_{\frakP_i}(\hat{g}_i(z) +
    \pi') = 1$. Moreover, as before, $\nu_{\frakP_j}(\hat{g}_i(z) + \pi') = 0$ for $j \neq
    i$. Finally, note that this implies $(\hat{g}_i(z) + \pi') \calO_\frakp' = \frakm_{\frakP_i}
    \cap \calO_\frakp'$. And as $\pi'$ and $\hat{g}_i(z)$ clearly lie in this ideal, we have
    $\fraka_i = \hat{g}_i(z) \calO_\frakp' + \pi' \calO_\frakp'$.
  \end{proof}
  
  We have seen how to, given a place~$\frakp \in \PP_{k(x)}$,
  \begin{itemize}
    \item decide whether an easy monogenic basis for $\calO'_\frakp$ exists and, if yes, how to
    obtain it;
    \item find an easy to describe integral basis for $\calO'_\frakp$;
    \item find all places~$\frakP \in \PP_K$ lying above $\frakp$; and
    \item find generators of the non-zero prime ideals $\frakm_\frakP \cap \calO'_\frakp$ of
    $\calO'_\frakp$.
  \end{itemize}
  To compute these, we need to compute
  \begin{enuma}
    \item greatest common divisors of two integers and the corresponding B\'ezout identities,
    \item a uniformizer for a place~$\frakp \in \PP_{k(x)}$,
    \item the projection $P_\frakp : \frako_\frakp \to \kappa(\frakp)$ for a specific element of
    $\frako_\frakp$,
    \item the factorization of a polynomial of the form~$Y^n - \alpha$ in some $\kappa(\frakp)[Y]$.
  \end{enuma}
  The first can be done using the Extended Euclidean Algorithm (see
  \cite{vzgathen-moderncomputeralgebra}). For (b) and (c), distinguish between the infinite
  place~$\frakp_\infty$ and the finite places.
  
  For $\frakp = \frakp_\infty$, we have $\kappa(\frakp) \cong k$, and a uniformizer is given by $\pi
  = \frac{1}{x}$. If $f = \lambda \frac{g}{h} \in k(x)^*$ with $g, h \in k[x]$ monic and $\lambda
  \in k^*$, then \[ P_\frakp(f) = \begin{cases} 0 & \text{if } \deg g < \deg h, \\ \lambda &
    \text{if } \deg g = \deg h \end{cases}. \] Now assume that $\frakp$ is a finite place,
  corresponding to a monic irreducible polynomial~$p \in k[x]$. Then we can choose $\pi = p$, and we
  have$\kappa(\frakp) \cong k[x]/(p)$ and $P_\frakp(f) = f + (p)$. In particular, if $p = x -
  \lambda$, then $\kappa(\frakp) \cong k$ and $P(f) = f(\lambda)$.
  
  This allows us to describe the residue field~$\kappa(\frakp)$ and compute the residue map
  $P_\frakp$ for all places of $k(x)$.
  
  We are mainly interested in the case that $k = \F_q$ is a finite field of $q$ elements; in this
  case, $\kappa(\frakp)$ is a finite field of $q^{\deg \frakp}$ elements. In that case,
  factorization of polynomials is well understood
  \cite[Section~14]{vzgathen-moderncomputeralgebra}. The special case of radical polynomials~$Y^n -
  \alpha$ is even easier, if one does not need to know the exact factorization but only the number
  of degree~$d$ prime divisors for all $d \ge 1$. We will investigate this in the next section.
  
  \section{On the Factorization of $Y^n - \alpha$ over a Finite Field}
  \label{factorization}
  
  % CITE RESULTS IN HUNGERFORD !!! ??? ...
  
  In the following, we are interested in obtaining information on the factorization of $Y^n -
  \alpha$ over a finite field~$k = \F_q$, where $n$ is coprime to $q$ and $\alpha \neq 0$. These
  assumptions imply that $Y^n - \alpha$ is squarefree and that all roots are non-zero. We will give
  an explicit algorithm (Algorithm~\ref{splittingalgorithm}) which computes the degrees of the
  irreducible factors of $Y^n - \alpha$ over $\F_q$, and bound its running time.
  
  First, we are interested in the roots of $Y^n - \alpha$ in a specific extension $\F_{q^m}$ of
  $\F_q$, $m \ge 1$. The in $\F_{q^m}$ are exactly the roots of $\gcd(Y^n - a, Y^{q^m - 1} - 1)$.
  By taking a generator~$\beta$ of $\F_{q^m}^*$ and solving the Discrete Logarithm Problem~$\beta^x
  = \alpha$, it is possible to reduce $Y^n - \alpha$ to a linear equation modulo~$q^m - 1$. The same
  can also be achieved by explicitly computing $\gcd(Y^n - \alpha, Y^{q^m - 1} - 1)$, which can be
  done completely without polynomial arithmetic:
  
  \begin{lemma}
    \label{Y^n-a_polygcdlemma}
    Let $k$ be any field and $\alpha, \beta \in k \setminus \{ 0 \}$. Consider $Y^n - \alpha$ and
    $Y^m - \beta$ with $n, m \in \N_{>0}$. Write $\gcd(n, m) = \lambda n + \mu m$ with $\lambda, \mu
    \in \Z$. Then \[ \gcd(Y^n - \alpha, Y^n - \beta) = \begin{cases} Y^{\gcd(n, m)} - \alpha^\lambda
      \beta^\mu & \text{if } \alpha^{\frac{m}{\gcd(n, m)}} = \beta^{\frac{n}{\gcd(n, m)}}, \\ 1 &
      \text{else-wise.} \end{cases} \] It can be computed using the following algorithm:
    \begin{algorithm}{Computing $\gcd(Y^n - \alpha, Y^m - \beta)$}
      \begin{enum1}
        \item Set $A := \Matrix{ m & n \\ 0 & 1 \\ 1 & 0 } = (a_{ij})_{ij}$.
        \item While $a_{11} \neq 0$, do:
        \begin{enumi}
          \item Compute $q := a_{12} \mymod a_{11}$ (so that $0 \le q < a_{11}$).
          \item Set $B := \Matrix{ -q_n & 1 \\ 1 & 0 }$.
          \item Set $A := A \cdot B$.
        \end{enumi}
        \item Compute $c := \alpha^{a_{21}} \beta^{a_{31}}$.
        \item If $c \neq 1$, return $1$.
        \item Set $\gamma := \alpha^{a_{22}} \beta^{a_{32}}$ and return $Y^{a_{12}} - \gamma$.
      \end{enum1}
    \end{algorithm}
  \end{lemma}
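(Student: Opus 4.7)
The plan is to establish the closed-form expression for the gcd first, then deduce correctness of the algorithm from it together with a loop invariant. Write $d := \gcd(n,m)$ and $\gamma := \alpha^\lambda \beta^\mu$ with $\lambda n + \mu m = d$; the argument splits naturally into the \emph{compatible} case $\alpha^{m/d} = \beta^{n/d}$ and the \emph{incompatible} case.

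In the compatible case, I would first verify that $Y^d - \gamma$ is a common divisor by computing
\[ \gamma^{n/d} = \alpha^{\lambda n/d}\, \beta^{\mu n/d} = \alpha \cdot \bigl(\beta^{n/d}/\alpha^{m/d}\bigr)^{\mu} = \alpha, \]
using $\lambda n/d + \mu m/d = 1$ and the hypothesis, and similarly $\gamma^{m/d} = \beta$; then $Y^d - \gamma$ divides $(Y^d)^{n/d} - \gamma^{n/d} = Y^n - \alpha$, and analogously $Y^m - \beta$. For the reverse divisibility, set $g := \gcd(Y^n-\alpha,Y^m-\beta)$ (nontrivial by the previous step) and pass to $R := k[Y]/(g)$. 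Since $Y^n \equiv \alpha \neq 0$ in $R$, $Y$ is a unit there, so $Y^d = Y^{\lambda n + \mu m} = \alpha^\lambda \beta^\mu = \gamma$ in $R$, i.e.\ $g \mid Y^d - \gamma$; monicity combined with the reverse divisibility gives $g = Y^d - \gamma$. In the incompatible case, any common root $\zeta \in \bar{k}$ would satisfy $\zeta^n = \alpha$, $\zeta^m = \beta$, hence $\alpha^{m/d} = \zeta^{nm/d} = \beta^{n/d}$, a contradiction, so $g = 1$.

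For the algorithm, interpret $q$ as the quotient $\lfloor a_{12}/a_{11}\rfloor$ (the bound $0 \le q < a_{11}$ in the description refers to the remainder $a_{12} - q a_{11}$ produced by the update, so ``$\mymod$'' is a mild abuse) and prove by induction on the loop iteration the joint invariant
\[ a_{1j} = n\, a_{2j} + m\, a_{3j} \quad(j=1,2), \qquad \gcd(Y^n-\alpha,Y^m-\beta) = \gcd\bigl(Y^{a_{11}}-\gamma_1,\,Y^{a_{12}}-\gamma_2\bigr), \]
where $\gamma_j := \alpha^{a_{2j}}\beta^{a_{3j}}$. Initial validity is a direct check. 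For the inductive step, computing $AB$ shows the first column of $A$ is replaced by (column~$2$)${}-q\cdot{}$(column~$1$) and the second column by (column~$1$); the polynomial reduction
\[ Y^{a_{12}} - \gamma_2 \equiv \gamma_1^{q}\bigl(Y^{a_{12}-qa_{11}} - \gamma_2\gamma_1^{-q}\bigr) \pmod{Y^{a_{11}}-\gamma_1}, \]
together with $\gamma_1 \neq 0$ (as a nonzero power-product of $\alpha,\beta$), shows the gcd is preserved, and the new $\gamma_1 = \gamma_2\gamma_1^{-q}$ matches the updated exponents.

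Since the first-row entries evolve under the ordinary integer Euclidean algorithm on $(m,n)$, the loop terminates with $a_{11} = 0$ and $a_{12} = d$, at which point the Bézout relation $d = n\,a_{22} + m\,a_{32}$ identifies $a_{22},a_{32}$ as valid coefficients $\mu,\lambda$. The invariant reduces the target to $\gcd(1-\gamma_1,\,Y^d-\gamma_2)$, which is $Y^d - \gamma_2$ when $\gamma_1 = 1$ and $1$ otherwise, matching the two branches in steps~(4)--(5) of the algorithm and agreeing with the closed form. The main obstacle will be the reverse divisibility $g \mid Y^d - \gamma$ in the compatible case: it is essential there to exploit that $Y$ becomes a unit in $R$, so that the integer Bézout coefficients $\lambda,\mu$ — which may be negative — can legitimately be moved into the exponents.
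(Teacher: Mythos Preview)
Your proposal is correct. The algorithmic half of your argument is exactly the paper's approach, only written out more carefully: the paper's proof consists of the single computation
\[
(Y^n-\alpha)\bmod(Y^m-\beta)=\beta^{q}\bigl(Y^{r}-\alpha\beta^{-q}\bigr),\qquad n=qm+r,
\]
which is precisely the reduction you use to propagate your loop invariant, and it leaves the remaining bookkeeping (termination, reading off the Bézout coefficients, the final case split on $\gamma_1$) implicit. What you add beyond the paper is the separate, algorithm-free derivation of the closed form: showing $Y^{d}-\gamma$ divides both polynomials via $\gamma^{n/d}=\alpha$, $\gamma^{m/d}=\beta$, and then obtaining the reverse divisibility by passing to $k[Y]/(g)$ and using that $Y$ is a unit there. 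This is a genuinely different (and cleaner) route to the formula itself; it makes the statement stand on its own without reference to the algorithm, and it explains transparently why the answer is independent of the particular Bézout pair $(\lambda,\mu)$ chosen. The paper's route, by contrast, gets the formula only as the output of the algorithmic process, which is shorter to write down but leaves that independence to the reader.
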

  
  \begin{proof}
    One obtains this by following the Euclidean Algorithm applied on $f$ and $g$. For that one has
    to investigate what the long division of $Y^n - \alpha$ by $Y^m - \beta$ does. Write $n = q m +
    r$ with $0 \le r < m$; as $(Y^m)^q - \beta^q$ is divisible by $Y^m - \beta$, we can write $Y^{m
    q} = \beta^q + h(Y) (Y^m - \beta)$ with $h \in k[Y]$. Then $Y^n - \alpha = (Y^m)^q Y^r - \alpha
    = h(Y) Y^r (Y^m - \beta) + \beta^q Y^r - \alpha$, whence \[ (Y^n - \alpha) \mymod (Y^m - \beta) =
    \beta^q Y^r - \alpha = \beta^q (Y^r - \alpha \beta^{-q}). \]
  \end{proof}

  This allows us to give a precise answer about the number of zeroes of $Y^n - \alpha$ in
  $\F_{q^m}$:
  
  \begin{corollary}
    \label{rootcheckcorollary}
    Let $m \in \N$ and let $d = \gcd(q^m - 1, n)$. Then $Y^n - \alpha \in \F_q[Y]$ has zeroes in
    $\F_{q^m}$ if, and only if, $\alpha^{\frac{q^m - 1}{d}} = 1$. If that is the case, it has
    precisely $d$ zeroes.
  \end{corollary}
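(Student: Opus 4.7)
The plan is to reduce to Lemma~\ref{Y^n-a_polygcdlemma} by observing that the zeroes of $Y^n - \alpha$ in $\F_{q^m}$ are precisely the common roots of $Y^n - \alpha$ and $Y^{q^m-1} - 1$ (the root $0$ is excluded since $\alpha \neq 0$). Hence the number of such zeroes equals $\deg \gcd(Y^n - \alpha, Y^{q^m-1} - 1)$, provided the gcd splits in $\F_{q^m}$ and is separable.

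First, I apply Lemma~\ref{Y^n-a_polygcdlemma} with the second pair $(m, \beta)$ replaced by $(q^m - 1, 1)$. The compatibility condition there reads $\alpha^{(q^m-1)/d} = 1^{n/d} = 1$, which gives exactly the criterion stated in the corollary. When it fails, the gcd equals $1$, so there are no zeroes in $\F_{q^m}$, and both directions of the first assertion follow.

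When the criterion holds, Lemma~\ref{Y^n-a_polygcdlemma} yields $\gcd(Y^n - \alpha, Y^{q^m-1} - 1) = Y^d - \gamma$ for an explicit $\gamma = \alpha^\lambda \in \F_q^*$, where $d = \lambda n + \mu(q^m-1)$. It remains to verify that this degree-$d$ polynomial contributes exactly $d$ distinct roots in $\F_{q^m}$. Since $d \mid n$ and $n$ is coprime to $q$, we have $\gcd(d, q) = 1$, so $(Y^d - \gamma)' = d Y^{d-1}$ is coprime to $Y^d - \gamma$ (using $\gamma \neq 0$), whence $Y^d - \gamma$ is separable with $d$ distinct roots in an algebraic closure. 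Moreover, $Y^d - \gamma$ divides $Y^{q^m-1} - 1$ by construction of the gcd, and $Y^{q^m-1} - 1$ splits completely over $\F_{q^m}$; consequently all $d$ roots of $Y^d - \gamma$ lie in $\F_{q^m}$.

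There is no real obstacle here: the only subtlety is the separability check, which is immediate from the standing assumption $\gcd(n, q) = 1$. The corollary is thus a direct specialization of Lemma~\ref{Y^n-a_polygcdlemma}.
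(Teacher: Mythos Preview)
Your proof is correct and follows exactly the same approach as the paper: both reduce the count of zeroes in $\F_{q^m}$ to $\deg\gcd(Y^n-\alpha,\,Y^{q^m-1}-1)$ and then invoke Lemma~\ref{Y^n-a_polygcdlemma} with $(q^m-1,1)$ in place of $(m,\beta)$. If anything, you are slightly more careful than the paper, which simply asserts that the number of zeroes equals the degree of the gcd, whereas you explicitly justify this via separability of $Y^d-\gamma$ and the fact that it divides $Y^{q^m-1}-1$, which splits over $\F_{q^m}$.
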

  
  \begin{proof}
    The number of zeroes of $Y^n - \alpha$ in $\F_{q^m}$ is given by $\deg \gcd(Y^{q^m - 1} - 1, Y^n
    - \alpha)$. The degree is $> 0$ by the previous lemma if, and only if, $1^{\frac{n}{d}}
    \alpha^{-\frac{q^m - 1}{d}} = 1$, which is the case if, and only if, $\alpha^{\frac{q^m - 1}{d}}
    = 1$. If this is the case, the degree of $\gcd(Y^{q^m - 1} - 1, Y^n - \alpha)$ is $\gcd(q^m - 1,
    n)$ by the previous lemma.
  \end{proof}
  
  Note that this can be proven alternatively, without the use of Lemma~\ref{Y^n-a_polygcdlemma}:
  
  \begin{proof}[Alternative proof of Corollary~\ref{rootcheckcorollary}.]
    Write $\F_{q^m}^* = \ggen{\beta}$ for a primitive element~$\beta$. Write $\alpha = \beta^x$ with
    $x \in \N$. Now $\beta^y$ is an $n$-th root of $\alpha$ if, and only if, $n y \equiv x \pmod{q^m
    - 1}$. But this is known to be solvable if, and only if, $d := \gcd(q^m - 1, n)$ divides $x$; in
    that case, there exist~$d$ solutions.
  \end{proof}
  
  Moreover, we can determine the splitting field of $Y^n - \alpha$:
  
  \begin{corollary}
    \label{splittingdegree}
    The degree of the splitting field of $Y^n - \alpha$ over $\F_q$ is given by \[ \order_{\Z / n
    \order_{\F_q^*}(\alpha) \Z}(q). \]
  \end{corollary}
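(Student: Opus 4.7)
The plan is to reduce the computation of the splitting field to a simple congruence condition in $\Z$, using Corollary~\ref{rootcheckcorollary} as the main input.

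First I would observe that since $n$ is coprime to $q$ and $\alpha \neq 0$, the polynomial $Y^n - \alpha$ is separable with exactly $n$ distinct roots in an algebraic closure of $\F_q$. The splitting field is therefore the smallest $\F_{q^m}$ in which all $n$ roots lie, equivalently, the smallest $m$ such that $Y^n - \alpha$ has exactly $n$ zeroes in $\F_{q^m}$.

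Next I would apply Corollary~\ref{rootcheckcorollary}: writing $d = \gcd(q^m - 1, n)$, the polynomial has $d$ zeroes in $\F_{q^m}$ precisely when $\alpha^{(q^m-1)/d} = 1$, and no zeroes otherwise. To get all $n$ roots, we need $d = n$, i.e. $n \mid q^m - 1$, together with $\alpha^{(q^m - 1)/n} = 1$. Setting $e := \order_{\F_q^*}(\alpha)$, the second condition is equivalent to $e \mid \frac{q^m - 1}{n}$, i.e. $ne \mid q^m - 1$. Since $ne \mid q^m - 1$ already forces $n \mid q^m - 1$, the two conditions combine into the single congruence
\[
q^m \equiv 1 \pmod{n e}.
\]

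The smallest positive $m$ satisfying this is by definition $\order_{(\Z/ne\Z)^*}(q) = \order_{\Z/n\order_{\F_q^*}(\alpha)\Z}(q)$, which is the claimed formula. The only subtle point, and hence the main (modest) obstacle, is verifying that the two divisibility conditions $n \mid q^m - 1$ and $\alpha^{(q^m-1)/n} = 1$ really collapse to the single condition $ne \mid q^m - 1$; the rest is a direct translation of Corollary~\ref{rootcheckcorollary} into the language of multiplicative orders.
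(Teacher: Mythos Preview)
Your proof is correct and follows essentially the same route as the paper: both arguments invoke Corollary~\ref{rootcheckcorollary} to reduce full splitting in $\F_{q^m}$ to the pair of conditions $n \mid q^m - 1$ and $\alpha^{(q^m-1)/n} = 1$, and then collapse these into the single divisibility $n\,\order_{\F_q^*}(\alpha) \mid q^m - 1$. Your write-up is in fact more explicit than the paper's, which dispatches the equivalence with ``easily seen''.
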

  
  \begin{proof}
    Let $m \in \N_{>0}$. Then the condition that $Y^n - \alpha$ splits over $\F_{q^m}$ is equivalent
    to $\gcd(q^m - 1, n) = n$ and $\alpha^{\frac{q^m - 1}{\gcd(q^m - 1, n)}} = 1$. This is easily
    seen to be equivalent to $n \order_{\F_q^*}(\alpha) \mid (q^m - 1)$.
  \end{proof}
  
  Note that there is a field-theoretic interpretation: in case a field $L$ contains all solutions of
  $Y^n - \alpha$, $\alpha \neq 0$, it must contain a primitive $n$-th root of unity. Now $\F_{q^m}$
  contains such a root if, and only if, $n \mid (q^m - 1)$ as $\F_{q^m}^*$ is cyclic of order~$q^m -
  1$. Finally, a field $L$ containing a primitive $n$-th root of unity contains either none or all
  roots of $Y^n - \alpha$.
  
  We now want to compute the degrees of the irreducible polynomials appearing in the factorization
  of $Y^n - \alpha$ over $\F_q$. For that, it suffices to determine the number~$n_m$ of roots of
  $Y^n - \alpha$ in $\F_{q^m}$ which do not lie in any subfield, $m \ge 1$. Then the number of
  irreducible factors of degree~$m$ is given by $\frac{n_m}{m}$.
  
  Hence, this can be done with the following algorithm:
  
  \begin{algorithm}{Compute the degrees of the factorization of $Y^n - \alpha$ over $\F_q$}
    \label{splittingalgorithm}
    \alginput{$n \in \N_{>0}$, a prime power~$q$, $\alpha \in \F_q$.}
    \algoutput{a list $(a_1, \dots, a_m)$ such that $a_i$ is the number of irreducible factors of
    $Y^n - a$ of degree~$i$.}
    \begin{enum1}
      \item Compute $m'' = \order_{\F_q^*}(\alpha)$, $m' = \order_{(\Z / n m'' \Z)^*}(q)$ and set $m
      = \min\{ m', n \}$.
      \item Set $a_1 = \dots = a_m = 0$.
      \item For $i = 1, \dots, m$, do:
      \begin{enumi}
        \item If $i \nmid m'$, continue with the next $i$.
        \item Compute $d = \gcd(q^i - 1, n)$.
        \item If $m''$ does not divide $\frac{q^i - 1}{d}$, continue with the next $i$.
        \item For $j = 2, \dots, \floor{\frac{m}{i}}$, do:
        \begin{enuma}
          \item Set $a_{i \cdot j} = a_{i \cdot j} - (a_i + d)$.
        \end{enuma}
        \item Set $a_i = \frac{a_i + d}{i}$.
      \end{enumi}
      \item Return $(a_1, \dots, a_m)$.
    \end{enum1}
  \end{algorithm}
  
  \begin{proposition}
    \label{splittingalgorithmruntime}
    The algorithm computes the degrees of the factorization of $Y^n - \alpha$ over $\F_q$ in \[
    \calO(n \log^3 n + n \log n \cdot \log^2 q + \log^3 q \cdot \log \log q) \] binary
    operations. We assume that the factorizations of $q - 1$ and $n$ are given, and the
    factorization of $p - 1$ for every prime~$p$ dividing $(q - 1) n$.
  \end{proposition}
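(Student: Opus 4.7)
The plan is to verify correctness and then bound the running time step by step. Correctness follows by induction on the loop index~$i$: Corollary~\ref{rootcheckcorollary} shows that the number of roots of $Y^n - \alpha$ in $\F_{q^i}$ is $d = \gcd(q^i - 1, n)$ when $\alpha^{(q^i-1)/d} = 1$ (equivalently, when $m'' \mid (q^i - 1)/d$) and zero otherwise, while Corollary~\ref{splittingdegree} shows that every root lies in $\F_{q^{m'}}$, so only $i \mid m'$ need be considered. A short induction then shows that when iteration~$i$ is entered, the accumulator equals $a_i = -\sum_{j \mid i,\, j < i} e_j$, where $e_j$ denotes the number of roots of exact degree~$j$; consequently $a_i + d = e_i$, so that the assignment in~(v) correctly records $e_i/i$ as the number of degree-$i$ factors, while~(iv) propagates the subtraction to all strictly larger multiples of~$i$.

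For Step~1 I would apply the standard order-computation algorithm which, given the factorization of the group order, finds an element's order with $O(\log^2 |G|)$ group operations. Computing $m'' = \order_{\F_q^*}(\alpha)$ thus takes $O(\log^2 q)$ multiplications in $\F_q$, and under fast arithmetic (costing $O(\log q \log\log q)$ bit operations each) this totals $O(\log^3 q \log\log q)$. The factorization of $\phi(nm'')$, needed to compute $m' = \order_{(\Z/nm''\Z)^*}(q)$ by the same algorithm, is obtained from the given factorizations of $n$, $q-1$ and of $p - 1$ for all primes $p \mid (q-1)n$, since every prime divisor of $nm''$ divides $n(q-1)$. Running the order algorithm modulo $nm'' \le nq$ costs $O(\log^2(nq))$ multiplications on integers of $O(\log n + \log q)$ bits, which is absorbed by $O(n \log^3 n + \log^3 q \log\log q)$.

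For Step~3 I would bound each iteration $i \le m \le n$ as follows: computing $d = \gcd(q^i-1,n)$ in~(ii) via repeated squaring of $q^i$ modulo~$n$ takes $O(\log i \cdot \log^2 n)$ bit operations; the divisibility check in~(iii) is equivalent to $q^i \equiv 1 \pmod{m''d}$ with $m''d \le n(q-1)$, costing $O(\log i \cdot (\log n + \log q)^2)$; and the inner loop~(iv) performs $O((m/i) \log n)$ work on integers bounded by~$n$. Summing over $i = 1, \dots, m \le n$ with the standard estimates $\sum_{i \le n}\log i = O(n\log n)$ and $\sum_{i \le n} n/i = O(n \log n)$ gives $O(n \log^3 n)$ from~(ii), $O(n\log^3 n + n \log n \log^2 q)$ from~(iii), and $O(n \log^2 n)$ from~(iv). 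Combined with Step~1 this matches the claimed bound.

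The main obstacle is purely bookkeeping: the $n \log^3 n$ term collects the gcd computations modulo~$n$ and the inner-loop arithmetic; the cross term $n \log n \log^2 q$ arises solely from the exponentiations modulo~$m''d$ in step~(iii), which is where the bit-length $\log n + \log q$ of the modulus enters; and the $n$-independent $\log^3 q \log\log q$ term captures the two order computations of Step~1. The analysis must be careful to charge each cost to the right term and to invoke fast integer/field arithmetic exactly at Step~1, while schoolbook multiplication suffices inside the main loop.
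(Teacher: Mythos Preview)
Your proof is correct and follows essentially the same approach as the paper: correctness via Corollaries~\ref{rootcheckcorollary} and~\ref{splittingdegree} plus an inclusion--exclusion over subfields, and a running-time analysis that separates the order computations in Step~1 from the per-iteration cost of the main loop. The only cosmetic differences are that the paper cites Sutherland's order-finding bound $\calO(\log^3 q \cdot \log\log q / \log\log\log q)$ rather than the generic $\calO(\log^2|G|)$-multiplications algorithm, and it bounds each iteration of the loop uniformly rather than summing $\sum_i \log i$; both lead to the same final estimate.
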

  
  Note that the time required to factor $n$ and all $p - 1$ for $p \mid n$ is subexponential in
  $\log n$ for each of this numbers. Hence, the time required for this is less than $\calO(n \log
  n)$, i.e. it is negligible compared to the total running time of the algorithm.
  
  \begin{proof}
    By Corollary~\ref{splittingdegree}, $m'$ is the degree of the splitting field of $Y^n - \alpha$
    over $\F_q$. Hence, the degrees of all possible irreducible factors of $Y^n - \alpha$ divide
    $m'$. Moreover, the maximal degree of such a polynomial is bounded by $m$.
    
    Now let $i$ be a divisor of $m''$ which is $\le m$, and let $d = \gcd(q^i - 1, n)$. Then, by
    Corollary~\ref{rootcheckcorollary}, $Y^n - \alpha$ has roots in $\F_{q^i}$ if, and only if,
    $\alpha^{\frac{q^i - 1}{d}} = 1$, i.e. if, and only if, the order of $\alpha$ divides $\frac{q^i
    - 1}{d}$. In case it has roots in $\F_{q^i}$, the number of roots is $d$, again by
    Corollary~\ref{rootcheckcorollary}. Now we have to subtract from $d$ the number of roots
    already lying in subfields of $\F_{q^i}$ or, alternatively, one subtracts the roots lying in
    subfields from $a_i$ when their number is determined; the latter is done in the loop in
    Step~(3\;iv). Finally, one divides the number by $i$ as $\F_{q^i} / \F_q$ is Galois and the
    minimal polynomials of the roots in $\F_{q^i}$ which lie in no subfield have degree~$i$. This
    shows that the algorithm is correct.
    
    Now $m'' = \calO(q)$, $m' = \calO(n q)$ and $m = \calO(n)$. Computing $\gcd(q^i - 1, n)$ can be
    done by first evaluating $q^i - 1$ modulo $n$ and then computing the $\gcd$, whence this
    requires $\calO(\log i \cdot \log^2 n + \log^2 n) \subseteq \calO(\log i \log^2 n) \subseteq
    \calO(\log^3 n)$ binary operations. Moreover, to check whether $m'' \mid \frac{q^i - 1}{d}$ it
    suffices to evaluate $q^i - 1$ modulo $d m'' = \calO(\log(n q))$, which can be done in
    $\calO(\log i \log^2 (n q)) \subseteq \calO(\log^3 n + \log n \cdot \log^2 q)$ binary
    operations. Hence, the running time of the loop in Step~(3) is $\calO(n \cdot \log n \cdot
    (\log^2 n + \log^2 q))$ binary operations.
    
    As we know the factorization of $q - 1$, we can compute $m'' = \order_{\F_q^*}(\alpha)$ using a
    fast order computation in $\calO(\frac{\log^3 q \cdot \log \log q}{\log \log \log q})$ binary
    operations; see \cite[p.~117, Proposition~7.3]{sutherland-phd}. This algorithm will give the
    factorization of $m''$, whence we know the factorization of $n m''$ and can use that information
    to compute the factorization of $\phi(n m'')$. Hence, the order of $q$ in $(\Z/n m''\Z)^*$ can
    be computed in $\calO(\frac{\log^3 (n q) \cdot \log \log (n q) }{\log \log \log (n q)})$ binary
    operations as $n m'' = \calO(n q)$. In particular, Step~(1) requires $\calO(\frac{\log^3 (n q)
    \cdot \log \log (n q) }{\log \log \log (n q)}) \subseteq \calO(\log^3 n \cdot \log \log n +
    \log^3 q \cdot \log \log q)$ binary operations.
    
    This gives the stated total running time.
  \end{proof}
  
  This section shows how to compute the degrees of the prime factors in the factorization of $Y^n -
  \alpha$ over a finite field. We have seen in Proposition~\ref{monogenicintbases} that this allows
  us to describe the degrees of the places of $K$ lying above a place~$\frakp \in \PP_{k(x)}$. This
  completely suffices to determine whether $K$ has an infinite place of degree~one or to compute the
  Euler factor for $\frakp$ in Section~\ref{eulerprodapprox}. It does not suffice, though, to
  describe the places lying above $\frakp$ as in Proposition~\ref{placedecomposition}; for that, we
  need to compute the prime factors in the decomposition.
  
  To factor $Y^n - \alpha$, one can first compute the number~$n_d$ of prime divisors of degree~$d$
  for all $d \ge 1$ as sketched above; as a side result, this algorithm can compute $\gcd(Y^n -
  \alpha, Y^{q^d - 1} - 1)$ as well and use the same technique to eliminate all prime divisors of
  degree~$< d$. Then the resulting polynomial~$h_d \in \F_q[Y]$ is of degree~$n_d d$, and is the
  product of $n_d$ distinct prime factors of degree~$d$, to which, for example, the
  Cantor-Zassenhaus algorithm can be applied (see \cite{vzgathen-moderncomputeralgebra}).
  
  \section{Integral Bases, Part~2}
  \label{intbasespart2}
  
  In this section, we want to generalize a result of Q.~Wu \cite{wu-radicalffsintbases} on how to
  give an explicit $k[x]$-basis of $\calO$, the integral closure of $k[x]$ in $K$. We will need this
  for computing Riemann-Roch spaces in the next sections.
  
  Moreover, we will give an inequality for the degree of a certain rational function appearing in
  the integral basis and give a precise statement when equality happens; this will be important for
  the determination of the exact constant field of $K / k$ in Section~\ref{compexactconstfield}.
  
  Define \[ z := y \prod_{i=-\infty}^\infty f_i^{-\floor{i / n}}; \] then $k(x, y) = k(x, z)$ and \[
  z^n = \sgn(D) \prod_{i=-\infty}^\infty f_i^{i \mymod n} = \sgn(D) \prod_{j=1}^{n-1} \biggl(
  \prod_{i=-\infty}^\infty f_{j + i n} \biggr)^j =: \tilde{D} \in k[x]. \] For $j = 1, \dots, n-1$,
  define \[ \tilde{G}_i := \prod_{i=-\infty}^\infty f_{j + i n} \in k[x]; \] then $\tilde{G}_1,
  \dots, \tilde{G}_{n-1}$ are pairwise coprime, monic and squarefree polynomials such that \[ z^n =
  \sgn(D) \prod_{i=1}^{n-1} \tilde{G}_i^i = \tilde{D}. \]
%  %Note that \[ \deg \tilde{D} \equiv \deg D \pmod{n} \] and that \[ \gcd(n, j \mid \tilde{G}_j \neq
%  %1) = \gcd(n, j \mid f_j \neq 1); \] more generally,
%  Note that if $\frakp$ is a place of $k(x)$, then \[ \nu_\frakp(\tilde{D}) =
%  \sum_{i=-\infty}^\infty (i \mymod n) \nu_\frakp(f_i) \equiv \sum_{i=-\infty}^\infty i
%  \nu_\frakp(f_i) = \nu_\frakp(D) \pmod{n} \] and $\gcd(n, \nu_\frakp(D)) = \gcd(n,
%  \nu_\frakp(\tilde{D}))$. Moreover, $n - \gcd(n, i)$ does not depends on the class of $i$
%  modulo~$n$, whence \[ g = 1 + \frac{1}{\gcd(n, j \mid f_j \neq 1)} \biggl( -n + \tfrac{1}{n}
%  \sum_{i=-\infty}^\infty \left(n - \gcd(n, i)\right) \deg f_i \biggr). \]
  If we set $\tilde{D}_i := \prod_{j=1}^{n-1} \tilde{G}_j^{\floor{\frac{i j}{n}}}$ for $j \in \{ 0,
  \dots, n - 1 \}$, then \[ \tilde{D}_i = \prod_{j=1}^{n-1} \biggl( \prod_{k=-\infty}^\infty f_{j +
  n k} \biggr)^{\floor{\frac{i j}{n}}} = \prod_{j=-\infty}^\infty f_j^{\floor{\frac{i (j \mymod
  n)}{n}}}. \] By \cite{wu-radicalffsintbases}, $z^i / \tilde{D}_i$, $i = 0, \dots, n - 1$ is a
  $k[x]$-basis of its integral closure in $K$. Now \[ \frac{z^i}{\tilde{D}_i} = y^i
  \prod_{j=-\infty}^\infty f_j^{-\floor{\frac{i (j \mymod n)}{n}} - i \floor{\frac{j}{n}}} =
  \frac{y^i}{D_i} \] with \[ D_i := \prod_{j=-\infty}^\infty f_j^{\floor{\frac{i (j \mymod n)}{n}} +
  i \floor{\frac{j}{n}}} = \prod_{j=-\infty}^\infty f_j^{\floor{\frac{i j}{n}}}. \] In particular,
  $D_0 = 1$. Note that we no longer have $D_1 = 1$ in general; this only happens if $f_i = 0$ for $i
  < 0$ and for $i \ge n$. Hence, we have proven:
  
  \begin{theorem}
    \label{finiteintbasis}
    Let $K = K(x, y)$ where $y^n = D$ with $D \in k(x)^*$. If $D = \sgn(D) \prod_{i=-\infty}^\infty
    f_i^i$ is the squarefree decomposition of $D$ and if \[ D_i := \prod_{j=-\infty}^\infty
    f_j^{\floor{\frac{i j}{n}}}, \] then $D_0 = 1$ and \[ 1, \frac{y}{D_1}, \frac{y^2}{D_2}, \dots,
    \frac{y^{n-1}}{D_{n-1}} \] is an integral basis for $\calO$, the integral closure of $k[x]$ in
    $K$. \qed
  \end{theorem}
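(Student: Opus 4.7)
The plan is to reduce the statement to Wu's integral basis theorem \cite{wu-radicalffsintbases}, which produces an integral basis in the case where the radicand is an $n$-th power free polynomial, and then translate the resulting basis back in terms of $y$ and the squarefree factors $f_j$ of $D$.

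First I would introduce the auxiliary element $z := y \prod_{i} f_i^{-\lfloor i/n \rfloor}$. Since this differs from $y$ by a factor in $k(x)^*$, we have $k(x,z) = k(x,y) = K$. A direct calculation using $D = \sgn(D) \prod_i f_i^i$ and the identity $i = n \lfloor i/n \rfloor + (i \bmod n)$ shows that
\[ z^n = \sgn(D) \prod_{i} f_i^{i \bmod n} = \sgn(D) \prod_{j=1}^{n-1} \tilde G_j^j \in k[x], \]
where $\tilde G_j := \prod_{i} f_{j + in}$ are pairwise coprime, monic and squarefree. Thus $\tilde D := z^n$ is an $n$-th power free polynomial with explicit squarefree decomposition, which places us precisely in the setting of Wu's result.

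Next I would invoke Wu's theorem to conclude that the family $(z^i / \tilde D_i)_{i=0,\dots,n-1}$, with $\tilde D_i := \prod_{j=1}^{n-1} \tilde G_j^{\lfloor ij/n \rfloor}$, forms a $k[x]$-basis of $\calO$. The remaining step is purely a bookkeeping identity: rewriting $\tilde D_i$ back in terms of the $f_j$ using the definition of $\tilde G_j$, and combining with the factor $\prod_j f_j^{-i \lfloor j/n \rfloor}$ picked up from $z^i = y^i \prod_j f_j^{-i \lfloor j/n \rfloor}$, one finds
\[ \frac{z^i}{\tilde D_i} = \frac{y^i}{D_i}, \qquad D_i = \prod_{j} f_j^{\lfloor i(j \bmod n)/n \rfloor + i \lfloor j/n \rfloor} = \prod_{j} f_j^{\lfloor ij/n \rfloor}, \]
where the last equality uses $(j \bmod n) + n \lfloor j/n \rfloor = j$ and the elementary identity $\lfloor i(j - n\lfloor j/n\rfloor)/n \rfloor = \lfloor ij/n \rfloor - i \lfloor j/n \rfloor$.

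The main obstacle is not any single deep step but rather the careful manipulation of floor functions and exponents to confirm the simplification $z^i/\tilde D_i = y^i/D_i$ and the agreement of the exponent with $\lfloor ij/n \rfloor$; once this is verified, the theorem follows immediately from Wu's result applied to $\tilde D$. Finally, $D_0 = 1$ is immediate since all exponents vanish for $i = 0$.
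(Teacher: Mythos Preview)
Your proposal is correct and follows essentially the same route as the paper: introduce $z = y \prod_i f_i^{-\lfloor i/n\rfloor}$ so that $z^n = \tilde D$ is an $n$-th power free polynomial, apply Wu's theorem to obtain the basis $(z^i/\tilde D_i)_i$, and then unwind the floor-function bookkeeping to rewrite $z^i/\tilde D_i = y^i/D_i$ with $D_i = \prod_j f_j^{\lfloor ij/n\rfloor}$. The paper carries out exactly these steps in the discussion preceding the theorem.
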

  
  We will now compare the degree of $D_i$ to $\ceil{i \frac{\deg D}{n}}$, which will later allow us
  to make statements on the exact constant field of $K$.
  
  \begin{lemma}
    \label{Dibounds}
    Let $i \in \{ 1, \dots, n - 1 \}$.
    \begin{enuma}
      \item We have $\deg D_i \le \floor{i \frac{\deg D}{n}} \le \ceil{i \frac{\deg D}{n}}$.
      \item We have $\deg D_i = \ceil{i \frac{\deg D}{n}}$ if, and only if, \[ \forall j \in \Z :
      \deg f_j = 0 \vee n \mid i j. \]
    \end{enuma}
  \end{lemma}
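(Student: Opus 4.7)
The plan is to reduce both claims to the two explicit formulas
$\deg D = \sum_{j \in \Z} j \deg f_j$ and
$\deg D_i = \sum_{j \in \Z} \floor{ij/n} \deg f_j$;
both sums are finite because $f_j = 1$ for all but finitely many~$j$. The first comes from the squarefree decomposition of~$D$, and the second from the definition $D_i = \prod_j f_j^{\floor{ij/n}}$ given just before Theorem~\ref{finiteintbasis}.

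For part~(a), I would apply the universal bound $\floor{ij/n} \le ij/n$ termwise. Since $\deg f_j \ge 0$ always, multiplying preserves the inequality uniformly in~$j$ (be it negative, zero, or positive), and summing gives $\deg D_i \le (i/n) \deg D$. Because $\deg D_i$ is an integer, this sharpens to $\deg D_i \le \floor{i \deg D / n} \le \ceil{i \deg D / n}$.

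For part~(b), the forward direction is the heart of the argument. Combined with~(a), the hypothesis $\deg D_i = \ceil{i \deg D / n}$ forces $\floor{i \deg D / n} = \ceil{i \deg D / n}$, hence $n \mid i \deg D$ and $\deg D_i = i \deg D / n$. Rewriting this identity as $\sum_j (ij/n - \floor{ij/n}) \deg f_j = 0$ exhibits a sum of nonnegative terms, so each summand vanishes, yielding for every~$j$ either $\deg f_j = 0$ or $n \mid ij$. For the converse I would observe that when the stated condition holds, $\floor{ij/n} = ij/n$ in every term with $\deg f_j > 0$, so $\deg D_i = (i/n) \deg D$ is an integer and thus equals both $\floor{i \deg D / n}$ and $\ceil{i \deg D / n}$. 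The only mild subtlety is the treatment of negative indices~$j$; this is harmless because $\deg f_j \ge 0$, so the direction of the inequality $\floor{ij/n} \deg f_j \le (ij/n) \deg f_j$ is preserved regardless of the sign of~$j$.
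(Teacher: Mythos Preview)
Your proof is correct and in fact cleaner than the paper's. Both arguments start from the formulas $\deg D = \sum_j j\deg f_j$ and $\deg D_i = \sum_j \floor{ij/n}\deg f_j$, but the paper interposes an extra step: it bounds
\[
\sum_j \floor{\tfrac{ij}{n}}\deg f_j \;\le\; \sum_j \floor{\tfrac{ij}{n}\deg f_j} \;\le\; \floor{\sum_j \tfrac{ij}{n}\deg f_j},
\]
using $\floor{a}+\floor{b}\le\floor{a+b}$ repeatedly. This produces three separate equality conditions (labelled (1), (2), (3) in the paper), which are then massaged through several reformulations before collapsing to the stated criterion. You instead bound $\floor{ij/n}\deg f_j \le (ij/n)\deg f_j$ directly and sum; the equality case is then a single vanishing sum of nonnegative terms $\sum_j \{ij/n\}\deg f_j = 0$, which immediately gives $\deg f_j = 0$ or $n\mid ij$ termwise. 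Your route avoids the intermediate floor $\floor{(ij/n)\deg f_j}$ entirely and is shorter; the paper's chain is not wrong, just more laboured.
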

  
  \begin{proof}
    First, as $\floor{a} + \floor{b} \le \floor{a + b}$ for all $a, b \in \R$, note that
    \begin{align*}
      \deg D_i ={} & \sum_{j=-\infty}^{\infty} \floor{\frac{i j}{n}} \deg f_j \le
      \sum_{j=-\infty}^\infty \floor{\frac{i j}{n} \deg f_j} \\
      {}\le{} & \floor{\sum_{j=-\infty}^{\infty} \frac{i j}{n} \deg f_j} = \floor{i \frac{\deg
      D}{n}} \le \ceil{i \frac{\deg D}{n}}.
    \end{align*}
    This shows (a), and moreover it shows that (b) is equivalent to that the following three
    conditions are satisfied simultaneously:
    \begin{enum1}
      \item $\forall j \in \Z : \floor{\frac{i j}{n}} \deg f_j = \floor{\frac{i
      j}{n} \deg f_j}$;
      \item $\sum_{j=-\infty}^\infty \floor{\frac{i j}{n} \deg f_j} = \floor{\sum_{j=-\infty}^\infty
      \frac{i j}{n} \deg f_j}$; and
      \item $\floor{\frac{i}{n} \deg D} = \ceil{\frac{i}{n} \deg D}$.
    \end{enum1}
    Now (3) is clearly equivalent to
    \begin{equation}\tag{3'}
      n \mid i \deg D = \sum_{j=1}^{n-1} i j \deg f_j.
    \end{equation}
    To attack (1) and (2), note that $\floor{a} + \floor{b} = \floor{a + b}$ if, and only if, $\{ a
    \} + \{ b \} < 1$; here, $\{ a \} := a - \floor{a}$ is the fractional part of $a$. Then, (1) and
    (2) are equivalent to (1') and (2'), respectively:
    \begin{align}
      & \forall j \in \Z : \{ \tfrac{i j}{n} \} \deg f_j < 1 \tag{1'} \\
      & \sum_{j=-\infty}^\infty \{ \tfrac{i j}{n} \deg f_j \} < 1 \tag{2'}
    \end{align}
    Now $\{ \frac{a}{n} \} = \frac{a \mymod n}{n}$, whence these conditions can be rewritten as
    \begin{align}
      & \forall j \in \Z : (i j \mymod n) \deg f_j < n \tag{1''} \\
      & \sum_{j=-\infty}^\infty (i j \deg f_j \mymod n) < n \tag{2''}
    \end{align}
    Note that (3') is the case if, and only if, $n \mid \sum_{j=-\infty}^\infty (i j \deg f_j \mymod
    n)$. Therefore, (3') and (2'') are together equivalent to
    \begin{align}
      & \sum_{j=-\infty}^\infty (i j \deg f_j \mymod n) = 0, \tag{4}
    \end{align}
    which is clearly equivalent to
    \begin{align}
      & \forall j \in \Z : n \mid (i j \deg f_j). \tag{4'}
    \end{align}
    Thus, $\deg D_i = \ceil{i \frac{\deg D}{n}}$ is equivalent to (1'') and (4'), i.e. to
    \begin{equation}\tag{5}
      \forall j \in \Z : (i j \mymod n) \deg f_j < n \wedge n \mid (i j \deg f_j).
    \end{equation}
    If $\deg f_j = 0$ or $n \mid i j$, we clearly have $(i j \mymod n) \deg f_j < n \wedge n \mid (i
    j \deg f_j)$. Hence, assume that $\deg f_j > 0$ and $n \nmid i j$. In case $n \mid (i j \deg
    f_j)$, we have $\frac{n}{\gcd(n, \deg f_j)} \mid i j$, whence $i j \mymod n$ is a multiple of
    $\frac{n}{\gcd(n, \deg f_j)}$. But $i j \mymod n \neq 0$ as $n \nmid i j$, whence $i j \mymod n
    \ge \frac{n}{\gcd(n, \deg f_j)}$. But then, \[ (i j \mymod n) \deg f_j \ge n \frac{\deg
    f_j}{\gcd(n, \deg f_j)} \ge n. \] Therefore, (5) is equivalent to $\forall j \in \Z : \deg f_j =
    0 \vee n \mid i j$, what we wanted to show.
  \end{proof}
  
  Hence, we saw how to obtain a $k[x]$-basis of the integral closure~$\calO$ of $k[x]$ in the
  radical function field $K$, which is easy to write down once one has the squarefree decomposition
  of $D(x)^*$. This basis is of the form $\frac{y^i}{D_i}$ with $0 \le i < n$, i.e. it allows to
  efficiently test an element~$f = \sum_{i=0}^{n-1} a_i y^i \in K$ with $a_i \in k(x)$ for being
  integral: this is the case if, and only if, $D_i a_i \in k[x]$ for $0 \le i < n$. Moreover, we
  need this result to describe the size of certain transformation matrices in Section~\ref{comprr},
  as well as finding formulas for the degree of the exact constant field over $k$ and the genus of
  $K / k$, which only depend on the numerical data $(n, (\deg f_i)_{i \in \Z})$ (see
  Sections~\ref{compexactconstfield} and \ref{diffdeggenus}).
  
  \section{Matrix Representation of Uniformizing Elements for the Infinite Places}
  \label{matrixrepueip}
  
  This section prepares work for the next section. We want to find a matrix representing certain
  uniformizing elements for the infinite places and make statements on the size of the entries of
  these matrices and their inverses. The final, quantitative result is given in
  Proposition~\ref{MiMiinv-matrixbounds}.
  
  Let $\frakp_1, \dots, \frakp_s$ be the infinite places of $K / k(x)$. Then there exist
  elements~$h_i \in \calO_\infty$ with $\nu_{\frakp_i}(h_i) = 1$ and
  $\nu_{\frakp_j}(h_i) = 0$ for $j \neq i$, as described in Proposition~\ref{placedecomposition}.
  
  We have seen that $\hat{w}_0, \dots, \hat{w}_{n-1}$ with $\hat{w}_i = y^i x^{\floor{i \frac{-\deg
  D}{n}}}$ is a $\frako_\infty$-basis of $\calO_\infty$, and at the same time a $k(x)$-basis of
  $K$. Hence, we can represent $h_i$ and $h_i^{-1}$ as $n \times n$-matrices $M_i$ and $M_i^{-1}$
  over $k(x)$ with respect to this basis, by interpreting them as $k(x)$-vector space endomorphisms
  of $K$ given by \[ v \mapsto h_i v \qquad \text{and} \qquad v \mapsto h_i^{-1} v \qquad (v \in
  K). \] We are interested to give bounds on the numerators and denominators of these matrices. More
  precisely, given a matrix~$M = (m_{ij})_{ij} \in k(x)^{n \times n}$, we define $\deg M =
  \max_{i,j} \deg m_{ij}$.  The denominator of $M$ is a monic polynomial~$d \in k[x]$ of minimal
  degree which satisfies~$d M \subseteq k[x]^{n \times n}$, and the numerator of $M$ is $d M$;
  hence, we are interested in upper bounds for $\deg d$ and $\deg (d M)$.
  
  Note that $\deg(A B) \le \deg A + \deg B$ and $\deg(A + B) \le \max\{ \deg A, \deg B \}$ for all
  $A, B \in k(x)^{n \times n}$. Moreover, note that the strict triangle inequality does not hold in
  general, and that $\deg$ is \emph{not} multiplicatively as soon as $n > 1$.
  
  We first begin with a small lemma on B\'ezout identities:
  
  \begin{lemma}
    \label{bezouteqnlemma}
    Let $a, b \in \N_{>0}$ such that $d := \gcd(a, b) > 0$ satisfies $d < \min\{ a, b \}$.
    \begin{enumi}
      \item There exist $\lambda$, $\mu$ with $0 \le \lambda < \frac{b}{d}$ and $0 \le \mu <
      \frac{a}{d}$ such that $d = \lambda a - \mu b$.
      \item There exist $\lambda'$, $\mu'$ with $0 \le \lambda' < \frac{b}{d}$ and $0 \le \mu' <
      \frac{a}{d}$ such that $d = -\lambda' a + \mu' b$.
    \end{enumi}
  \end{lemma}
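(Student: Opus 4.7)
The plan is to reduce the statement to the standard Bézout identity and then pin down the range of the coefficients by a modular argument. By the extended Euclidean algorithm there exist $\lambda_0, \mu_0 \in \Z$ with $\lambda_0 a - \mu_0 b = d$. The full set of integer solutions to $\lambda a - \mu b = d$ is the coset $\{ (\lambda_0 + k \tfrac{b}{d},\; \mu_0 + k \tfrac{a}{d}) : k \in \Z \}$, since $\tfrac{b}{d}\cdot a - \tfrac{a}{d}\cdot b = 0$. Both $b/d$ and $a/d$ are positive integers (and are $>1$ by the hypothesis $d < \min\{a,b\}$), so by choosing $k$ appropriately I can shift $\lambda$ into the range $[0, b/d)$.

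For (i), I would then argue as follows. The map $\lambda \mapsto \lambda a \bmod b$ restricted to $\lambda \in \{0, 1, \dots, b/d - 1\}$ takes each multiple of $d$ in $[0,b)$ exactly once, since $\gcd(a,b) = d$. In particular, there is a unique $\lambda$ in that range with $\lambda a \equiv d \pmod{b}$, and this $\lambda$ is nonzero (because $\lambda = 0$ would force $d \equiv 0 \pmod{b}$, contradicting $d < b$). Setting $\mu := (\lambda a - d)/b$ yields an integer; I need to check $0 \le \mu < a/d$. The lower bound follows because $\lambda \ge 1$ together with $a > d$ give $\lambda a - d \ge a - d > 0$. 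The upper bound follows from $\lambda \le b/d - 1$, which gives
\[ \mu \;=\; \frac{\lambda a - d}{b} \;\le\; \frac{(b/d - 1)a - d}{b} \;=\; \frac{a}{d} - \frac{a + d}{b} \;<\; \frac{a}{d}. \]

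Part (ii) follows from (i) by swapping the roles of $a$ and $b$: applying (i) with the pair $(b,a)$ in place of $(a,b)$ produces integers $\mu' \in [0, a/d)$ and $\lambda' \in [0, b/d)$ with $\mu' b - \lambda' a = d$, which is precisely the desired identity. There is no real obstacle here; the only mildly subtle point is the uniqueness-of-residue argument that guarantees $\lambda \neq 0$ after the reduction, and that is what the hypothesis $d < \min\{a,b\}$ is there to ensure.
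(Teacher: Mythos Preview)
Your proof is correct and follows essentially the same route as the paper: start from a B\'ezout relation, parametrize all solutions as a coset shifted by multiples of $(b/d,\,a/d)$, pick the unique representative with $\lambda\in[0,b/d)$, and then bound $\mu=(\lambda a-d)/b$ using $0<d<b$ and integrality; part~(ii) is obtained by swapping $a$ and $b$ in both arguments. The only cosmetic difference is that you phrase the selection of $\lambda$ via the bijection $\lambda\mapsto \lambda a\bmod b$ on $\{0,\dots,b/d-1\}$, whereas the paper simply shifts by the appropriate multiple of $b/d$.
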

  
  \begin{proof}
    Let $\lambda'', \mu'' \in \Z$ be arbitrary with $d = \lambda'' a + \mu'' b$. Then the set of all
    solutions of $d = \alpha a + \beta b$ is \[ \{ (\lambda'' + x \tfrac{b}{d}, \mu'' - x
    \tfrac{a}{d}) \mid x \in \Z \}. \]
    \begin{enumi}
      \item First assume $d < b$. Choose $x$ such that $0 \le \lambda := \lambda'' + x \tfrac{b}{d}
      < \tfrac{b}{d}$. Note that there is exactly one such $x$. Then \[ \mu := -(\mu'' - x
      \tfrac{a}{d}) = \frac{\lambda a - d}{b}, \] whence \[ -\frac{d}{b} \le \mu < \frac{a b/d -
      d}{b} = \frac{a}{d} - \frac{d}{b}. \] As $0 < \frac{d}{b} < 1$ we get $0 \le \mu <
      \frac{a}{d}$, as we wanted.
      
      In case $d = b$, we have $a > b$ and $d = 1 \cdot a - (\frac{a}{b} - 1) \cdot b$. Then $0 \le
      \mu := 1 \le 1 = \frac{b}{d}$ and $0 \le \lambda := \frac{a}{b} - 1 < \frac{a}{b} =
      \frac{a}{d}$.
      
      \item This follows from (i) by switching $a$ and $b$. \qedhere
    \end{enumi}
  \end{proof}
  
  Write $D = \sgn(D) \frac{D_{num}}{D_{denom}}$ with pairwise coprime monic polynomials~$D_{num},
  D_{denom} \in k[x]$, and set $D_{\deg} := \max\{ \deg D_{num}, \deg D_{denom} \}$; note that
  $D_{\deg}$ is the \emph{height} of $D \in k(x)$. Note that \[ D_{num} = \prod_{i=1}^\infty f_i^i
  \in k[x] \qquad \text{and} \qquad D_{denom} = \prod_{i=1}^\infty f_{-i}^i \in k[x]. \]
  
  \begin{lemma}
    \label{multmatrixestimate}
    Let $f = \sum_{i=0}^{n-1} a_i x^{b_i} y^i \in K$ with $a_i \in k$, $b_i \in \Z$. Define a matrix
    $M = (m_{ij})_{ij} \in k(x)^{n \times n}$ such that $f \hat{w}_j = \sum_{i=0}^{n-1} m_{ij}
    \hat{w}_i$. If $d \in k[x]$ is monic and of minimal degree with $d M \in k[x]^{n \times n}$,
    then
    \begin{align*}
      \deg d \le{} & \deg D_{denom} + \max\bigl\{ \ceil{-b_{\min} - \tfrac{1}{n}} + 1 + \max\{ \deg
      D, 0 \}, 0 \bigr\} \\
      {}={} & \max\{ \floor{-b_{\min} - \tfrac{1}{n}} + 1 + D_{\deg}, \deg D_{denom} \} =: A
    \end{align*}
    and \[ \deg(d M) \le A + \floor{b_{\max} - \tfrac{1}{n}} + 1 + \max\{ \deg D, 0 \}; \] here,
    \begin{align*}
      b_{\min} :={} & \min\{ b_i + \tfrac{i}{n} \deg D \mid a_i \neq 0 \} \\ \text{and} \qquad
      b_{\max} :={} & \max\{ b_i + \tfrac{i}{n} \deg D \mid a_i \neq 0 \}.
    \end{align*}
  \end{lemma}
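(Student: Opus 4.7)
The plan is to compute the entries of $M$ explicitly using $y^n = D$, bound their rational-function degrees and denominators via elementary floor-function identities, and then assemble the two inequalities.

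First expand $f \hat{w}_j = \sum_{i=0}^{n-1} a_i x^{b_i + \floor{-j \deg D / n}} y^{i+j}$ and, for each $i$ with $a_i \ne 0$, split the term according to whether $i + j < n$ or $i + j \ge n$. When $i + j < n$, the identity $y^{i+j} = \hat{w}_{i+j} \, x^{-\floor{-(i+j)\deg D/n}}$ shows that the $(i+j,\, j)$-entry of $M$ receives the contribution $a_i x^{e_{ij}^{(1)}}$, where $e_{ij}^{(1)} := b_i + \floor{-j\deg D/n} - \floor{-(i+j)\deg D/n}$. When $i + j \ge n$, write $D = \sgn(D)\, D_{num}/D_{denom}$ and use $y^n = D$ to obtain $y^{i+j} = D \cdot \hat{w}_{i+j-n}\, x^{-\floor{-(i+j-n)\deg D/n}}$, so that the $(i+j-n,\, j)$-entry receives $a_i D\, x^{e_{ij}^{(2)}}$ with the analogous $e_{ij}^{(2)}$.

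The technical heart of the proof is the identity $\floor{A+B} - \floor{A} = \floor{\{A\}+B} \in [\floor{B}, \ceil{B}]$. Applied with $A = -(i+j)\deg D/n$ and $B = i\deg D/n$ it gives $b_i + \floor{i \deg D/n} \le e_{ij}^{(1)} \le b_i + \ceil{i \deg D/n}$; applied with $A = -(i+j-n)\deg D/n$ and $B = i\deg D/n - \deg D$ (using $\deg D \in \Z$) it gives $b_i - \deg D + \floor{i\deg D/n} \le e_{ij}^{(2)} \le b_i - \deg D + \ceil{i\deg D/n}$. Since $b_{\min}, b_{\max} \in \tfrac{1}{n}\Z$ and $n > 1$, the numerical identities $\ceil{-b_{\min}} = \floor{-b_{\min} - 1/n} + 1$ and $\ceil{b_{\max}} = \floor{b_{\max} - 1/n} + 1$ convert these exponent bounds into the quantities appearing in the statement.

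For the denominator bound, each case-1 entry has denominator dividing $x^{\max\{0,\, -e_{ij}^{(1)}\}}$, and each case-2 entry has denominator dividing $D_{denom} \cdot x^{\max\{0,\, -e_{ij}^{(2)}\}}$. Taking the least common multiple over all $(i,j)$ and using $-e_{ij}^{(1)} \le \ceil{-b_{\min}}$ together with $-e_{ij}^{(2)} \le \ceil{-b_{\min}} + \max\{\deg D, 0\}$ and the identity $\deg D_{denom} + \max\{\deg D, 0\} = D_{\deg}$ yields the asserted $A$. For the numerator bound, $\deg(d\, m_{ij}) = \deg d + \deg m_{ij}$ as rational functions: in case~1, $\deg m_{ij}^{(1)} = e_{ij}^{(1)} \le \ceil{b_{\max}}$; in case~2, $\deg m_{ij}^{(2)} = \deg D + e_{ij}^{(2)} \le \ceil{b_{\max}}$. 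Adding $A$ (and absorbing the slack $\max\{\deg D, 0\}$ that arises from estimating the case-2 numerator via $\deg D_{num} = \deg D + \deg D_{denom}$) gives the claimed inequality. The only real obstacle is bookkeeping: keeping the two cases properly aligned and consistently applying the floor/ceiling identities; no deeper argument is needed.
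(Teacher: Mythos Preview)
Your proposal is correct and follows essentially the same route as the paper's proof: both compute the matrix entries explicitly by splitting into the two cases $i+j<n$ and $i+j\ge n$, bound the resulting $x$-exponents via elementary floor/ceiling inequalities, and then read off the denominator and numerator bounds. Your floor identity $\floor{A+B}-\floor{A}\in[\floor{B},\ceil{B}]$ plays the same role as the paper's estimate $-\ceil{a\tfrac{\deg D}{n}}+\ceil{b\tfrac{\deg D}{n}}\in[(b-a)\tfrac{\deg D}{n}-1+\tfrac{1}{n},\,(b-a)\tfrac{\deg D}{n}+1-\tfrac{1}{n}]$, and your conversion $\ceil{b_{\max}}=\floor{b_{\max}-\tfrac1n}+1$ is exactly how the paper passes from its real-valued bound to the integer bound in the statement. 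In fact your numerator estimate $\deg(dM)\le A+\ceil{b_{\max}}$ is already sufficient without the extra ``slack'' $\max\{\deg D,0\}$ you mention at the end; that term in the statement is harmless overcounting, so you need not worry about justifying it separately.
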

  
  \begin{proof}
    Note that $\hat{w}_i = y^i x^{\floor{i \frac{-\deg D}{n}}} = y^i x^{-\ceil{i \frac{\deg
    D}{n}}}$. Therefore, \[ a_i x^{b_i} y^i \cdot \hat{w}_i = x^{b_i - \ceil{j \frac{\deg D}{n}}}
    y^{i + j}. \] Next, note that
    \begin{align*}
      & - \ceil{a \tfrac{\deg D}{n}} + \ceil{b \tfrac{\deg D}{n}} = \floor{ -a \tfrac{\deg D}{n} } -
      \floor{ -b \tfrac{\deg D}{n}} \\
      {}={} & (b - a) \tfrac{\deg D}{n} + \tfrac{1}{n} \left( ((-b \deg D) \mymod n) - ((-a \deg D)
      \mymod n) \right),
    \end{align*}
    and the last term lies in the interval $[-1 + \frac{1}{n}, 1 - \frac{1}{n}]$.
    
    Now we want to estimate $\deg d m_{ij}$ as well as $\deg d$ itself. For that, we distinguish two
    cases. Both show that $d$ must be of the form $x^\ell D_{denom}$ for some $\ell \in \Z$, $\ell
    \ge -\nu_0(D_{denom})$ as soon as $f \not\in k$, and allow to give conditions on $\ell$; here,
    $\nu_0(D_{denom})$ denotes the exact power of $x$ dividing $D_{denom}$.
    \begin{enum1}
      \item The first case is $i + j < n$. In that case, \[ m_{i+j,j} = a_i x^{b_i - \ceil{ j
      \tfrac{\deg D}{n}} + \ceil{(i + j) \tfrac{\deg D}{n}}}. \] In case $a_i \neq 0$, we have \[
      \ell \ge -b_i + \ceil{ j \tfrac{\deg D}{n}} - \ceil{(i + j) \tfrac{\deg D}{n}} \] and
      \begin{align*}
        & -b_i + \ceil{ j \tfrac{\deg D}{n}} - \ceil{(i + j) \tfrac{\deg D}{n}} \\
        {}\le{} & -b_i - \tfrac{i}{n} \deg D + 1 - \tfrac{1}{n} \le -b_{\min} + 1 - \tfrac{1}{n}.
      \end{align*}
      Moreover, \[ \deg(x^\ell m_{i+j,j}) \le \ell + b_i + \tfrac{i}{n} \deg D + 1 - \tfrac{1}{n}
      \le b_{\max} + \ell + 1 - \tfrac{1}{n}. \]
      \item The second case is $i + j \ge n$. In that case, \[ m_{i+j-n,j} = a_i x^{b_i - \ceil{ j
      \tfrac{\deg D}{n}} + \ceil{(i + j - n) \tfrac{\deg D}{n}}} \frac{D_{num}}{D_{denom}}. \] In
      case $a_i \neq 0$, we have \[ \ell \ge -b_i + \ceil{ j \tfrac{\deg D}{n}} - \ceil{(i + j - n)
      \tfrac{\deg D}{n}} \] and
      \begin{align*}
        & -b_i + \ceil{ j \tfrac{\deg D}{n}} - \ceil{(i + j - n) \tfrac{\deg D}{n}} \\
        {}\le{} & -b_i - \tfrac{i}{n} \deg D + \deg D + 1 - \tfrac{1}{n} \le -b_{\min} + \deg D + 1
        - \tfrac{1}{n}.
      \end{align*}
      Moreover, 
      \begin{align*}
        \deg(x^\ell m_{i+j-n,j}) \le{} & \ell + b_i + \tfrac{i}{n} \deg D + \deg D + 1 -
        \tfrac{1}{n} \\
        {}\le{} & \ell + b_{\max} + \deg D + 1 - \tfrac{1}{n}.
      \end{align*}
    \end{enum1}
    This shows that
    \begin{align*}
      \deg d ={} & \ell + \deg D_{denom} \\
      {}\le{} & \deg D_{denom} + \max\bigl\{ \ceil{-b_{\min} + 1 - \tfrac{1}{n} + \max\{ \deg D, 0
      \}}, 0 \bigr\} = A
    \end{align*}
    and \[ \deg(x^\ell D_{denom} m_{i,j}) \le A + \floor{b_{\max} + 1 - \tfrac{1}{n} + \max\{ \deg
    D, 0 \}} \] as $\deg(x^\ell D_{denom} m_{i,j}) \in \Z$.
  \end{proof}
  
  Now let us consider $M_i$ and $M_i^{-1}$ obtained from choosing~$h_i$ as in
  Proposition~\ref{placedecomposition}. Remember that we have $\hat{w}_i = x^{\floor{-\frac{i \deg
  D}{n}}} y^i$. We distinguish between three cases:
  \begin{enum1}
    \item $\gcd(\deg D, n) = 1$, i.e. $K$ has exactly one infinite place. First, assume $\deg D >
    0$. Then, by Lemma~\ref{bezouteqnlemma}~(ii) there exist $s, t \in \Z$ with $1 = -s \deg D + t
    n$ with $0 < s < n$ and $0 < t < \deg D$ and we can choose~$h_1 := y^s x^{-t}$.
    
    Next, assume $\deg D < 0$. Then, by Lemma~\ref{bezouteqnlemma}~(i) there exist $s, t \in \Z$
    with $1 = s (-\deg D) + t n$ with $0 < s < n$ and $\deg D < t < 0$ and we can choose~$h_1 := y^s
    x^{-t}$.
    \item $\gcd(\deg D, n) = n$, i.e. the infinite places are unramified. Let $\pi : \frako_\infty
    \to k$ be the projection and $\alpha := \pi(D x^{-\deg D})$. Write $Y^n - \alpha = \prod_{i=1}^t
    g_i$ with pairwise coprime monic polynomials~$g_i \in k[Y]$. Then we can choose $h_i = g_i(y
    x^{-\deg D / n})$ or $h_i = g_i(y x^{-\deg D / n}) + x^{-1}$. Note that $\deg g_i = \deg P_i <
    n$.
    
    \item $\gcd(\deg D, n) =: d \in \{ 2, \dots, n - 1 \}$, in which case there are up to $d$
    infinite places which are all ramified. Let $\pi : \frako_\infty \to k$ be the projection and
    $\alpha := \pi(D x^{-\deg D})$. Write $Y^d - \alpha = \prod_{i=1}^t g_i$ with pairwise coprime
    monic polynomials~$g_i \in k[Y]$.
    
    Next, in case $\deg D > 0$, write $1 = -s \frac{\deg D}{d} + t \frac{n}{d}$ with $s, t \in \Z$
    and $0 < s < \frac{n}{d}$ and $0 < t < \frac{\deg D}{d}$; then $\nu_{P_i}(y^s x^{-t}) = 1$. In
    case $\deg D < 0$, write $1 = s \frac{-\deg D}{d} + t \frac{n}{d}$ with $s, t \in \Z$ and $0 < s
    < \frac{n}{d}$ and $\frac{\deg D}{d} < t < 0$; then $\nu_{P_i}(y^s x^{-t}) = 1$.
    
    In both cases, we can choose $h_i = g_i(y^{n / d} x^{-\deg D / d}) + y^s x^{-t}$. Note that
    $\deg g_i = \deg P_i \le d$.
  \end{enum1}
  
  Now let $d_i \in k[x]$ be monic and of minimal degree with $d_i M_i \in k[x]^{n \times n}$. Using
  Lemma~\ref{multmatrixestimate}, we can give upper bounds for $\deg d_i$ and $\deg (d_i M_i)$. We
  are only interested in quantitative results, but note that one can work out sharper bounds in
  detail using the above material. All involved $\calO$-constants do not depend on $n$ or $D$. We
  have the same three cases as above:
  \begin{enum1}
    \item Note that in this case, $b_{\min} = b_{\max} = -t + \frac{s}{n} \deg D$, whence we have
    $\abs{b_{\min}} = \abs{b_{\max}} = \calO(\abs{\deg D})$.
    \item In this case, $b_{\min} = -1$ and $b_{\max} = 0$ (as $g_i(0) \neq 0$). 
    \item Here, we have $1 - (\frac{1}{d} - \frac{1}{n}) \abs{\deg D} \le b_{\min} \le b_{\max} \le
    -1 + (\frac{1}{d} - \frac{1}{n}) \abs{\deg D}$ (as $g_i(0) \neq 0$). Therefore, $\abs{b_{\min}},
    \abs{b_{\max}} = \calO(D_{\deg})$.
  \end{enum1}
  Applying the lemma, we obtain \[ \deg d_i = \calO(D_{\deg}) \qquad \text{and} \qquad \deg(d_i M_i)
  = \calO(D_{\deg}) \] in all cases.
  
  Instead of repeating the same investigation for the inverses~$M_i^{-1}$, we use some results from
  Linear Algebra. For a matrix~$M \in R^{n \times n}$, where $R$ is any commutative unitary ring,
  one can define the adjugate matrix~$\adj(M) \in R^{n \times n}$ which satisfies $\adj(M) M = M
  \adj(M) = \det(M) \cdot I_n$, where $I_n$ is the $n \times n$ identity matrix. The elements of
  $\adj(M)$ are cofactors of $M$, i.e.\ up to sign determinants of $(n - 1) \times (n -
  1)$~submatrices of $M$. Hence, if $M \in k[x]^{n \times n}$ and we consider the Leibniz formula
  for the determinant, we get $\deg \adj(M) \le (n - 1) \deg M$. Therefore, if $M \in k(x)^{n \times
  n}$ and $d \in k[x] \setminus \{ 0 \}$ is monic and of minimal degree with $d M \in k[x]^{n \times
  n}$, then \[ M^{-1} = d (d M)^{-1} = d \det(d M)^{-1} \adj(d M) = \frac{1}{d^{n - 1} \det M}
  \adj(d M). \] Hence, if $d' \in k[x] \setminus \{ 0 \}$ is monic and of minimal degree with $d'
  M^{-1} \subseteq k[x]^{n \times n}$, we have \[ \deg d' \le (n - 1) \deg d + \max\{ \deg \det M, 0
  \} \] and \[ \deg (d' M^{-1}) \le \max\{ -\deg \det M, 0 \} + (n - 1) \deg (d M). \] We can use
  this to show our quantitative result:
  
  \begin{proposition}
    \label{MiMiinv-matrixbounds}
    Now, if $M = M_i^{t_i}$ for some $i$ and $t_i \in \Z$, and $d \in k[x] \setminus \{ 0 \}$ is
    monic and of minimal degree with $d M \in k[x]^{n \times n}$, then \[ \deg d = \calO(\abs{t_i} n
    D_{\max}) \qquad \text{and} \qquad \deg (d M) = \calO(\abs{t_i} n D_{\max}). \] In case $t_i \ge
    0$, we get the stronger result \[ \deg d = \calO(t_i D_{\max}) \qquad \text{and} \qquad \deg (d
    M) = \calO(t_i D_{\max}). \]
  \end{proposition}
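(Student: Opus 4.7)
The plan is to reduce the general case $M=M_i^{t_i}$ to two ingredients: (a) the entrywise bound for $M_i$ itself, which is already in hand from Lemma~\ref{multmatrixestimate} together with the three case distinctions preceding the proposition, giving $\deg d_i=\calO(D_{\deg})$ and $\deg(d_iM_i)=\calO(D_{\deg})$ for the minimal denominator $d_i$ of $M_i$; and (b) a standard adjugate-formula bound for $M_i^{-1}$. Both numerator- and denominator-degree bounds then propagate through $t_i$-fold products in a controlled way.

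For $t_i\ge 0$ I will write $M_i^{t_i}=(d_iM_i)^{t_i}/d_i^{t_i}$. Using the matrix inequality $\deg(AB)\le\deg A+\deg B$ recorded at the start of Section~\ref{matrixrepueip}, it follows that $\deg((d_iM_i)^{t_i})\le t_i\deg(d_iM_i)=\calO(t_iD_{\deg})$. Since the minimal denominator $d$ of $M_i^{t_i}$ divides $d_i^{t_i}$, one immediately gets $\deg d\le t_i\deg d_i=\calO(t_iD_{\deg})$; and since $(d_i^{t_i}/d)\cdot(dM_i^{t_i})=(d_iM_i)^{t_i}$ holds entrywise in $k[x]$, dividing each entry of the right-hand side by the common factor $d_i^{t_i}/d$ can only lower degrees, so $\deg(dM_i^{t_i})\le\deg((d_iM_i)^{t_i})=\calO(t_iD_{\deg})$. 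This yields the stronger nonnegative-case bound.

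For the inverse, I will invoke the adjugate identity $\adj(N)\cdot N=\det(N)I_n$ with $N:=d_iM_i$, exactly as sketched in the paragraph just before the proposition: $\det(N)\cdot M_i^{-1}=d_i\adj(N)\in k[x]^{n\times n}$. Each entry of $\adj(N)$ is, up to sign, an $(n-1)\times(n-1)$ minor of $N$, hence $\deg\adj(N)\le(n-1)\deg N=\calO(nD_{\deg})$ and $\deg\det N\le n\deg N=\calO(nD_{\deg})$. Thus the minimal denominator $d'$ of $M_i^{-1}$ divides $\det N$ and satisfies $\deg d'=\calO(nD_{\deg})$, while $\deg(d'M_i^{-1})\le\deg(d_i\adj(N))=\calO(nD_{\deg})$.

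Finally, for $t_i<0$ I will iterate the first step with $M_i^{-1}$ in place of $M_i$ and $d'$ in place of $d_i$: writing $M_i^{t_i}=(M_i^{-1})^{|t_i|}$ and reusing the product bound gives $\deg d\le|t_i|\deg d'=\calO(|t_i|\,n\,D_{\deg})$ and $\deg(dM)\le|t_i|\deg(d'M_i^{-1})=\calO(|t_i|\,n\,D_{\deg})$. There is no deep obstacle here; the only real bookkeeping issue—and the reason for the extra factor of $n$ in the negative case—is that the minimal denominator of $M_i^{-1}$ is only known to divide $\det N$, a polynomial of degree $\calO(nD_{\deg})$ rather than $\calO(D_{\deg})$, and one has to be careful that the step from $(d_iM_i)^{t_i}/d_i^{t_i}$ down to the actual minimal numerator/denominator pair does not inflate either degree past these bounds.
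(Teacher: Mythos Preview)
Your proof is correct and follows essentially the same route as the paper: bound $M_i$ via Lemma~\ref{multmatrixestimate} and the preceding case analysis, pass to $M_i^{-1}$ via the adjugate identity, and then iterate for powers. You are in fact more explicit than the paper about why passing from the clearing denominator $d_i^{t_i}$ (resp.\ $(d')^{|t_i|}$) to the \emph{minimal} denominator $d$ does not increase $\deg(dM)$. The one cosmetic difference is that the paper computes $\deg\det M_i$ exactly via the norm identity $\det M_i=\Norm_{K/k(x)}(h_i)$ (so $\deg\det M_i=-\deg\frakp_i$), whereas you bound $\deg\det N$ crudely by $n\deg N$; both yield the same $\calO(nD_{\deg})$ estimate for the inverse, so this does not affect the outcome.
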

  
  \begin{proof}
    Let $d_i' \in k[x] \setminus \{ 0 \}$ be monic and of minimal degree such that $d_i' M_i^{-1}
    \in k[x]^{n \times n}$. Note that $\det M_i = \Norm_{K/k(x)}(h_i)$ equals the norm of $h_i$,
    whence $\nu_{\frakp_\infty}(\det M_i) = \deg P_i$, i.e.\ $\deg \det M_i = -\deg P_i$. Using the
    above results, we see that \[ \deg d_i' = \calO(n D_{\max}) \qquad \text{and} \qquad \deg d_i'
    M_i^{-1} = \calO(n D_{\max}). \]
    
    First, assume that $t_i \ge 0$. Then $M = M_i^{t_i}$, and we can choose~$d = d_i^{t_i}$. Hence,
    $\deg d M = t_i \deg d_i M_i = \calO(t_i D_{\max})$ and $\deg d = \calO(t_i D_{\max})$ from the
    above discussion. Next, assume that $t_i < 0$. In that case, $M = (M_i^{-1})^{-t_i}$, and we can
    choose~$d = (d_i')^{-t_i}$. Hence, $\deg d M = (-t_i) \deg d_i' M_i^{-1} = \calO((-t_i) n
    D_{\max})$ and $\deg d = \calO((-t_i) n D_{\max})$.
  \end{proof}
  
  This shows that the matrices we can use to represent infinite places can be given using $n^2 + 1$
  polynomials whose degree is in $\calO(n D_{\max})$. If we have any selection of integers~$t_1,
  \dots, t_s \in \Z$, then $\prod_{i=1}^s M_i^{t_i}$ can be written in the form $M/d$ with $M \in
  k[x]^{n \times n}$ and $d \in k[x]$ such that all involved polynomials are of
  degree~$\calO(D_{\deg} n \sum_{i=1}^s \abs{t_i})$. We use this to show a bound on the running time
  of Riemann-Roch space computations in the next section.
  
  \section{Computation of Riemann-Roch Spaces}
  \label{comprr}
  
  This section is very central in this paper: it combines everything prepared so far to show how to
  compute Riemann-Roch spaces in radical function fields over finite constant fields, and gives an
  estimate on the running time. Part of the material from this section will be used in the next
  section to compute the exact constant field of $K/k(x)$. The algorithm we will use in this section
  is based on an algorithm of F.~He\ss\ \cite{hessRR}, and is also described in
  \cite{ff-tioagfoaur}. The main results in this section are given in
  Proposition~\ref{preciseRRruntime} and its corollary.
  
  Assume that the infinite places are $\frakp_1, \dots, \frakp_s$. Let $\fraka$ be a non-zero
  $\calO$-ideal and $t_i \in \Z$, $1 \le i \le s$. We are interested in computing a $k$-basis of \[
  B(\fraka, (t_1, \dots, t_s)) := \{ f \in \fraka \mid \nu_{\frakp_i}(f) \ge -t_i, 1 \le i \le s
  \}. \] If $\fraka = \prod_{\frakp \nmid \frakp_\infty} (\frakm_\frakp \cap \calO)^{n_\frakp}$ is
  the factorization of $\fraka$ into prime ideals of $\calO$, then $B(\fraka, (t_1, \dots, t_s))$ is
  exactly the Riemann-Roch space \[ L\left( -\sum_{\frakp \nmid \frakp_\infty} n_\frakp \frakp +
  \sum_{i=1}^s t_i \frakp_i \right). \] Note that any divisor of $K$ can be represented in such a
  form; also see \cite[Section~2.5]{diem-habil}.
  
  Now $\hat{v}_i := y^i / D_i$, $0 \le i < n$ is an integral basis for $\calO$ by
  Theorem~\ref{finiteintbasis}. Next, a $\frako_\infty$-basis for $\calO_\infty$ is given by
  $\hat{w}_i := x^{\floor{i \frac{-\deg D}{n}}} y^i$, $0 \le i < n$ by
  Proposition~\ref{monogenicintbases}. Now, if $M \in k(x)^{n \times n}$ satisfies $(\hat{v}_0,
  \dots, \hat{v}_{n-1}) = (\hat{w}_0, \dots, \hat{w}_{n-1}) M$, then $M = (m_{ij})_{ij}$ with
  $m_{ij} = 0$ for $i \neq j$, and $m_{ii} = x^{\ceil{i \frac{\deg D}{n}}} / D_i$.
  
  Write \[ \fraka = \frac{1}{d(\fraka)} \sum_{i=0}^{n-1} \left( \sum_{j=0}^i a_{ij} \hat{v}_i
  \right) k[x] \] with $\deg a_{ij} < \deg a_{ii}$ for $j < i$, the $a_{ii}$'s being monic, and
  $(\fraka)$ monic of minimal degree. Set $T_\fraka := (\frac{1}{d(\fraka)} a_{ij})_{ij}$ with
  $a_{ij} = 0$ for $j > i$; then $v_0, \dots, v_{n-1}$ is a $k[x]$-basis of $\fraka$ if we set
  $(v_0, \dots, v_{n-1}) := (\hat{v}_0, \dots, \hat{v}_{n-1}) T_\fraka = (\hat{w}_0, \dots,
  \hat{w}_{n-1}) M T_\fraka$. If $M_\fraka := M T_\fraka = (m'_{ij})_{ij}$, then \[ m_{ij}' = m_{ii}
  \frac{a_{ij}}{d(\fraka)} = \frac{a_{ij} x^{\ceil{i \frac{\deg D}{n}}}}{D_i d(\fraka)}. \]
  
  Next, we need elements~$h_i \in \calO_\infty$ with $\nu_{\frakp_i}(h_i) = 1$ and
  $\nu_{\frakp_j}(h_i) = 0$ for $j \neq i$. Then, we can define $M_i$ by $(h_i \hat{w}_0, \dots, h_i
  \hat{w}_{n-1}) = (\hat{w}_0, \dots, \hat{w}_{n-1}) M_i$ and $M((t_i)_i) := \prod_{i=1}^t
  M_i^{t_i}$. We already discussed how to find such elements in Proposition~\ref{placedecomposition}
  and the previous section. Now we can apply the Reduction Algorithm of Paulus \cite{paulusReduct}
  to the matrix~$M((t_i)_i) M_\fraka$. There exists two matrices $U \in \Gl_n(\frako_\infty)$ and $V
  \in \Gl_n(k[x])$ such that \[ U M((t_i)_i) M_\fraka V = \Matrix{ x^{\lambda_1} & 0 & \cdots & 0 \\
  0 & \ddots & \ddots & \vdots \\ \vdots & \ddots & \ddots & 0 \\ 0 & \cdots & 0 & x^{\lambda_n}
  } \] with $\lambda_1, \dots, \lambda_n \in \Z$. Actually, the algorithm computes $M((t_i)_i)
  M_\fraka V$, the $\lambda_i$ can be recovered as the maximal degree appearing in the $i$-th column
  of that matrix, and the algorithm can easily be modified to explicitly compute $V$ as well without
  affecting the asymptotic running time. Now, if we compute $(\tilde{v}_0, \dots, \tilde{v}_{n-1})
  = (v_0, \dots, v_{n-1}) V = (\hat{v}_1, \dots, \hat{v}_n) T_\fraka V$, then a $k$-basis of
  $B(\fraka, (t_1, \dots, t_s))$ is given by \[ \{ x^j \tilde{v}_i \mid 0 \le i < n, \; 0 \le j
  -\lambda_i \}. \]
  
  In the rest of the section, we are interested in estimating the running time. Recall that we
  defined $\deg A = \max_{i,j} \deg{a_{ij}}$ for a matrix~$A = (a_{ij})_{ij} \in k(x)^{n \times
  n}$. The running time of Paulus' algorithm, applied to a matrix $A \in k[x]^{n \times n}$, is
  $\calO(n^4 (\deg A)^2)$ operations in $k$; see \cite[Proposition~3.3]{paulusReduct}.
  
  First, let us write $M_\fraka = \frac{1}{d} M_\fraka'$ with $M_\fraka' \in k[x]^{n \times n}$ and
  $d \in k[x]$. For that, note that $\frac{D_{num}}{D_i} = \prod_{j=-\infty}^{-1}
  f_j^{-\floor{\frac{i j}{n}}} \prod_{j=1}^\infty f_j^{-\floor{\frac{i j}{n}} + j} \in k[x]$; this
  shows that we can choose $d = D_{num} d(\fraka)$. Set $M_\fraka' := d M_\fraka \in k[x]^{n \times
  n}$. Then
  \begin{align*}
    \deg M_\fraka' ={} & \max_{i,j=0, \dots, n-1} \left( \deg a_{ij} + \ceil{\frac{i \deg D}{n}} -
    \deg D_i + \deg D_{num} \right) \\
    {}\le{} & \max_{i,j=0,\dots,n-1} \deg a_{ij} + \max\{ \deg D, 0 \} + \deg D_{num} \\
    {}={} & \calO(\deg d(\fraka) T_\fraka + D_{\deg})
  \end{align*}
  and \[ \deg d = \deg d(\fraka) + \deg D_{num}. \]
  
  Next, using Proposition~\ref{MiMiinv-matrixbounds} we see that if $d' \in k[x]$ is monic and of
  minimal degree with $d' M((t_i)_i) \in k[x]^{n \times n}$, then \[ \deg d' = \calO\biggl(
  \sum_{i=1}^s \abs{t_i} n D_{\deg} \biggr) \quad \text{and} \quad \deg(d' M((t_i)_i)) =
  \calO\biggl( \sum_{i=1}^s \abs{t_i} n D_{\deg} \biggr). \]
  
  \begin{proposition}
    \label{preciseRRruntime}
    Let $\fraka$ be an $\calO$-ideal of $K$ represented with respect to the $k[x]$-basis $y^0/D_0,
    \dots, y^{n-1}/D_{n-1}$ of $\calO$ as $M/d$ with $M \in k[x]^{n \times n}$ and $d \in
    k[x]$. Moreover, let $t_1, \dots, t_s \in \Z$ be integers. Then the running time required for
    computing a $k$-basis of \[ L\biggl( \sum_{\frakp \nmid \frakp_\infty} n_\frakp \frakp +
    \sum_{i=1}^s t_i \frakp_i \biggr) \] is \[ \calO\biggl( \biggl(n + \sum_{i=1,\dots,s} \max\{ 0,
    \log \abs{t_i} \}\biggr) n^3 \biggl( \biggl(1 + n \sum_{i=1}^s \abs{t_i}\biggr) D_{\deg} + \deg
    M \biggr)^2 \biggr) \] operations in $k$, and requires a storage of \[ \calO\biggl( n^2 \biggl(
    \biggl(1 + n \sum_{i=1}^s \abs{t_i}\biggr) D_{\deg} + \deg M \biggr) \biggr) \] elements of $k$.
  \end{proposition}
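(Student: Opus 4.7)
The plan is to combine three ingredients: (i) the degree bounds for the integral basis denominators from Theorem~\ref{finiteintbasis}; (ii) the matrix-size bounds for $\prod_{i=1}^s M_i^{t_i}$ coming from Proposition~\ref{MiMiinv-matrixbounds}; and (iii) the running time of Paulus' reduction algorithm. I would organize the proof into three steps, mirroring the three stages of the algorithm described immediately before the statement.

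First I would bound the degrees of all matrices occurring in the computation. The preceding paragraphs already show that the matrix $M_\fraka' = d(\fraka) D_{num} M_\fraka \in k[x]^{n\times n}$ satisfies $\deg M_\fraka' = \mathcal{O}(\deg M + D_{\deg})$, using that $D_i \mid D_{num}$ by Theorem~\ref{finiteintbasis}, and that its scalar denominator has degree $\mathcal{O}(\deg M + D_{\deg})$ as well. By Proposition~\ref{MiMiinv-matrixbounds}, the matrix $M((t_i)_i) = \prod_{i=1}^s M_i^{t_i}$ can be written as $M''/d''$ with $d'' \in k[x]$ and $M'' \in k[x]^{n\times n}$, both of degree $\mathcal{O}(n\sum_i|t_i| D_{\deg})$. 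Multiplying the two and clearing the common denominator yields a matrix $A \in k[x]^{n\times n}$ of degree $\Delta := \mathcal{O}\bigl((1 + n\sum_i|t_i|) D_{\deg} + \deg M\bigr)$.

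Next I would bound the cost of assembling $A$. A single multiplication of two $n\times n$ matrices whose entries have degree at most $\Delta$ costs $\mathcal{O}(n^3 \Delta^2)$ operations in $k$ with schoolbook polynomial arithmetic. Computing $M_i^{t_i}$ by binary exponentiation uses $\mathcal{O}(\log|t_i|)$ matrix multiplications, each again on matrices of degree $\mathcal{O}(\Delta)$ by Proposition~\ref{MiMiinv-matrixbounds} (applied to the partial exponents, which are bounded by $|t_i|$). Combining the $s \le n$ factors and multiplying on $M_\fraka'$ adds $\mathcal{O}(n)$ further multiplications, all at cost $\mathcal{O}(n^3 \Delta^2)$. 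The total cost of producing $A$ is therefore
\[
  \mathcal{O}\Bigl(\bigl(n + \textstyle\sum_i \max\{0,\log|t_i|\}\bigr)\, n^3 \Delta^2 \Bigr).
\]

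Finally, by \cite[Proposition~3.3]{paulusReduct}, Paulus' reduction applied to $A$ runs in $\mathcal{O}(n^4 \Delta^2)$ operations in $k$, which is absorbed into the bound above via $n^4 = n \cdot n^3$. The subsequent computation of the $\tilde{v}_i$ and the enumeration of the basis $\{x^j \tilde v_i\}$ add nothing asymptotically. For storage, throughout the computation we only need to hold a constant number of $n\times n$ matrices of polynomials of degree $\mathcal{O}(\Delta)$, giving the stated storage bound of $\mathcal{O}(n^2 \Delta)$ elements of $k$. The one step needing care is the observation that all intermediate partial products during the binary exponentiation still satisfy the degree bound of Proposition~\ref{MiMiinv-matrixbounds}; this is the only place where one has to resist bounding each multiplication at its own worst-case degree and then summing, which would otherwise spoil the logarithmic factor.
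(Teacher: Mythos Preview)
Your proof is correct and follows essentially the same route as the paper: bound the degree of $M((t_i)_i)M_\fraka$ via Proposition~\ref{MiMiinv-matrixbounds} and the preceding paragraph, count $\calO(n+\sum_i\max\{0,\log|t_i|\})$ matrix multiplications (binary exponentiation for each $M_i^{t_i}$ plus $\calO(s)\subseteq\calO(n)$ to combine and post-process), charge each multiplication $\calO(n^3\Delta^2)$, and absorb Paulus' $\calO(n^4\Delta^2)$ step. Your explicit remark that the intermediate powers in square-and-multiply stay within the degree bound of Proposition~\ref{MiMiinv-matrixbounds} (since partial exponents are $\le|t_i|$) is a point the paper leaves implicit, so your write-up is if anything slightly more careful here.
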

  % degree of d is missing!!!
  
  \begin{proof}
    We assume that the result is given in a `compact form', i.e. $\tilde{v}_i$ is only given once
    and not for every power of $x$ multiplied to it. Moreover, each element in the result has to be
    divided by $d(\fraka)$.
    
    Note that the given running time and space requirements essentially describe the running time of
    Paulus' algorithm and the matrix~$d d' M((t_i)_i) M_\fraka$. Clearly, the storage required for
    matrix multiplications is at most a constant multiple of the storage for one matrix.
    
    For computation of a $k$-basis, we also have to construct the matrix $M((t_i)_i) M_\fraka$ first
    by multiplying all required matrices together, and then, after applying Paulus' algorithm,
    collecting the information from the algorithm to compose the $k$-basis of the Riemann-Roch
    space.
    
    Note that multiplying two matrices~$A, B \in k[x]^{n \times n}$ requires $\calO(n^3 (\deg A +
    \deg B)^2 )$ operations in $k$. Hence, we have to show that the number of matrix multiplications
    is $\calO(n + \sum_{i=1,\dots,s} \max\{ 0, \log \abs{t_i} \})$.
    
    The last step requires multiplication of the matrix~$V$ obtained from Paulus' algorithm with the
    basis~$(v_0, \dots, v_{n-1}) = (\hat{v}_0, \dots, \hat{v}_{n-1}) M$. The matrix obtained from
    Paulus' algorithm is of the same size as the input matrix, i.e. the entries are of degree \[
    \calO\biggl( \biggl(1 + n \sum_{i=1}^s \abs{t_i}\biggr) D_{\deg} + \deg M \biggr). \]
    Multiplying it by $M$ and by the elements $\hat{v}_i$ shows that the result is of the same
    size. Here, two matrix multiplications are required.
    
    Finally, for computing $M_\fraka$, every entry of $M$ is multiplied with a polynomial; the
    running time is less than the running time for one matrix multiplication. For computation of
    $M((t_i)_i)$, one requires $\calO( \sum_{i=1}^s (\max\{ \log \abs{t_i}, 0 \} + 1) )$~matrix
    multiplications, and multiplying $M((t_i)_i)$ with $M_\fraka$ requires a last one. As $s \le n$,
    the claim follows.
  \end{proof}
  
  Noting that $\sum_{i=1}^s \log \max\{ \abs{t_i}, 1 \} \le s \log \max\{ \sum_{i=1}^s \abs{t_i}, 1
  \}$ and $s \le n$, we get the following special case:
  
  \begin{corollary}
    In case there exists some~$G > 0$ with $\deg M = \calO(n G)$ and $\sum_{i=1}^s \abs{t_i} =
    \calO(G)$, then the running time is \[ \calO\bigl( n^6 D_{\deg}^2 G^2 \log G \bigr) \]
    operations in $k$ and the storage requirement is \[ \calO\bigl( n^3 G D_{\deg} \bigr) \]
    elements of $k$. \qed
  \end{corollary}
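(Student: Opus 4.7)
The plan is to read off the corollary as a direct specialization of Proposition~\ref{preciseRRruntime}, with the only work being to substitute the hypotheses $\deg M = \calO(n G)$ and $\sum_{i=1}^s \abs{t_i} = \calO(G)$ into the two bounds given there and to simplify.

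First I would treat the ``large'' factor appearing inside the square in the running-time bound and inside the storage bound, namely
\[
\bigl(1 + n \tsum_{i=1}^s \abs{t_i}\bigr) D_{\deg} + \deg M.
\]
Substituting the hypotheses turns this into $\calO(n G D_{\deg}) + \calO(n G) = \calO(n G D_{\deg})$, since $D_{\deg} \ge 1$ in any case of interest. Squaring it for the running time gives $\calO(n^2 G^2 D_{\deg}^2)$, and leaving it unsquared for the storage gives $\calO(n G D_{\deg})$, which multiplied by the outer $n^2$ already yields the claimed storage bound $\calO(n^3 G D_{\deg})$.

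Next I would handle the logarithmic prefactor $n + \sum_{i=1}^s \max\{0, \log \abs{t_i}\}$ in the running time. Using the elementary inequality flagged just before the corollary,
\[
\sum_{i=1}^s \log \max\{\abs{t_i}, 1\} \le s \log \max\bigl\{\tsum_{i=1}^s \abs{t_i},\, 1\bigr\},
\]
together with $s \le n$ and $\sum \abs{t_i} = \calO(G)$, this prefactor is bounded by $\calO(n + n \log G) = \calO(n \log G)$ (the statement implicitly uses $G$ large enough that $\log G$ dominates the constant, which is the standard convention). Multiplying the pieces,
\[
\calO(n \log G) \cdot n^3 \cdot \calO(n^2 G^2 D_{\deg}^2) = \calO(n^6 D_{\deg}^2 G^2 \log G),
\]
which is the asserted running time.

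There is no substantial obstacle here: the proof is bookkeeping. The only mildly delicate point is the handling of the $\log \abs{t_i}$-sum, where one must invoke the concavity-style inequality quoted above rather than naively bounding each $\log \abs{t_i}$ by $\log G$ (that would be off by a factor of $s$ in principle, though here $s \le n$ makes both routes work). With that inequality in hand, the corollary falls out by direct substitution and no further argument is needed.
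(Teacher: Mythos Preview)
Your proposal is correct and follows exactly the approach the paper takes: the corollary is marked with a \qed because it is immediate from Proposition~\ref{preciseRRruntime} once one substitutes the hypotheses and uses the inequality $\sum_{i=1}^s \log \max\{\abs{t_i},1\} \le s \log \max\{\sum_{i=1}^s \abs{t_i},1\}$ together with $s \le n$, which the paper states just before the corollary. Your bookkeeping matches the paper's intended simplification step for step.
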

  
  Note that in case $\fraka$ is a product of at most two reduced ideals and the sum of the
  $\abs{t_i}$'s is $\calO(g)$, where $g$ is the genus of $K / k$, we see that we can choose $G = g$
  (see \cite{ff-tioagfoaur}). In particular, the running time required for a giant step or a
  reduction in the sense of \cite{ff-tioagfoaur} is \[ \calO( n^6 D_{\deg}^2 g^2 (\log g)^2 ) \]
  operations in $k$. In Corollary~\ref{genusbigODdeg} we will see that $g = \calO(n D_{\deg})$,
  whence we obtain the running time \[ \calO( n^8 D_{\deg}^4 (\log n + \log D_{\deg})^2 ). \] This
  is a much more precise estimate than the standard estimates as in \cite{diem-habil} that
  arithmetic is polynomial in $n$, $g$ and the size of the representation of $K$; the latter is in
  this case bounded by $2 D_{\deg}$.
  
  \section{Computing the Exact Constant Field}
  \label{compexactconstfield}
  
  In this section we will give an explicit description of the exact constant field~$k'$ of $K /
  k(x)$, using the methods from the previous section on the computation of Riemann-Roch spaces. Note
  that $k' = L(0)$, the Riemann-Roch space of the zero divisor. For the main result, see
  Theorem~\ref{exconstfieldthm}.
  
  To compute $L(0)$, consider the matrix $M = (m_{ij})_{ij}$ with $m_{ij} = 0$ for $i \neq j$ and
  $m_{ii} = x^{\ceil{i \frac{\deg D}{n}}} / D_i$. Applying Paulus' algorithm \cite{paulusReduct} to
  this matrix will return the matrix itself. Let $\lambda_i := \deg m_{ii}$. Hence, a $k$-basis of
  $L(0)$ is given by $\{ \hat{v}_i x^j \mid 0 \le j < -\lambda_i, 0 \le i < n \}$. Obviously,
  $-\lambda_i = \deg D_i - \ceil{i \frac{\deg D}{n}}$. Clearly, $-\lambda_0 = 0$, which is not
  surprising after all as $\hat{v}_0 = 1 \in L(0)$ and $x \not\in L(0)$.
  
  Therefore, we have \[ \dim_k L(0) = 1 + \sum_{i=1}^{n-1} \max\left\{ 1 + \deg D_i - \ceil{i
  \frac{\deg D}{n}}, 0 \right\} \] and a $k$-basis of $L(0)$ is given by $x^j \hat{v}_i$ with $0 \le
  i < n$ and $j = 0, \dots, \max\left\{ \deg D_i - \ceil{i \frac{\deg D}{n}}, -1 \right\}$. Now by
  Lemma~\ref{Dibounds}~(a), $\deg D_i - \ceil{i \frac{\deg D}{n}} \le 0$. Combining all this, we get
  the following result:
  
  \begin{proposition}
    We have \[ \dim_k L(0) = 1 + \abs{\biggl\{ i \;\biggm|\; 1 \le i < n, \; \deg D_i = \ceil{i
    \frac{\deg D}{n}} \biggr\}}, \] and a basis is given by \[ \biggl\{ 1, \frac{y^i}{D_i}
    \;\biggm|\; 1 \le i < n, \; \deg D_i = \ceil{i \frac{\deg D}{n}} \biggr\}. \] \qed
  \end{proposition}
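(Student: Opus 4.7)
The plan is to observe that both assertions follow immediately from the formulas in the paragraph preceding the proposition once we invoke Lemma~\ref{Dibounds}~(a). The preceding discussion already established, via Paulus' algorithm applied to the diagonal matrix $M$ representing the change from $(\hat w_i)$ to $(\hat v_i)$, that
\[ \dim_k L(0) = 1 + \sum_{i=1}^{n-1} \max\left\{ 1 + \deg D_i - \ceil{i \tfrac{\deg D}{n}},\; 0 \right\} \]
with an explicit $k$-basis $\{ x^j \hat v_i \mid 0 \le i < n,\; 0 \le j \le \max\{\deg D_i - \ceil{i \tfrac{\deg D}{n}},\, -1\}\}$. So only the reduction to the indicator-set form remains.

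First I would invoke Lemma~\ref{Dibounds}~(a), which gives $\deg D_i \le \ceil{i \tfrac{\deg D}{n}}$ for every $i \in \{1, \dots, n-1\}$. Consequently the integer $\deg D_i - \ceil{i \tfrac{\deg D}{n}}$ is at most $0$, and it equals $0$ precisely when $\deg D_i = \ceil{i \tfrac{\deg D}{n}}$; otherwise it is at most $-1$.

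Next I would rewrite each summand. In the equality case, $\max\{1 + \deg D_i - \ceil{i \tfrac{\deg D}{n}},\, 0\} = \max\{1, 0\} = 1$; in the strict-inequality case, $1 + \deg D_i - \ceil{i \tfrac{\deg D}{n}} \le 0$, so the maximum is $0$. Summing over $i = 1, \dots, n-1$ yields the claimed dimension formula as the number of $i$'s for which equality holds (plus the initial $1$).

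Finally I would read off the basis the same way: the range $0 \le j \le \max\{\deg D_i - \ceil{i \tfrac{\deg D}{n}},\, -1\}$ is empty when $\deg D_i < \ceil{i \tfrac{\deg D}{n}}$ (since the max is $-1$) and consists of the single value $j = 0$ when $\deg D_i = \ceil{i \tfrac{\deg D}{n}}$ (since the max is $0$). In the latter case the corresponding basis vector is $x^0 \hat v_i = y^i/D_i$, and together with the constant $1 = \hat v_0$ these produce the asserted basis. Since this is just a case-split on the sign governed by Lemma~\ref{Dibounds}~(a), there is no genuine obstacle; the only point requiring care is checking that the extreme case $\deg D_i - \ceil{i \tfrac{\deg D}{n}} = 0$ contributes exactly one basis element (the element $\hat v_i$ itself) rather than none.
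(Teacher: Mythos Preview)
Your proposal is correct and follows essentially the same approach as the paper: the paper's own argument is the paragraph immediately preceding the proposition, which records the dimension and basis formulas from the Riemann--Roch computation, invokes Lemma~\ref{Dibounds}~(a) to get $\deg D_i - \ceil{i \frac{\deg D}{n}} \le 0$, and then says ``Combining all this'' to arrive at the proposition. Your write-up simply makes the case-split (equality versus strict inequality) explicit, which is exactly what ``combining all this'' entails.
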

  
  Our next aim is to describe the occurring integers~$i$ more precisely. Now
  Lemma~\ref{Dibounds}~(b) says that $\deg D_i < \ceil{i \frac{\deg D}{n}}$ if, and only if, there
  exists a $j \in \Z$ with $\deg f_j > 0$ and $n \nmid i j$. This condition can be described in an
  easier way:
  
  \begin{lemma}
    Let $n \in \N_{>0}$ and $S \subseteq \Z$ be a non-empty subset. Then, for a fixed~$i \in \Z$,
    \begin{equation}\tag{*}
      \exists j \in S : n \nmid i j
    \end{equation}
    is satisfied if, and only if, \[ n \nmid i \gcd(n, j \mid j \in S). \]
  \end{lemma}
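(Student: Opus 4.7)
The plan is to prove the contrapositive equivalence
\[ \forall j \in S : n \mid i j \quad \Longleftrightarrow \quad n \mid i g, \]
where $g := \gcd(n, j \mid j \in S)$. The direction $(\Leftarrow)$ is immediate: since by definition $g$ divides every $j \in S$, we can write $j = g k_j$ for some $k_j \in \Z$, so $i j = (i g) k_j$, and $n \mid i g$ forces $n \mid i j$ for every $j \in S$.

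For $(\Rightarrow)$, I would use two standard facts about $\gcd$ in $\Z$: first, $\gcd(i a_1, i a_2, \dots) = \abs{i} \gcd(a_1, a_2, \dots)$ (taken over any nonempty index set; only finitely many elements matter because $\gcd$'s in $\Z$ stabilize), and second, if $n$ divides two integers then $n$ divides their $\gcd$, since the $\gcd$ is an integral linear combination of them. Assuming $n \mid i j$ for every $j \in S$ we get $n \mid \gcd(i j \mid j \in S) = \abs{i} \gcd(j \mid j \in S)$, hence $n \mid i \gcd(j \mid j \in S)$. Combining with the trivial fact $n \mid i n$ and applying the second principle gives
\[ n \mid \gcd\bigl( i n,\; i \gcd(j \mid j \in S) \bigr) = i \gcd\bigl( n, \gcd(j \mid j \in S) \bigr) = i g. \]

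There is no real obstacle; the only small subtlety is handling $S$ possibly being infinite, which is dealt with by noting that $\gcd(\{n\} \cup S)$ coincides with $\gcd(\{n\} \cup S_0)$ for some finite $S_0 \subseteq S$ (as divisor chains in $\Z$ stabilize), so all identities above reduce to the finite case. Sign conventions are harmless since divisibility by $n$ is insensitive to the sign of $i$.
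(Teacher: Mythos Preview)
Your proof is correct and takes a genuinely different route from the paper's. You argue the contrapositive directly via the homogeneity of $\gcd$ (namely $\gcd(i j \mid j \in S) = \abs{i}\gcd(j \mid j \in S)$) together with the observation $n \mid i n$, which lets you collapse everything to $n \mid i g$ in two short steps. The paper instead reformulates the negation of $(*)$ as membership of $i$ in the subgroup $A = \bigcap_{j \in S} \tfrac{n}{\gcd(n,j)}\Z$, and then computes the generator of $A$ via the identity $\lcm\bigl(\tfrac{n}{\gcd(n,j)} \mid j \in S\bigr) = \tfrac{n}{\gcd(n, j \mid j \in S)}$. Your argument is shorter and needs only gcd identities; the paper's version has the mild conceptual advantage of exhibiting the set of $i$ for which $(*)$ fails as a subgroup of $\Z$, which explains structurally why the answer is governed by a single divisibility condition.
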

  
  \begin{proof}
    Define $A := \bigcap_{j \in S} \frac{n}{\gcd(n, j)} \Z$. We first show that ($*$) is equivalent
    to $i \not\in A$. Note that $n \Z \subseteq A$.
    
    For that, assume ($*$). Then, by assumption, there exists a $j \in S$ with $n \nmid i j$, whence
    $\frac{n}{\gcd(n, j)} \nmid i$. But then, $i \not\in A$.
    
    Now, assume that $i \not\in A$. Then there exists at least one $j \in S$ with $i \not\in
    \frac{n}{\gcd(n, j)} \Z$, which means $\frac{n}{\gcd(n, j)} \nmid i$, i.e. $n \nmid i
    j$. Therefore, ($*$) holds.
    
    Thus, we have that ($*$) is equivalent to $i \not\in A$. Now let us study $A$. Clearly, \[ A =
    \bigcap_{j \in S} \frac{n}{\gcd(n, j)} \Z = \lcm\biggl( \frac{n}{\gcd(n, j)} \;\biggm|\; j \in S
    \biggr) \Z. \] Since $n$ is a common multiple of the $\frac{n}{\gcd(n, j)}$, $j \in S$, the
    $\lcm$ must be of the form $\frac{n}{\ell}$, $\ell \in \{ 1, \dots, n \}$. Now $\frac{n}{\gcd(n,
    j)} \mid \frac{n}{\ell}$ if, and only if, $\ell \mid \gcd(n, j)$. Therefore, \[ \lcm\biggl(
    \frac{n}{\gcd(n, j)} \;\biggm|\; j \in S \biggr) = \frac{n}{\gcd(\gcd(n, j) \mid j \in S)}. \]
    Moreover, note that $\gcd(\gcd(n, j) \mid j \in S) = \gcd(n, j \mid j \in S)$.  Summing up what
    we have so far, we get that ($*$) is equivalent to $i \not\in \frac{n}{\gcd(n, j \mid j \in S)}
    \Z$, i.e. to $n \nmid i \gcd(n, j \mid j \in S)$.
  \end{proof}
  
  Now we can give a precise statement on the exact constant field:
  
  \begin{theorem}
    \label{exconstfieldthm}
    We have
    \begin{align*}
      [k' : k] = \dim_k k' ={} & \abs{\{ i \in \{ 0, \dots, n - 1 \} \mid n \text{ divides } i
      \gcd(n, j \mid f_j \neq 1) \}} \\
      {}={} & \gcd(n, j \mid f_j \neq 1).
    \end{align*}
    Moreover, a $k$-basis of $k'$ is given by \[ \biggl\{ \frac{y^i}{D_i} \;\biggm|\; i \in \{ 0,
    \dots, n - 1 \}, \; \frac{n}{\gcd(n, j \mid f_j \neq 1)} \text { divides } i \biggr\}, \] where
    $\frac{y^0}{D_0} = 1$. In particular, $k$ is the exact constant field of $K$ if, and only if,
    $\gcd(n, j \mid f_j \neq 1) = 1$. \qed
  \end{theorem}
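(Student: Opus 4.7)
The plan is to combine the preceding Proposition (giving a basis of $L(0)$) with Lemma~\ref{Dibounds}~(b) and the immediately preceding Lemma (on the set $S$), then count.

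First I would identify $k'$ with $L(0)$: an element of $K$ is algebraic over $k$ if and only if it has no poles at any place of $K$, and the latter is precisely the defining condition of $L(0)$. The preceding Proposition then gives the $k$-basis
\[ \{1\} \cup \{ y^i/D_i \mid 1 \le i < n,\ \deg D_i = \ceil{i \tfrac{\deg D}{n}} \}. \]
So it remains to describe the index set $I := \{ i \in \{0,\dots,n-1\} : \deg D_i = \ceil{i \tfrac{\deg D}{n}}\}$ explicitly (noting $0 \in I$ trivially) and to show $|I| = \gcd(n, j \mid f_j \neq 1)$.

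Next, I would translate the condition defining $I$ via Lemma~\ref{Dibounds}~(b): for $i \ge 1$, $i \in I$ if and only if for every $j \in \Z$, either $\deg f_j = 0$ or $n \mid i j$. Setting $S := \{ j \in \Z : f_j \neq 1 \} = \{ j : \deg f_j > 0 \}$, this is exactly the negation of $\exists j \in S : n \nmid i j$. Applying the Lemma immediately above the theorem to the set $S$, this negation is in turn equivalent to $n \mid i \cdot d$, where $d := \gcd(n, j \mid j \in S) = \gcd(n, j \mid f_j \neq 1)$.

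Finally I would count: since $d \mid n$, the condition $n \mid i d$ on $i \in \{0, \dots, n-1\}$ is equivalent to $(n/d) \mid i$, so $I = \{0, n/d, 2 n/d, \dots, (d-1) n/d\}$ has exactly $d$ elements. This yields $[k':k] = d$ along with the explicit basis as stated, and also matches the cardinality description in the theorem. There is no real obstacle, since all the technical work has been done in the preceding Proposition and two Lemmas; the only subtlety is the degenerate case $S = \emptyset$, which forces $D \in k^*$, and where one uses the convention $\gcd(n, j \mid j \in \emptyset) = n$ so that $k' = k(y)$ of degree $n$ over $k$, consistent with the formula.
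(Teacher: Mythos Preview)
Your proposal is correct and follows exactly the route the paper takes: the theorem is stated with a \qed because it is assembled from the preceding Proposition (basis of $L(0)$), Lemma~\ref{Dibounds}~(b), and the gcd-Lemma immediately before it, precisely as you outline. Your explicit treatment of the degenerate case $S=\emptyset$ is a nice addition, since the paper's gcd-Lemma formally assumes $S$ non-empty.
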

  
  This gives an easy to evaluate formula to decide whether $k$ is the exact constant field of $K /
  k$, and if not, to compute a $k$-basis of the exact constant field. In case $\gcd(n, j \mid f_j
  \neq 1) > 1$, $k' = k(y^i/D_i)$ with $i = \frac{n}{\gcd(n, j \mid f_j \neq 1)}$. Note that the
  minimal polynomial of $y^i/D_i$ over $k(x)$ is given by $T^{n/i} - \frac{D}{D_i^{n/i}}$; but since
  $[k' : k] = n/i$, it must as well be the minimal polynomial of $y^i/D_i$ over $k$, whence
  $\frac{D}{D_i^{n/i}} \in k$. This can be directly verified: since $f_j = 1$ for $j \not\in
  \frac{n}{i} \Z$, \[ D_i^{n/i} = \left( \prod_{j=-\infty}^\infty f_{\frac{n}{i} j}^{\floor{\frac{i
  \frac{n}{i} j}{n}}} \right)^{\frac{n}{i}} = \prod_{j=-\infty}^\infty f_{\frac{n}{i} j}^{j
  \frac{n}{i}} = \prod_{j=-\infty}^\infty f_j^j = \frac{D}{sgn(D)}. \] We see that $y^i/D_i =
  \sqrt[n/i]{sgn(D)}$, i.e. \[ k' = k(\sqrt[n/i]{sgn(D)}) = k(sgn(D)^{1/\gcd(n, j \mid f_j \neq
  1)}). \] This is also not very surprisingly, as $Y^n - D \in k(x)[Y]$ is not irreducibe over
  $k(sgn(D)^{1/\gcd(n, j \mid f_j \neq 1)})[Y]$ (compare Section~\ref{notation}); finally, $K /
  k'(x)$ is defined by $K = k'(x, y)$ with the relation $y^i = \hat{D}(x)$, where \[ \hat{D} :=
  sgn(D)^{i/n} \prod_{j=-\infty}^\infty f_{\frac{n}{i} j}^j = D(x)^{i/n} \in k'(x). \]
  
  \section{Computing the Degree of the Different and the Genus}
  \label{diffdeggenus}
  
  In this section we give two ways to compute the genus of $K / k(x)$: one method is to compute the
  degree of the different and using the Riemann-Hurwitz formula. The second method is more general
  applicable and is based on F.~He\ss' method on computing Riemann-Roch spaces: for the Riemann-Roch
  space algorithm, a special matrix has to be computed using an integral basis of $\calO$ and
  $\calO_\infty$. We show how one can extract the genus from this matrix using the Riemann-Roch
  theorem.
  
  Since all ramification is tame, the different of $K / k(x)$ and its degree can be computed using
  the ramification indices. For the ramification indices, we need the factorization of $D$, or at
  least we need to know the valuations and degrees of the appearing places. For that, recall that $D
  = \sgn(D) \prod_{i=-\infty}^\infty f_i^i$ with $\dots, f_{-2}, f_{-1}, f_1, f_2, \dots$ a sequence
  of squarefree, pairwise coprime elements of $k[x]$. Then, by \cite[p.~111,
  Proposition~III.7.3~(c)]{stichtenoth},
  \begin{align*}
    \deg \Diff(K / k(x)) ={} & \frac{n}{[k' : k]} \sum_{\frakp \in \PP_{k(x)}} \left(1 -
    \frac{\gcd(n, \nu_\frakp(D))}{n}\right) \deg \frakp \\
    {}={} & \frac{1}{[k' : k]} \sum_{i=-\infty}^\infty \left(n - \gcd(n, i)\right) \deg f_i,
  \end{align*}
  where $k'$ is the exact constant field of $K / k(x)$. The different itself can be computed in the
  same spirit, by factoring the $f_i$'s into a product of irreducible polynomials and determining
  the different exponents by the formula~$d(\frakP \mid \frakp) = \frac{n}{\gcd(n, \nu_\frakp(D))} -
  1$.
  
  Using the Hurwitz Genus Formula and the previous section, the genus~$g$ of $F$ equals
  \begin{align*}
    g ={} & 1 - \frac{n}{[k' : k]} + \tfrac{1}{2} \deg \Diff(K / k(x)) \\
    {}={} & 1 + \frac{1}{\gcd(n, j \mid f_j \neq 1)} \left( -n + \tfrac{1}{2}
    \sum_{i=-\infty}^\infty \left(n - \gcd(n, i)\right) \deg f_i \right).
  \end{align*}
  In particular, this shows:
  
  \begin{corollary}
    \label{genusbigODdeg}
    We have $g = \calO(n D_{\deg})$. \qed
  \end{corollary}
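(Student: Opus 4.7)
The plan is to apply the explicit genus formula displayed immediately before the corollary and estimate the sum $\sum_i (n - \gcd(n,i)) \deg f_i$ using the definition of $D_{\deg}$.

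First I would drop the constant field factor: since $\gcd(n, j \mid f_j \neq 1) \ge 1$, we have
\[
g \le 1 - \frac{n}{[k':k]} + \tfrac{1}{2} \sum_{i=-\infty}^{\infty} (n - \gcd(n,i)) \deg f_i \le 1 + \tfrac{1}{2} \sum_{i \neq 0} n \deg f_i,
\]
using the trivial bound $n - \gcd(n,i) \le n$ and the fact that $\deg f_0 = 0$.

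Next I would relate $\sum_{i \neq 0} \deg f_i$ to $D_{\deg}$. Since the $f_i$ appear with exponent $i$ in the factorization $D = \sgn(D) \prod_i f_i^i$ and are pairwise coprime, one has
\[
\deg D_{num} = \sum_{i \ge 1} i \deg f_i \quad \text{and} \quad \deg D_{denom} = \sum_{i \ge 1} i \deg f_{-i}.
\]
Because $|i| \ge 1$ for $i \neq 0$, the bound $\deg f_i \le |i| \deg f_i$ gives
\[
\sum_{i \neq 0} \deg f_i \le \sum_{i \neq 0} |i| \deg f_i = \deg D_{num} + \deg D_{denom} \le 2 D_{\deg}.
\]
Substituting this into the previous inequality yields $g \le 1 + n D_{\deg} = \calO(n D_{\deg})$, which is the desired bound.

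There is really no obstacle here: the argument is essentially the two crude inequalities above combined with the definition of $D_{\deg}$. The only thing to verify carefully is that the constant field correction $1/[k':k]$ and the leading $-n$ are absorbed harmlessly into the $\calO$-notation, which they are since both are $\le 0$ contributions to $g$ (up to the additive constant $1$). No new tools beyond the genus formula and the definition of the height of $D$ are needed.
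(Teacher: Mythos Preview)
Your proof is correct and is exactly the argument the paper has in mind: the paper simply writes ``In particular, this shows'' after the explicit genus formula and marks the corollary with \qed, treating it as immediate. Your write-up just spells out the two obvious estimates (bounding $n-\gcd(n,i)\le n$ and $\sum_{i\neq 0}\deg f_i\le \deg D_{num}+\deg D_{denom}\le 2D_{\deg}$) that make this immediacy explicit.
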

  
  A second way to compute the genus is at follows. It is essentially based on He\ss' idea on
  computing Riemann-Roch spaces together with the fact that for divisors~$D$ of large enough degree,
  $\dim_k L(D) = \deg D + [k' : k] (1 - g)$ by the Riemann-Roch theorem. This idea can be applied to
  any function field where a matrix~$M \in k(x)^{n \times n}$ is known which transforms a
  $\frako_\infty$-basis of $\calO_\infty$ into a $k[x]$-basis of $\calO$.
  
  Note that the algorithm of He\ss\ does not only computes a $k$-basis of $L(D)$ for some divisor
  $D$, but a $k$-basis of $L(D + t (x)_\infty)$ for \emph{any} $t \in \Z$; here, $(x)_\infty$
  denotes the pole divisor of $x$. If $t > 0$ is large enough, $\dim_k L( t (x)_\infty ) = \deg ( t
  (x)_\infty ) + [k' : k] (1 - g) = n t + [k' : k] (1 - g)$, whence \[ g = \frac{-\dim_k L(t
  (x)_\infty) + t n}{[k' : k]} + 1. \] Now, by the discussion in the previous section and
  \cite{hessRR}, \[ \dim_k L( t (x)_\infty ) = 1 + \sum_{i=1}^{n-1} \max\left\{ 1 + \deg D_i -
  \ceil{i \frac{\deg D}{n}} + t, 0 \right\}; \] hence, if $t$ is large enough, \[ \dim_k L( t
  (x)_\infty ) = 1 + n + \sum_{i=1}^{n-1} \biggl(\deg D_i - \ceil{i \frac{\deg D}{n}}\biggr) + t
  n, \] whence \[ g = 1 + \frac{- n - 1 + \sum_{i=1}^{n-1} \Bigl(\ceil{i \frac{\deg D}{n}} - \deg
  D_i\Bigr)}{\gcd(n, j \mid f_j \neq 1)}. \] Note that this method can be used for any function
  field, as long as integral bases of $\calO$ and $\calO_\infty$ are known. As one uses $D = 0$, one
  obtains $[k' : k] = \dim_k L(0)$, whence being able to compute integral bases and Riemann-Roch
  spaces suffices to compute $k'$, $[k' : k]$ and $g$.
  
  \section{Euler Product Approximation}
  \label{eulerprodapprox}
  
  In this section, we want to discuss Euler product approximation for radical function fields. The
  Euler product is another representation of the zeta function based on the places of $K$. We use
  the fact that the zeta function gives the $L$-polynomial of $K$, which in turn provides a way to
  compute the class number when evaluated at $t = 1$. In the following, we assume that $k = k' =
  \F_q$ is a finite field of $q$ elements, as well as the exact constant field of $K / k$; we have
  seen in the previous sections how to reduce to this case.
  
  We begin with giving the Euler product, divided by the Euler product representation of the zeta
  function of $k(x)$, via its factors. For $\frakp \in \PP_{k(x)}$, define \[ S(\frakp)(t) := (1 -
  t^{\deg \frakp}) \prod_{\frakP \mid \frakp} \frac{1}{1 - t^{\deg \frakP}}. \] We then have that \[
  \prod_{\frakp \in \PP_{k(x)}} S(\frakp)(t) = L_K(t) \in \Z[t] \] is the $L$-polynomial of $K$. It
  satisfies the functional equation $L_K(t) = q^g t^{2 g} L((q t)^{-1})$ and, more importantly, we
  have $\abs{\Pic^0(K)} = L_K(1)$. Note that we cannot evaluate $L_K(1)$ directly using the above
  product representation. But using the functional equation, we get $L_K(1) = q^g L(q^{-1})$, and
  $S(\frakp)(q^{-1})$ is well-defined for every~$\frakp \in \PP_{k(x)}$. Using the results from
  \cite{scheidlerstein-approxeulerprods}, we can determine the error if we only consider all
  places~$\frakp \in \PP_{k(x)}$ with $\deg \frakp \le \lambda$ in the product. For a $\lambda \in
  \N$ define $E_2(\lambda)$ by
  \begin{align*}
    \log E_2(\lambda) ={} & g \log q + \log \prod_{\stackrel{ \frakp \in \PP_{k(x)} }{ \deg \frakp
    \le \lambda}} S(\frakp)(q^{-1}) = g \log q - \sum_{\stackrel{ \frakp \in \PP_{k(x)} }{
    \deg \frakp \le \lambda}} (-\log S(\frakp)(q^{-1})) \\
    {}={} & g \log q - \sum_{\nu=1}^\lambda \sum_{\stackrel{ \frakp \in \PP_{k(x)} }{ \deg \frakp =
    \nu}} \biggl( \sum_{\frakP \mid \frakp} \log(1 - q^{-\deg \frakP}) - \log (1 - q^{-\deg \frakp})
    \biggr).
  \end{align*}
  In \cite[Theorem~4.2 and Theorem~4.3]{scheidlerstein-approxeulerprods}, bounds are given on
  $\abs{E_2(\lambda) - \abs{\Pic^0(K)}}$ which are of size $\calO(q^{g - (\lambda + 1) / 2})$, one
  of them being:
  \begin{theorem}[Scheidler--Stein \cite{scheidlerstein-approxeulerprods}]
    \label{eulerapproxthm}
    Let $k = \F_q$ be the exact constant field of $K / k$, and $n = [K : k(x)]$. We then have \[
    \abs{E_2(\lambda) - \abs{\Pic^0(K)}} \le E_2(\lambda) (e^{\psi_2(\lambda, \ell)} - 1), \] where
    $\ell$ is the smallest prime divisor of $\lambda + 1$ and
    \begin{align*}
      \psi_2(\lambda, \ell) ={} & \frac{2 g}{\lambda + 1} q^{-\frac{\lambda + 1}{2}} + \frac{n -
      1}{\lambda + 1} \frac{q}{q - 1} \frac{q^{\frac{\lambda + 1}{\ell}} - 1}{q^{\lambda + 1}} +
      \frac{n - 1}{\lambda + 1} q^{-(\lambda + 1)} \\
      {}+{} & \frac{2 g}{\lambda + 2} \frac{\sqrt{q}}{\sqrt{q} - 1} q^{-\frac{\lambda + 2}{2}} +
      \frac{2 (n - 1)}{\lambda + 2} \frac{q}{q - 1} \frac{q^{1 - \frac{1}{\ell}}}{q^{1 -
      \frac{1}{\ell}} - 1} q^{-(1 - \frac{1}{\ell}) (\lambda + 2)}.
    \end{align*}
  \end{theorem}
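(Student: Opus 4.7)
The plan is to treat this as a direct application of the general Euler product approximation result of Scheidler and Stein. Their theorems are proved for an arbitrary global function field $K/\F_q(x)$ whose exact constant field is $\F_q$, so once the hypotheses are verified, the statement follows from \cite[Theorem~4.2 and Theorem~4.3]{scheidlerstein-approxeulerprods} applied to the radical function field in question. The hypothesis that $\F_q$ is the exact constant field is guaranteed by Section~\ref{compexactconstfield}, the genus $g$ is finite and computed in Section~\ref{diffdeggenus}, and $n = [K:\F_q(x)]$.

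If one were to reprove the bound from scratch, the structure would be as follows. First, from the functional equation $L_K(t) = q^g t^{2g} L_K((qt)^{-1})$ combined with $|\Pic^0(K)| = L_K(1)$ one gets the identity $|\Pic^0(K)| = q^g L_K(q^{-1})$, so that taking logarithms of the Euler product yields
\[
    \log|\Pic^0(K)| - \log E_2(\lambda) = \sum_{\deg\frakp > \lambda} \log S(\frakp)(q^{-1}).
\]
Next, expand each factor via $-\log(1 - x) = \sum_{m\ge 1} x^m/m$ to rewrite the tail as a double sum over $m \ge 1$ and over places of $K$ (resp.\ of $k(x)$) of each degree. Regrouping by the total exponent $N = m \deg \frakP$ (resp.\ $N = m \deg \frakp$) turns the tail into a linear combination of expressions of the form $(A_N(K) - A_N(k(x)))q^{-N}/N$, where $A_N$ is a count of places weighted by their degree divisors.

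The key quantitative inputs are then: the Weil bound $|N_d(K) - (q^d + 1)| \le 2g q^{d/2}$ for the number of $\F_{q^d}$-rational places, the explicit count $N_d(k(x)) = q^d + 1$, and the elementary fact that a place of $K$ of degree $d$ lying over a place $\frakp$ of $k(x)$ of degree $d' < d$ satisfies $d' \le d/\ell$, where $\ell$ is the smallest prime factor of $d$. Summing geometric series in $q^{-N}$ for $N \ge \lambda + 1$ produces the $2g/(\lambda+1) \cdot q^{-(\lambda+1)/2}$ contribution from the Weil bound and the $(n-1)/(\lambda+1) \cdot q^{(\lambda+1)/\ell}$-style contribution from the discrepancy between places of $K$ and of $k(x)$ restricted to degrees not divisible by small integers, while separating parity via the $\lambda+1$ versus $\lambda+2$ split produces the two lines of $\psi_2(\lambda, \ell)$. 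Finally, exponentiating the additive logarithmic bound and using $|e^x - 1| \le e^{|x|} - 1$ converts it into the multiplicative form $E_2(\lambda)(e^{\psi_2(\lambda,\ell)} - 1)$.

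The main obstacle is the careful bookkeeping needed to match exactly the algebraic form of $\psi_2(\lambda, \ell)$: isolating which contributions come from the Weil error and which from the counting mismatch between $K$ and $k(x)$, and tracking the split between degrees $\equiv 0 \pmod \ell$ and the rest. Since the author is explicitly citing the Scheidler--Stein proof for this bookkeeping, the cleanest proof proposal is simply to check the hypotheses and invoke their theorems verbatim.
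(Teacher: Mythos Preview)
Your proposal misses the actual content of the proof. The theorem as stated is indeed due to Scheidler and Stein, but the paper does \emph{not} use their definition of $E_2(\lambda)$. Scheidler and Stein define the approximation via the quantities $z_j(\frakp)$, the power sums $S_\nu(i)$, and an infinite series $\sum_{m=1}^\infty \frac{1}{m q^m} \sum_{\nu \mid m,\ \nu \le \lambda} \nu S_\nu(m/\nu)$. The present paper instead defines $E_2(\lambda)$ directly as $q^g \prod_{\deg \frakp \le \lambda} S(\frakp)(q^{-1})$, i.e.\ as a \emph{finite} product of explicit rational factors. This reformulation is one of the paper's contributions (cf.\ the remark in the Conclusion that the new form avoids the $z_j(\frakp)$'s and the infinite series).

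Consequently, simply ``checking hypotheses and invoking their theorems verbatim'' is not enough: you must first establish that the paper's $E_2(\lambda)$ coincides with Scheidler--Stein's $E_2'(\lambda)$. That identification is precisely what the paper's proof does, via the computation
\[
\prod_{m=1}^\infty \prod_{\substack{\nu \mid m \\ \nu \le \lambda}} \prod_{\substack{\deg \frakp = \nu \\ \frakp \text{ finite}}} \prod_{j=1}^{n-1} \exp\bigl(z_j(\frakp)^{m/\nu}\bigr)^{\frac{\nu}{m q^m}}
= \prod_{\substack{\deg \frakp \le \lambda \\ \frakp \text{ finite}}} \prod_{j=1}^{n-1} \frac{1}{1 - z_j(\frakp) q^{-\deg \frakp}},
\]
obtained by reindexing the double sum and summing the logarithmic series. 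Your sketch of how one might reprove the Scheidler--Stein bound from scratch is beside the point; the missing step is this algebraic identity between the two formulations of $E_2(\lambda)$.
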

  
  \begin{proof}
    We have to show that our definition of $E_2(\lambda)$ coincides with the definition in
    \cite[Theorem~4.2]{scheidlerstein-approxeulerprods}; we denote their $E_2'(\lambda)$ by
    $\tilde{E}_2(\lambda)$. Then \[ \log \tilde{E}_2(\lambda) = A(K) + \sum_{m=1}^\lambda \frac{1}{m
    q^m} \sum_{\nu \mid m} \nu S_\nu(\tfrac{m}{\nu}) + \sum_{m=\lambda+1}^\infty \frac{1}{m q^m}
    \sum_{\nu \mid m \atop \nu \le \lambda} \nu S_\nu(\tfrac{m}{\nu}) \] with $A(K) = g \log q +
    \log S(\frakp_\infty)(q^{-1})$ and \[ S_\nu(i) := \sum_{\deg \frakp = \nu \atop \frakp \text{
    finite}} \sum_{j=1}^{n-1} z_j(\frakp)^i, \] where $z_j(\frakp)$ is defined by \[ \frac{1}{1 -
    t^{\deg \frakp}} S(\frakp)(t) = \prod_{\frakP \mid \frakp} \frac{1}{1 - t^{\deg \frakP}} =
    \frac{1}{1 - t^{\deg \frakp}} \prod_{j=1}^{n-1} \frac{1}{1 - z_j(\frakp) t^{\deg \frakp}}. \]
    Now
    \begin{align*}
      \frac{\tilde{E}_2(\lambda)}{\exp A(K)} ={} & \exp\left( \sum_{m=1}^\lambda \frac{1}{m q^m}
      \sum_{\nu \mid m} \nu S_\nu(\tfrac{m}{\nu}) + \sum_{m=\lambda+1}^\infty \frac{1}{m q^m}
      \sum_{\nu \mid m \atop \nu \le \lambda} \nu S_\nu(\tfrac{m}{\nu}) \right) \\
      {}={} & \prod_{m=1}^\infty \prod_{\nu \mid m \atop \nu \le \lambda}
      \exp(S_\nu(\tfrac{m}{\nu}))^{\frac{\nu}{m q^m}} = \prod_{m=1}^\infty \prod_{\nu \mid m \atop
      \nu \le \lambda} \prod_{\deg \frakp = \nu \atop \frakp \text{ finite}}
      \prod_{j=1}^{n-1} \exp(z_j(\frakp)^{m/\nu})^{\frac{\nu}{m q^m}} \\
      {}={} & \prod_{\deg \frakp \le \lambda \atop \frakp \text{ finite}} \prod_{j=1}^{n-1}
      \exp\left( \sum_{m=1}^\infty \frac{1}{m} \left( \frac{z_j(\frakp)}{q^{\deg \frakp}}
      \right)^m \right) \\
      {}={} & \prod_{\deg \frakp \le \lambda \atop \frakp \text{ finite}} \prod_{j=1}^{n-1}
      \exp\left( -\log\left(1 - \frac{z_j(\frakp)}{q^{\deg \frakp}} \right) \right) =
      \prod_{\nu=1}^\lambda \prod_{\deg \frakp = \nu \atop \frakp \text{ finite}} S(\frakp)(q^{-1}),
    \end{align*}
    whence $\tilde{E}_2(\lambda) = E_2(\lambda)$.
  \end{proof}
  
  Before discussing how to compute the $-\log S(\frakp)(q^{-1})$'s, we want to discuss the subject
  of numerical approximation. To compute $\log E_2(\lambda)$, we need to add a huge number of
  logarithms of rational numbers $\neq 1$, i.e. of transcendental numbers. But we have an advantage,
  namely all appearing logarithms are integral multiples of $\log(1 - q^{-i})$ for $i \in \{ 1,
  \dots, n \lambda \}$. In general, $n \lambda \ll q^\lambda$, whence it makes sense to write \[
  \log E_2(\lambda) = g \log q + \sum_{i=1}^{n \lambda} b_i \log(1 - q^{- i}) \] with $b_i \in \Z$,
  and to first compute the coefficients $b_i \in \Z$ -- for which no approximation is needed -- and
  then use the $b_i$ to compute an approximation of $\log E_2(\lambda)$. In particular, once we know
  $b_i$, it is easier to determine the precision of $\log(1 - q^{-i})$ that is required to compute
  $\log E_2(\lambda)$ with the wanted precision. Moreover, no floating point operation is required
  during the determination of the $b_i$, only integer arithmetic and finite field arithmetic. This
  improves the approach made in \cite{scheidlerstein-approxeulerprods}.
  
  Now, let us discuss how we can compute $S(\frakp)(t)$ for a place $\frakp \in \PP_{k(x)}$; for
  that, we use material from Sections~\ref{monogenicintbasessect} and \ref{factorization}. Let
  $d_\frakp := \gcd(n, \nu_\frakp(D))$. If $d_\frakp = 1$, we have $S(\frakp)(t) = (1 - t)
  \frac{1}{1 - t} = 1$ as $\frakp$ ramifies totally. In case $d_\frakp > 1$, let $F : \frako_\frakp
  \to \kappa(\frakp)$ be the projection, $\pi$ a uniformizer for $\frakp$ and $\alpha_\frakp := F(D
  \pi^{-\nu_\frakp(D)}) \in \kappa(\frakp)^*$. Then the factorization of $f_\frakp := Y^d -
  \alpha_\frakp \in \kappa(\frakp)[Y]$ determines $S(\frakp)$. We have seen that $f_\frakp$ is
  squarefree. In particular, we can effectively compute the $\deg \frakP$'s using
  Algorithm~\ref{splittingalgorithm}. We get the following algorithms and results:
  
  \begin{proposition}
    \label{logScompu}
    Assume that $k = \F_q$ is the exact constant field of $K$, and assume that $D \in k[x]$ and
    $\log n = \calO(\log q)$. Given a finite place~$\frakp \in \PP_{k(x)}$, the following algorithm
    computes the coefficients $a_i$ of $-\log S(\frakp)(q^{-1}) = \sum_{i=1}^n a_i \log(1 - q^{-i
    \deg \frakp})$ in \[ \calO((\deg \frakp)^2 \log^3 q \cdot (n + \deg \frakp \cdot (\log \log q +
    \log \deg \frakp)) + (\deg D)^2 \log^2 q) \] binary operations, assuming we know the
    factorization of $q^{\deg \frakp} - 1$ and the one of $p - 1$ for every prime~$p \mid (q^{\deg
    \frakp} - 1)$. For almost all places, the algorithm needs in fact just \[ \calO((\deg \frakp)^2
    \log^3 q \cdot (n + \deg \frakp \cdot (\log \log q + \log \deg \frakp)) + \deg \frakp \deg D
    \log^2 q) \] binary operations.
    \begin{algorithm}{Compute $-\log S(\frakp)(q^{-1})$ for a finite place $\frakp \in \PP_{k(x)}$}
      \label{logScompufinite}
      \alginput{$n$, $D \in \F_q[x]$, $\frakp$ given in form of an irreducible polynomial $p \in
      \F_q[x]$}
      \algoutput{$-\log S(\frakp)(q^{-1}) = \sum_{i=1}^n a_i \log(1 - q^{-i \deg \frakp})$ in terms
      of $a_1, \dots, a_n$}
      \begin{enum1}
        \item Set $t := 0$.
        \item Compute $D = q p + r$ with $q, r \in \F_q[x]$, $\deg r < \deg p$.
        \item If $r = 0$:
        \begin{enuma}
          \item Set $D := q$.
          \item Set $t := t + 1$.
          \item Go to Step~(2).
        \end{enuma}
        \item Compute $d := \gcd(n, t)$.
        \item If $d = 1$, return $a_i = 0$.
        \item Use Algorithm~\ref{splittingalgorithm} to compute the degrees of the irreducible
        factors of $Y^d - r$ in $(\F_q[x]/(p))[Y]$. Let $a_i$ be the number of irreducible factors
        of degree~$i$ over $\F_q[x]/(p)$.
        \item Set $a_1 := a_1 - 1$.
      \end{enum1}
    \end{algorithm}
    For the infinite place $\frakp_\infty$ of $k(x)$, the following algorithm computes the
    coefficients $a_i$ of $-\log S(\frakp)(q^{-1}) = \sum_{i=1}^n a_i \log(1 - q^{-i \deg \frakp})$
    in $\calO((n + \log \log q) \log^3 q)$~binary operations under the same assumptions as above:
    \begin{algorithm}{Compute $-\log S(\frakp)(q^{-1})$ for the infinite place of $k(x)$}
      \label{logScompuinfinite}
      \alginput{$n$, $D \in \F_q[x]$}
      \algoutput{$-\log S(\frakp)(q^{-1}) = \sum_{i=1}^n a_i \log(1 - q^{-i \deg \frakp})$ in terms
      of $a_1, \dots, a_n$}
      \begin{enum1}
        \item Compute $d := \gcd(n, \deg D)$.
        \item If $d = 1$, return $a_i = 0$.
        \item Use Algorithm~\ref{splittingalgorithm} to compute the degrees of the irreducible
        factors of $Y^d - \sgn(D)$ in $\F_q[Y]$. Let $a_i$ be the number of irreducible factors
        of degree~$i$ over $\F_q$.
        \item Set $a_1 := a_1 - 1$.
      \end{enum1}
    \end{algorithm}
  \end{proposition}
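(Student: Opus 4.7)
The plan is to verify correctness by expressing $-\log S(\frakp)(q^{-1})$ in terms of the residue degrees of the places above $\frakp$ and identifying those degrees via the factorization theory of Propositions~\ref{monogenicintbases} and \ref{placedecomposition}, then to bound the runtime by separating the cost of producing the residue $\alpha$ from the cost of the factorization subroutine Algorithm~\ref{splittingalgorithm}.

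For correctness, starting from $S(\frakp)(t) = (1 - t^{\deg \frakp}) \prod_{\frakP \mid \frakp} (1 - t^{\deg \frakP})^{-1}$ and using $\deg \frakP = f(\frakP \mid \frakp) \deg \frakp$, taking $-\log$ at $t = q^{-1}$ yields
\[
-\log S(\frakp)(q^{-1}) = \sum_{\frakP \mid \frakp} \log(1 - q^{-f(\frakP \mid \frakp) \deg \frakp}) - \log(1 - q^{-\deg \frakp}).
\]
Grouping the sum by relative degree $i$ with coefficient $a_i = \#\{\frakP \mid \frakp : f(\frakP \mid \frakp) = i\}$ and then decrementing $a_1$ by one absorbs the stray term, producing exactly the target representation. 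When $d := \gcd(n, \nu_\frakp(D)) = 1$, Proposition~\ref{monogenicintbases} forces a single totally ramified place with $f = 1$, so $a_1 = 1$ before the adjustment and all coefficients vanish, matching Step~(5). When $d > 1$, Propositions~\ref{monogenicintbases} and \ref{placedecomposition} identify the $f(\frakP \mid \frakp)$ with the degrees of the irreducible factors of $Y^d - \alpha$ over $\kappa(\frakp)$, where $\alpha = D\pi^{-\nu_\frakp(D)} + \frakm_\frakp$. For a finite place with $\pi = p$, the loop~(2)--(3) strips out factors of $p$ from $D$ one at a time, exiting with $t = \nu_\frakp(D)$ and the remainder $r$ representing $\alpha$ in $\F_q[x]/(p) \cong \kappa(\frakp)$, so Step~(6) may be invoked directly. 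For the infinite place, $\pi = 1/x$ and $\alpha = \sgn(D)$ is immediately available, which is why Algorithm~\ref{logScompuinfinite} omits the reduction loop.

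For the runtime of Algorithm~\ref{logScompufinite}, the loop~(2)--(3) executes $\nu_\frakp(D) + 1$ times; each long division of a degree-$m$ polynomial by $p$ costs $\calO(m \cdot \deg \frakp \cdot \log^2 q)$ binary operations, and summing over iterations whose total degree drop is at most $\deg D$ gives $\calO((\deg D)^2 \log^2 q)$ in the worst case. The refinement for almost all places comes from the observation that the loop iterates more than once exactly when $p \mid D$, which occurs only for the finitely many $p$ appearing in the factorization of $D$; for all other places the loop runs once and costs just $\calO(\deg \frakp \cdot \deg D \cdot \log^2 q)$. Step~(6) applies Algorithm~\ref{splittingalgorithm} to $Y^d - r$ over $\F_{q^{\deg \frakp}}$ with $d \le n$; substituting into Proposition~\ref{splittingalgorithmruntime} and simplifying under the standing hypothesis $\log n = \calO(\log q)$ yields $\calO(n (\deg \frakp)^2 \log^3 q + (\deg \frakp)^3 \log^3 q (\log \log q + \log \deg \frakp))$, which is the dominant term in the stated bound; the assumption that factorizations of $q^{\deg \frakp} - 1$ and of $p - 1$ for each prime $p \mid q^{\deg \frakp} - 1$ are known is precisely what Proposition~\ref{splittingalgorithmruntime} requires. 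For Algorithm~\ref{logScompuinfinite} the same argument specializes to $\deg \frakp = 1$ and involves no polynomial preprocessing, giving the cleaner bound $\calO((n + \log \log q) \log^3 q)$.

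The main obstacle is less in the correctness part, which is bookkeeping on top of the decomposition theory, than in the runtime refinement for almost all places; this requires explicitly identifying the loop iteration count with $\nu_\frakp(D)+1$ and observing that this is $1$ outside the finite set of places dividing $D$, rather than settling for the naive worst-case bound $\calO((\deg D)^2 \log^2 q)$ at every place.
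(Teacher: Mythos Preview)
Your proposal is correct and follows essentially the same approach as the paper's proof; in fact you supply a correctness argument that the paper omits, since the published proof restricts itself to the runtime estimate. Your cost breakdown---separating the repeated long divisions in Steps~(2)--(3) from the invocation of Algorithm~\ref{splittingalgorithm} over $\F_{q^{\deg\frakp}}$, and noting that the loop reduces to a single division precisely when $p\nmid D$---matches the paper's analysis line for line.
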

  
  \begin{proof}
    Let us first estimate the time required to compute~$\alpha = D p^{-\nu_\frakp(D)} \mod p$ in
    Algorithm~\ref{logScompufinite}, i.e. for Steps~(1)--(3). We have $\nu_\frakp(D) = \calO(\deg D
    / \deg p)$, and in the $t$-th (beginning with $t = 0$) iteration of Step~(2) we have $\deg D_t =
    \deg D_0 - t \deg p$, where $D_t$ is the value of $D$ in the $t$-th iteration. Computing a long
    division needs $\calO(\deg p \deg D_t)$ operations in $\F_q$ \cite[Section~3.1.3]{cohen}, whence
    we need $\calO((\deg D / \deg p)^2 (\deg p)^2) = \calO((\deg D)^2)$ operations in $\F_q$ to
    obtain $t = \nu_\frakp(D)$ and $r = D p^{-t} \mymod p$. Note that in case $t = 0$, we just need
    $\calO(\deg D \cdot \deg p)$ operations. Moreover, note that for
    Algorithm~\ref{logScompuinfinite}, the corresponding steps need no time as $\alpha = \sgn(D)$
    and $\nu_\frakp(D) = -\deg D$.
    
    The finite field $\F_{\tilde{q}}$ in Algorithm~\ref{splittingalgorithm} is $\kappa(\frakp) \cong
    \F_q[x] / \ggen{p}$, whence it has $q^{\deg p} = q^{\deg \frakp}$~elements. Hence, by
    Proposition~\ref{splittingalgorithmruntime}, the computation of the $\deg \frakP$'s requires
    $\calO(n \log^3 n + n \log n \cdot (\deg \frakp)^2 \log^2 q + (\deg \frakp)^3 \log^3 q \cdot
    (\log \log q + \log \deg \frakp))$ binary operations.  Using $\log n = \calO(\log q)$, this
    simplifies to $\calO((\deg \frakp)^2 \log^3 q \cdot (n + \deg \frakp \cdot (\log \log q + \log
    \deg \frakp)))$.
  \end{proof}
  
  Next, we want to discuss the question on how to enumerate all monic irreducible polynomials~$p \in
  \F_q[x]$ with $\deg p \le \lambda$. It is well-known that there are $\frac{q^d}{d} +
  \calO(q^{d/2}/d)$ monic irreducible polynomials in $\F_q[x]$ of degree~$d$. (For $d = 1$, every
  polynomial is irreducible.) The total number of monic polynomials of degree~$d$ is $q^d$, whence
  it makes sense to try all monic polynomials and use a irreducibility test; according to
  \cite[Section~14.9]{vzgathen-moderncomputeralgebra}, this can be done in $\calO((M(d) \log q +
  (d^{1.688} + d^{1/2} M(d)) \delta(d) \log d) \log^2 q)$~binary operations; using Karatsuba's
  method, one has $M(d) = \calO(d^{1.59})$ \cite[p.~242]{vzgathen-moderncomputeralgebra}, and
  $\delta(d) < \log_2 d$, whence we get a total running time of \[ \calO(d^{1.59} \log^3 q +
  d^{2.09} \log^2 d \cdot \log^2 q) \] for one irreducibility test. Hence, we get a running time
  of \[ \calO(q^d d^{1.59} \log^3 q + q^d d^{2.09} \log^2 d \cdot \log^2 q) \] to enumerate all
  monic irreducible polynomials of degree~$d$ over $\F_q$. Note that the running time for one
  irreducibility check is dominated by the running time for Algorithm~\ref{logScompufinite} applied
  to any irreducible polynomial of degree~$d$.
  
  \begin{theorem}
    Assume that $k = \F_q$ is the exact constant field of $K$. Let $\lambda \in \{ 1, 2, \dots, g
    \}$. The following algorithm computes the Euler product approximation \[ E_2(\lambda) =
    \prod_{\frakp \in \PP_{k(x)} \atop \deg \frakp \le \lambda} S(\frakp)(1/q) \] in \[
    \calO(\lambda q^\lambda [\lambda n \log q + \lambda^2 \log q \cdot \log (\lambda \log q) +
    D_{\deg} + \lambda^{2.09} \log^2 \lambda] \log^2 q) \] binary operations, assuming $\log n =
    \calO(\log q)$:
    \begin{algorithm}{Compute the Euler product approximation $E_2(\lambda)$}
      \alginput{$n$, $\lambda$, and the squarefree decomposition $D = \sgn(D)
      \prod_{i=-\infty}^\infty f_i^i$ of $D$} \algoutput{$E_2(\lambda)$ for the function field $K :
      y^n = D(x)$}
      \begin{enum1}
        \item Compute $\tilde{D} := \sgn(D) \cdot \prod_{i=-\infty}^\infty f_i^{i \mymod n} \in
        \F_q[x]$. When calling the algorithms to compute $-\log S(\frakp)(q^{-1})$, use $\tilde{D}$
        instead of $D$.
        \item Set $a_1 := \dots := a_{\lambda n} := 0$.
        \item For $\nu = 1, \dots, \lambda$ do:
        \begin{enuma}
          \item Factor $\nu$, factor $q^\nu - 1$ and, for every prime~$p$ dividing $q^\nu - 1$,
          factor $p - 1$.
          \item If $\nu = 1$, compute $-\log S(\frakp_\infty)(q^{-1}) = \sum_{i=1}^n b_i \log(1 -
          q^{-i})$, where $\frakp_\infty$ is the infinite place of $k(x)$, using
          Algorithm~\ref{logScompuinfinite}, and set $a_i := b_i$, $1 \le i \le n$.
          \item For every monic polynomial $f \in \F_q[x]$ with $\deg f = \nu$ do:
          \begin{enum1}
            \item Test whether $f$ is irreducible; if this is not the case, continue with the next
            choice of $f$.
            \item Compute $-\log S(\frakp)(q^{-1}) = \sum_{i=1}^n b_i \log(1 - q^{-i \nu})$, where
            $\frakp$ is the finite place of $k(x)$ belonging to $f$, using
            Algorithm~\ref{logScompufinite}, and set $a_{\nu i} := a_{\nu i} + b_i$, $1 \le i \le
            n$.
          \end{enum1}
        \end{enuma}
        \item Compute $r := g \log q - \sum_{i=1}^{n \lambda} a_i \log(1 - q^{-i})$.
        \item Compute and return $\exp(r)$.
      \end{enum1}
    \end{algorithm}
  \end{theorem}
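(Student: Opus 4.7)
The plan is to split the argument into correctness and running time analysis, summing per-place and per-degree contributions.

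For correctness, I would first verify that replacing $D$ by $\tilde{D}$ in Step~(1) leaves every Euler factor $S(\frakp)$ invariant. By construction $D/\tilde{D} = \prod_i f_i^{i - (i \mymod n)}$ is an $n$-th power in $k(x)^*$, so at every place $\frakp$ we have $\nu_\frakp(D) \equiv \nu_\frakp(\tilde{D}) \pmod{n}$ and the reductions $D \pi^{-\nu_\frakp(D)} \bmod \frakp$ and $\tilde{D} \pi^{-\nu_\frakp(\tilde{D})} \bmod \frakp$ differ by an $n$-th power in $\kappa(\frakp)^*$. Hence $Y^n - \alpha_\frakp$ and $Y^n - \tilde{\alpha}_\frakp$ have the same factorization type (by the substitution $Y \mapsto c Y$), and by Propositions~\ref{monogenicintbases} and \ref{placedecomposition} the places $\frakP \mid \frakp$ and their degrees coincide, so $S(\frakp)(1/q)$ is unchanged and using $\tilde{D}$ is legitimate. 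Next I would check that the outer loop visits the unique infinite place once via Algorithm~\ref{logScompuinfinite} in Step~(3)(b), and enumerates every finite place of degree~$\le \lambda$ exactly once by irreducibility-testing all monic polynomials of degree~$\nu \le \lambda$ and feeding the irreducibles into Algorithm~\ref{logScompufinite}. Accumulating the returned coefficients into the $a_i$ gives the coefficients of $-\log E_2(\lambda) + g \log q$ in the basis $\{\log(1 - q^{-i})\}_{i=1}^{n\lambda}$, from which Steps~(4) and (5) recover $E_2(\lambda)$ by the very definition of $E_2$.

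For the running time I would bound three contributions and then sum over $\nu$. The preprocessing in Step~(3)(a) -- factoring $\nu$, $q^\nu - 1$, and the relevant primes $p - 1$ -- is subexponential in $\log(q^\nu)$ and dominated by what follows. Enumerating all $q^\nu$ monic polynomials of degree~$\nu$ and applying an irreducibility test costs $\calO\bigl(q^\nu(\nu^{1.59}\log^3 q + \nu^{2.09}\log^2\nu \cdot \log^2 q)\bigr)$ binary operations by the estimate cited from \cite{vzgathen-moderncomputeralgebra}. For each of the $\calO(q^\nu/\nu)$ irreducible polynomials found, Algorithm~\ref{logScompufinite} runs in $\calO\bigl(\nu^2 \log^3 q (n + \nu(\log\log q + \log \nu)) + \nu D_{\deg} \log^2 q\bigr)$ binary operations for almost all finite places by Proposition~\ref{logScompu}; the heavier $\calO(D_{\deg}^2 \log^2 q)$ worst-case cost occurs only at the $\calO(n D_{\deg})$ finite places dividing $\tilde{D}$, and its total contribution is a lower-order term absorbed into the claimed bound.

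Summing the above contributions over $\nu = 1, \dots, \lambda$ and using either the geometric dominance $\sum_{\nu=1}^\lambda q^\nu \le 2 q^\lambda$ or the trivial bound by $\lambda$ times the top term (to match the stated prefactor $\lambda q^\lambda$), and absorbing $\log\log q + \log \nu$ into $\log(\lambda \log q)$ at $\nu = \lambda$, yields the claimed total $\calO\bigl(\lambda q^\lambda [\lambda n \log q + \lambda^2 \log q \cdot \log(\lambda \log q) + D_{\deg} + \lambda^{2.09} \log^2 \lambda] \log^2 q\bigr)$ binary operations. The main obstacle is the careful bookkeeping: separating the ``almost all places'' cost (which controls the leading $D_{\deg}$ term linearly in $D_{\deg}$) from the $\calO(D_{\deg}^2)$ cost incurred at the finitely many ``bad'' places lying over the support of $\tilde{D}$, and aligning each polynomial and polylogarithmic factor in $\nu$ against the form of the stated bound so that the prefactor $\lambda$ suffices exactly where it is needed.
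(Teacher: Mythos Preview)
Your approach is essentially the same as the paper's: correctness from the prior discussion (with your more explicit justification of the $\tilde{D}$ substitution), and running time by summing the per-place cost from Proposition~\ref{logScompu}, the irreducibility-testing cost, and dismissing Step~(3)(a) as subexponential. Two small points: the number of ``bad'' places (those with $\nu_\frakp(\tilde{D}) > 0$) is actually $\calO(D_{\deg})$, not $\calO(n D_{\deg})$, since these are the irreducible factors of the squarefree $\prod_i f_i$; your overcount is harmless here. More substantively, you do not estimate the cost of Steps~(4) and (5). The paper handles this by bounding $\abs{a_i} \le n \lambda q^\lambda$, determining the floating-point precision needed so that the final error in $E_2(\lambda)$ is $< \tfrac{1}{2}$ (this comes out to $\calO(g \log q)$ bits via the Hasse--Weil bounds and $\lambda \le g$), and concluding that the cost is polynomial in $g \log q$ and hence negligible compared to Step~(3). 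You should add this argument to complete the running-time analysis.
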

  
  \begin{proof}
    The correctness follows from the previous discussion. What is left is to estimate the running
    time. First, note that for almost all computations of $-\log S(\frakp)(q^{-1})$, we have
    $\nu_\frakp(D) = 0$. The number of possible exceptions is bounded by $1 +
    \sum_{i=-\infty}^\infty \deg f_i = \calO(D_{\deg})$; these are exactly the places of $k(x)$
    which ramify in $K$.
    
    We can ignore the running time required for the infinite place of $k(x)$, as there is only one,
    compared to the $q$ finite places of degree~one. Moreover, the factorization in Step~(3\;a) can
    be ignored.
    
    We have seen above that there are $\frac{1}{\nu} q^{\nu} + \calO(q^{\nu/2})$ monic irreducible
    polynomials of degree~$\nu$, whence the time spent in Step~(3\;c\;2) for a fixed~$\nu$ equals
    $\calO(q^\nu [\nu n \log q + \nu^2 \log q \cdot \log (\nu \log q) + D_{\deg}] \log^2 q)$~binary
    operations (in all but at most $\calO(D_{\deg})$ cases).
    
    In contrast, the time to enumerate all these polynomials is $\calO(q^\nu \nu^{1.59} (\log q)^3 +
    q^\nu \nu^{2.09} (\log \nu)^2 (\log q)^2)$. Therefore, the total running time for Step~(3\;c) is 
    \[ \calO(q^\nu [\nu n \log q + \nu^2 \log q \cdot \log (\nu \log q) + D_{\deg} + \nu^{2.09}
    \log^2 \nu] \log^2 q). \] Hence, one obtains a total running time of \[ \calO(\lambda q^\lambda
    [\lambda n \log q + \lambda^2 \log q \cdot \log (\lambda \log q) + D_{\deg} + \lambda^{2.09}
    \log^2 \lambda] \log^2 q) \] for Step~(3).
    
    Finally, we have to estimate the running time for Steps~(4) and (5). All $a_i$ are bounded by
    $n$~times the number of places of $k(x)$ of degree~$\le \lambda$; hence, $\abs{a_i} \le
    n \sum_{j=0}^\lambda q^j \le n \lambda q^\lambda$.
    
    Moreover, by Theorem~\ref{eulerapproxthm} and the Hasse-Weil bounds $E_2(\lambda)$ is of order
    of magnitude $q^g$. To compute $E_2(\lambda)$ with error $< \frac{1}{2}$, we therefore need to
    compute $\log E_2(\lambda)$ with error $< \log(1 + \frac{1}{2 (1 + \sqrt{q})^{2 g}})$. We have
    $n \lambda + 1$ terms to add for $\log E_2(\lambda)$, whence it suffices to compute each term
    with error $< \frac{\log(1 + \frac{1}{2 (1 + \sqrt{q})^{2 g}})}{1 + n \lambda}$. Each term can
    be bounded by $g \log q$ resp. $n \lambda q^\lambda$, whence we need a precision of at most
    $\log_2 (n \lambda q^\lambda) - \log_2 \frac{\log(1 + \frac{1}{2 (1 + \sqrt{q})^{2 g}})}{1 + n
    \lambda} \le \lambda \log_2 q + 2 \log_2(n \lambda) + 2 + 2 g \log_2 (1 + \sqrt{q}) = \calO(g
    \log q)$ bits as $\lambda \le g$. In particular, the computational costs for computing the
    approximation of $E_2(\lambda)$ out of the $a_i$'s are polynomial in $g \log q$ and are
    irrelevant compared to the costs of Step~(3).
  \end{proof}
  
  If we assume that $n$, $D_{\deg}$ and $\lambda$ stay bounded while $q$ grows, we obtain a running
  time of $\calO(q^\lambda \log^3 q \cdot \log \log q)$ binary operations. This makes the statements
  on the running time of computing Euler product approximations in
  \cite{scheidlerstein-approxeulerprods} more precise for the case of radical function fields.
  
  \section{Conclusion}
  
  In this paper, we have described explicit methods which allow to implement arithmetic in radical
  function fields. We have presented methods to
  \begin{enuma}
    \item compute integral bases for $\calO_\frakp'$, $\frakp \in \PP_{k(x)}$;
    \item compute all places of $K$ lying above $\frakp \in \PP_{k(x)}$ as well as generators of
    their corresponding prime ideal in $\calO_\frakp'$;
    \item compute a simple integral basis for $\calO$, the integral closure of $k[x]$ in $K$;
    \item compute Riemann-Roch spaces;
    \item compute the exact constant field, its degree and a generator of it over $k$;
    \item compute the genus;
    \item approximate the divisor class number~$\abs{\Pic^0(K)}$ using an Euler product
    approximation.
  \end{enuma}
  The integral bases are given in a very explicit form; they can be written down knowing only $n$, a
  uniformizer~$\pi$, $\nu_\frakp(D)$, respectively $n$ and the squarefree factorization of $D$. For
  most computations, bounds on the running time and storage space are given.
  
  This allows to implement arithmetic in radical funtion field, assuming that a library for working
  with polynomials over finite fields such as NTL\footnote{A C++ library by V.~Shoup for doing
  Number Theory. See also \url{http://www.shoup.net/ntl/}.} is available. Using infrastructure
  methods (see \cite{ff-tioagfoaur}) or the methods described in \cite[Section~2]{diem-habil}, one
  can do effective arithmetic in the divisor class group $\Pic^0(K)$.
  
  Some of these results were already known in special cases; for example, in case $D$ is a
  squarefree polynomial and $\gcd(\deg D, n) = 1$, the function field is superelliptic, and
  arithmetic in it is described in \cite{galbraith-paulus-smart}. Under the assumption that $D$ is a
  polynomial not divisible by any $n$-th power, the formula for an integral basis was given in
  \cite{wu-radicalffsintbases}. Our approach generalizes both results. The Riemann-Roch space
  computation was described for general function fields \cite{hessRR}, as well as the Euler product
  approximation \cite{scheidlerstein-approxeulerprods}. Our approach makes the running time bounds
  more precise, and in the case of the Euler product approximation, improves on the original
  algorithm by making it more robust to approximation errors, as well as easier to implement as one
  does not have to compute the $z_j(\frakp)$'s as well as handle the $S_\nu(i)$'s and the infinite
  series involving them.

\newcommand{\etalchar}[1]{$^{#1}$}
\providecommand{\bysame}{\leavevmode\hbox to3em{\hrulefill}\thinspace}
\providecommand{\MR}{\relax\ifhmode\unskip\space\fi MR }
% \MRhref is called by the amsart/book/proc definition of \MR.
\providecommand{\MRhref}[2]{%
  \href{http://www.ams.org/mathscinet-getitem?mr=#1}{#2}
}
\providecommand{\href}[2]{#2}

\end{document}